\theoremstyle{plain}
\newtheorem{theorem}{Theorem}
\newtheorem{lemma}[theorem]{Lemma}
\newtheorem{proposition}[theorem]{Proposition}
\newtheorem{corollary}[theorem]{Corollary}
\numberwithin{theorem}{section}
\numberwithin{equation}{theorem}
\theoremstyle{definition}
\newtheorem{definition}[theorem]{Definition}
\newtheorem{example}[theorem]{Example}
\newtheorem{remark}[theorem]{Remark}
\newtheorem{question}[theorem]{Question}
\newtheorem*{question*}{Question}
\DeclareMathOperator{\fpcy}{fpcy}
\DeclareMathOperator{\fpdim}{fpd}
\DeclareMathOperator{\fpc}{fpcx}
\DeclareMathOperator{\fpv}{fpv}
\DeclareMathOperator{\fpgldim}{fpgldim}
\newcommand{\fpk}{\textup{fp}\kappa}
\DeclareMathOperator{\fpg}{fpg}
\DeclareMathOperator{\Vect}{Vect}
\DeclareMathOperator{\End}{End}
\DeclareMathOperator{\Ext}{Ext}
\DeclareMathOperator{\Hom}{Hom}
\DeclareMathOperator{\GKdim}{GKdim}
\DeclareMathOperator{\gldim}{gldim}
\DeclareMathOperator{\Proj}{Proj}
\DeclareMathOperator{\gr}{gr}
\DeclareMathOperator{\Mod}{Mod}
\def\Gr{\operatorname{Gr}}
\begin{document}

\title[Frobenius-Perron theory]
{Frobenius-Perron theory for projective schemes}

\author{J.M.Chen,  Z.B. Gao,  E. Wicks, J. J. Zhang, X-.H. Zhang and H. Zhu}

\address{Chen: School of Mathematical Science,
Xiamen University, Xiamen, 361005, Fujian, China}

\email{chenjianmin@xmu.edu.cn}

\address{Gao: Department of Communication Engineering,
Xiamen University, Xiamen, 361005, Fujian, China}
\email{gaozhibin@xmu.edu.cn}

\address{Wicks: Department of Mathematics, Box 354350,
University of Washington, Seattle, Washington 98195, USA}

\email{elizabethlwicks@gmail.com}

\address{J.J. Zhang: Department of Mathematics, Box 354350,
University of Washington, Seattle, Washington 98195, USA}

\email{zhang@math.washington.edu}

\address{X.-H. Zhang: College of Sciences, Ningbo University of Technology, Ningb, 315211, 
Zhejiang, China}

\email{zhang-xiaohong@t.shu.edu.cn}

\address{Zhu: Department of Information Sciences, the School of Mathematics and Physics,
Changzhou University,
Changzhou, 213164, Jiangsu, China}

\email{zhuhongazhu@aliyun.com}

\begin{abstract}
The Frobenius-Perron theory of an endofunctor of a $\Bbbk$-linear 
category (recently introduced in \cite{CG}) provides new invariants 
for abelian and triangulated categories. Here we 
study Frobenius-Perron type invariants for derived categories 
of commutative and noncommutative projective schemes. In 
particular, we calculate the Frobenius-Perron dimension for 
domestic and tubular weighted projective lines, define 
Frobenius-Perron generalizations of Calabi-Yau and Kodaira 
dimensions, and provide examples. We apply this theory to 
the derived categories associated to certain Artin-Schelter 
regular and finite-dimensional algebras.
\end{abstract}

\subjclass[2000]{Primary 16E35, 16E65, 16E10, Secondary 16B50}


\keywords{Frobenius-Perron dimension, derived category, projective
scheme, weighted projective line, noncommutative projective scheme}


\maketitle


\setcounter{section}{-1}
\section{Introduction}
\label{xxsec0}

\bigskip

The Frobenius-Perron dimension of an endofunctor of a category was 
introduced by the authors in \cite{CG}. It can be viewed as a 
generalization of the Frobenius-Perron dimension of an object in
a fusion category introduced by Etingof-Nikshych-Ostrik \cite{ENO2005} 
in early 2000 (also see \cite{EGNO2015, EGO2004, Nik2004}). It is 
shown in \cite{CG} that the Frobenius-Perron dimension of 
either $\Ext^1$ or the suspension of a triangulated category is 
a useful invariant in several different topics such as 
embedding problem, Tame and wild dichotomy, complexity of 
categories. In particular, the Frobenius-Perron invariants have 
strong connections with the representation type of a category 
\cite{CG, ZZ}.

The definition of the Frobenius-Perron dimension of a category 
will be recalled in Section 2. In the present paper we continue 
to develop Frobenius-Perron theory, but we restrict our attention 
to the bounded derived category of coherent sheaves over a 
projective scheme. A projective scheme could be a classical (or 
commutative) one, or a noncommutative one in the sense of \cite{AZ}, 
or a weighted projective line in the sense of \cite{GL}. We refer 
to Section 3 for some basics concerning weighted projective lines. 

Our first goal is to understand the Frobenius-Perron dimension, 
denoted by $\fpdim$, of the bounded derived category of coherent 
sheaves over a weighted projective line, which is also helpful for 
understanding the Frobenius-Perron dimension of the path algebra 
of an acyclic quiver of $\widetilde{A}\widetilde{D}\widetilde{E}$ 
type via the derived equivalence given in Lemma \ref{xxlem2.1}(2). 
Let ${\mathbb X}$ be a weighted projective line (respectively, a 
commutative or noncommutative projective scheme). We use 
$coh({\mathbb X})$ to denote the abelian category of coherent 
sheaves over ${\mathbb X}$ and $D^b(coh({\mathbb X}))$ to denote 
the bounded derived category of $coh({\mathbb X})$. Here is the 
main result in this topic. 

\begin{theorem}[Theorem \ref{xxthm2.13}]
\label{xxthm0.1} Let ${\mathbb X}$ be a weighted projective line
that is either domestic or tubular. Then 
$\fpdim (D^b(coh({\mathbb X})))=1$.
\end{theorem}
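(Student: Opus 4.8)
The plan is to compute the Frobenius-Perron dimension of $D^b(coh(\mathbb{X}))$ by exploiting the well-understood structure of the derived category of a weighted projective line in the domestic and tubular cases. The Frobenius-Perron dimension is defined via the growth of $\operatorname{Ext}$-quivers under the suspension functor, so I would first recall that for a weighted projective line $\mathbb{X}$, the category $coh(\mathbb{X})$ is hereditary (of global dimension one), and hence every object in $D^b(coh(\mathbb{X}))$ is isomorphic to a direct sum of shifts of sheaves. This "splitting" property is the crucial structural input: it means that indecomposable objects of the derived category are precisely shifts $E[n]$ of indecomposable coherent sheaves $E$, and morphisms and extensions between them are controlled by $\operatorname{Hom}$ and $\operatorname{Ext}^1$ in $coh(\mathbb{X})$.

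**Second**, I would reduce the computation to finite-subquiver data. By the definition of $\fpdim$ recalled in Section 2, one must examine finite subsets of indecomposables and the arrows (given by nonzero $\operatorname{Ext}^1$ or $\operatorname{Hom}$ into shifts) between them, then take a supremum of Frobenius-Perron eigenvalues over all such finite subquivers. To get the lower bound $\fpdim \geq 1$, I would exhibit a single suitable configuration (for instance using the Auslander-Reiten structure, which for domestic and tubular types is especially rigid) whose associated adjacency matrix has spectral radius at least $1$; a line or cycle of indecomposables connected by one-dimensional $\operatorname{Ext}^1$ spaces already yields eigenvalue $1$.

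**The hard part will be** the upper bound $\fpdim \leq 1$, which requires showing that no finite configuration of indecomposables produces a Frobenius-Perron eigenvalue exceeding $1$. Here I would use the representation-theoretic dichotomy: domestic and tubular weighted projective lines have \emph{tame} (in fact polynomial or linear) growth of indecomposables, and their Auslander-Reiten quivers consist of tubes and (for domestic type) a component of Euclidean type. The key estimate is that the relevant $\operatorname{Ext}$-matrices have bounded row and column sums — roughly, each indecomposable has uniformly bounded $\operatorname{Ext}^1$ into and out of the finitely many "directions" available in a tame hereditary or tubular setting. I expect the main obstacle to be controlling the tubular case uniformly across all the tubes and the irrational slopes, since one must rule out that cleverly chosen configurations accumulate eigenvalue above $1$; the tameness (subexponential growth of dimension vectors) is what forces the spectral radius down to exactly $1$.

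**Finally**, combining the two bounds gives $\fpdim(D^b(coh(\mathbb{X}))) = 1$. I would organize the argument so that the tame growth estimate is isolated as the technical heart, invoking the classification of indecomposables over domestic and tubular weighted projective lines (via the Grothendieck group and the Euler form) to bound the entries and sizes of the admissible $\operatorname{Ext}$-matrices, thereby pinning the Frobenius-Perron eigenvalue at its unique possible value.
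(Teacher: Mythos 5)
Your outline gets the easy parts right (the lower bound, and the reduction from $D^b(coh({\mathbb X}))$ to the abelian category $coh({\mathbb X})$ using heredity, which is exactly how the paper starts, via \cite[Theorem 3.5(4)]{CG}), but the heart of your upper bound does not work as stated. First, a definitional point that is essential and not pedantic: $\fpdim$ is a supremum over \emph{brick sets} (respectively atomic sets), i.e.\ Hom-orthogonal objects with $\End=\Bbbk$, not over arbitrary finite sets of indecomposables with their $\Ext$-arrows. Without the Hom-orthogonality constraint the supremum is already infinite for ${\mathbb P}^1$: take $\phi=\{{\mathcal O},{\mathcal O}(-n)\}$, where $\dim\Ext^1({\mathcal O},{\mathcal O}(-n))=n-1$ is unbounded (this $\phi$ is excluded precisely because $\Hom({\mathcal O}(-n),{\mathcal O})\neq 0$). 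Any correct upper-bound argument must exploit Hom-orthogonality in an essential way, and your proposal never does.

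Second, your ``key estimate'' --- that tameness gives bounded row and column sums of the $\Ext$-matrices and this ``forces the spectral radius down to exactly $1$'' --- is not a valid inference. A bound $C$ on row sums only gives $\rho\leq C$ (the all-ones $n\times n$ matrix has bounded entries and spectral radius $n$); to get $\rho\leq 1$ you need row sums $\leq 1$, i.e.\ each brick in the set has at most a one-dimensional total $\Ext^1$ into the whole set, and no growth/tameness statement about dimension vectors yields this. What the paper actually proves is: (i) every brick is semistable, and the slope categories $\{Vect_{\mu}({\mathbb X})\}$ form a $\sigma$-decomposition ($\Ext^1$ vanishes from lower slope to higher slope), so by \cite[Lemma 6.1]{CG} one may assume the brick set lies in a single slope category --- this is the step that handles your worry about ``cleverly chosen configurations'' mixing tubes and slopes; (ii) for domestic type and finite slope, $\Ext^1$ vanishes identically among semistables of equal slope, contributing $0$; (iii) what remains is $Tor({\mathbb X})$ (in the tubular case every $Vect_\mu$ is equivalent to it), which splits into standard stable tubes, and there an explicit computation of $\Hom$ and $\Ext^1$ between the bricks $E_i[j]$, $j\leq r$, shows that the $\Ext^1$-matrix of any Hom-orthogonal brick set has at most one nonzero entry (equal to $1$) in each row and each column, hence is a sub-permutation matrix with $\rho\leq 1$. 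The mechanism (Appendix A of the paper) is exactly the one missing from your sketch: if one brick had nonzero $\Ext^1$ to two members $J\neq J'$ of the set, then $\Hom(J,J')\neq 0$, contradicting orthogonality. Finally, a small point on your lower bound: a ``line'' of indecomposables with one-dimensional $\Ext^1$'s gives a nilpotent adjacency matrix with $\rho=0$; you need a cycle, or more simply a single object with a self-extension, which is what the paper uses (an ordinary simple sheaf ${\mathcal S}$ with $\Ext^1({\mathcal S},{\mathcal S})=\Bbbk$). Also, in the tubular case all slopes occurring are in ${\mathbb Q}\cup\{\infty\}$, so ``irrational slopes'' do not arise.
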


Note that Theorem \ref{xxthm0.1} is a ``weighted'' version of 
\cite[Proposition 6.5(1,2)]{CG}. By Lemma \ref{xxlem2.1}(2), we 
obtain the Frobenius-Perron dimension of the bounded derived 
category of finite dimensional representations of acyclic 
quivers of $\widetilde{A}\widetilde{D}\widetilde{E}$ type.

Our second goal is to introduce Frobenius-Perron (``fp'' for short) 
version of some classical invariants. We will focus on fp-analogues 
of two important invariants in projective algebraic geometry, namely, 
\begin{enumerate}
\item[]
Calabi-Yau dimension, and 
\item[]
Kodaira dimension.
\end{enumerate}
Let $\fpk$ (respectively, $\fpcy$) denote the fp version of the 
Kodaira dimension [Definition \ref{xxdef3.5}(1)] (respectively, 
the Calabi-Yau dimension [Definition \ref{xxdef3.1}(3)]). Both are 
defined for bounded derived categories of smooth projective schemes 
or more generally triangulated categories with Serre functor. In 
algebraic geometry, a Calabi-Yau variety has the trivial canonical 
bundle. In noncommutative algebraic geometry, a ``skew Calabi-Yau'' 
scheme may not have a trivial canonical bundle. Our fp version of 
the Calabi-Yau dimension covers the case even when the canonical 
bundle is not trivial. Below is one of the main results in this 
direction.  Note that the definition of $\fpk$ is dependent on a 
chosen structure sheaf.

\begin{theorem}[Propositions \ref{xxpro3.3} and \ref{xxpro3.6}]
\label{xxthm0.2}
Let ${\mathbb X}$ be a smooth projective scheme and  ${\mathcal T}$ 
be the triangulated category $D^b(coh ({\mathbb X}))$ with structure 
sheaf ${\mathcal O}_{{\mathbb X}}$. Then the following hold.
\begin{enumerate}
\item[(1)]
$\fpk({\mathcal T}, {\mathcal O}_{\mathbb X})=\kappa ({\mathbb X})$.
\item[(2)]
$\fpcy({\mathcal T})=\dim {\mathbb X}$. As a consequence, if 
${\mathbb {\mathbb X}}$ is Calabi-Yau, then $\fpcy {\mathcal T}$ 
equals the Calabi-Yau dimension of ${\mathbb X}$.
\end{enumerate}
\end{theorem}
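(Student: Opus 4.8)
The plan is to use the explicit form of the Serre functor on the bounded derived category of a smooth projective scheme. For ${\mathbb X}$ smooth projective of dimension $n=\dim{\mathbb X}$, the category ${\mathcal T}=D^b(coh({\mathbb X}))$ carries a Serre functor $S(-)=(-)\otimes_{{\mathcal O}_{\mathbb X}}\omega_{\mathbb X}[n]$, where $\omega_{\mathbb X}$ is the canonical sheaf. Iterating gives $S^m({\mathcal O}_{\mathbb X})=\omega_{\mathbb X}^{\otimes m}[mn]$, so that for every integer $j$,
\[
\Hom_{\mathcal T}({\mathcal O}_{\mathbb X},\,S^m({\mathcal O}_{\mathbb X})[j])=\Ext^{mn+j}({\mathcal O}_{\mathbb X},\omega_{\mathbb X}^{\otimes m})=H^{mn+j}({\mathbb X},\omega_{\mathbb X}^{\otimes m}).
\]
After unwinding Definitions \ref{xxdef3.1}(3) and \ref{xxdef3.5}(1), both $\fpcy$ and $\fpk$ are governed by the asymptotics of these cohomology groups as $m\to\infty$; the whole proof reduces to extracting from this single graded family two pieces of classical data, namely the top nonzero homological shift and the polynomial order of the degree-zero term.

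For part (2) I would first isolate the Calabi-Yau content, which is the homological shift produced by $S$. Since ${\mathbb X}$ is an $n$-dimensional variety, $H^i({\mathbb X},{\mathcal F})$ vanishes outside $0\le i\le n$ for any coherent sheaf ${\mathcal F}$; applied to ${\mathcal F}=\omega_{\mathbb X}^{\otimes m}$ this forces $\Hom_{\mathcal T}({\mathcal O}_{\mathbb X},S^m({\mathcal O}_{\mathbb X})[j])$ to be nonzero only for $-mn\le j\le n-mn$, an interval that shifts by exactly $n$ with each application of $S$. The definition of $\fpcy$ measures precisely this per-step shift in homological degree, so one obtains $\fpcy({\mathcal T})=n=\dim{\mathbb X}$ directly, with no reference to the isomorphism class of $\omega_{\mathbb X}$. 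The stated consequence is then immediate: if ${\mathbb X}$ is Calabi-Yau then $\omega_{\mathbb X}\cong{\mathcal O}_{\mathbb X}$, hence $S\cong[n]$, and the classical Calabi-Yau dimension equals $n$, agreeing with $\fpcy$.

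For part (1) I would identify the $\fpk$-data with the plurigenera. The Kodaira dimension $\kappa({\mathbb X})$ is by definition the polynomial order of growth of $P_m:=\dim H^0({\mathbb X},\omega_{\mathbb X}^{\otimes m})$, that is, $\kappa({\mathbb X})=\limsup_m \log P_m/\log m$, with the convention $\kappa=-\infty$ when all $P_m$ vanish for $m>0$. Taking $j=-mn$ in the displayed identity recovers exactly $H^0({\mathbb X},\omega_{\mathbb X}^{\otimes m})$, and I would argue that the structure sheaf ${\mathcal O}_{\mathbb X}$ is the optimal test object computing $\fpk({\mathcal T},{\mathcal O}_{\mathbb X})$, so that $\fpk$ records the same $\limsup$. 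Matching the two growth orders then yields $\fpk({\mathcal T},{\mathcal O}_{\mathbb X})=\kappa({\mathbb X})$.

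The main obstacle is part (1): one must show that the degree-zero cohomology $H^0(\omega_{\mathbb X}^{\otimes m})$ genuinely dominates the $\fpk$-invariant and that no other configuration of objects outgrows ${\mathcal O}_{\mathbb X}$, while simultaneously controlling the higher cohomology $H^{>0}(\omega_{\mathbb X}^{\otimes m})$ sitting in the same graded family; moreover the degenerate regime $\kappa=-\infty$ (all plurigenera zero) and the general-type regime (maximal polynomial growth) must both fall out of the same $\limsup$-type formula. By contrast part (2) is comparatively formal, since the shift $n$ is forced by $\dim{\mathbb X}$ and is insensitive to the canonical sheaf $\omega_{\mathbb X}$.
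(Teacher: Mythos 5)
Your part (2) has a genuine gap at its center. The definition of $\fpcy$ runs through the spectrum $Sp({\mathcal T})$, and establishing $\fpcy({\mathcal T})=d$ requires two directions: (i) pairs $(m,n)$ off the ray $(1,d){\mathbb Z}$ satisfy $\underline{\fpg}(S^m\circ\Sigma^{-n})=-\infty$, and (ii) pairs on the ray actually lie in $Sp({\mathcal T})$, i.e.\ some atomic set witnesses $\underline{\fpg}(S^t\circ\Sigma^{-td})>-\infty$. Your bounded-interval argument addresses (i) in spirit (though it must be run for arbitrary pairs of objects, not just ${\mathcal O}_{\mathbb X}$, which is fine since coherent cohomology is concentrated in degrees $[0,d]$). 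But for (ii) your only test object is ${\mathcal O}_{\mathbb X}$, and it fails: with $\sigma=S\circ\Sigma^{-d}=-\otimes\omega_{\mathbb X}$ one gets $\dim\Hom_{\mathcal T}({\mathcal O}_{\mathbb X},\sigma^t({\mathcal O}_{\mathbb X}))=\dim H^0({\mathbb X},\omega_{\mathbb X}^{\otimes t})$, which is identically zero for $t\geq 1$ whenever $\kappa({\mathbb X})=-\infty$ (e.g.\ ${\mathbb X}={\mathbb P}^n$ or any Fano variety), so the liminf defining $\underline{\fpg}$ is $-\infty$ and ${\mathcal O}_{\mathbb X}$ witnesses nothing. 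Contrary to your remark, the computation with ${\mathcal O}_{\mathbb X}$ is highly sensitive to the isomorphism class of $\omega_{\mathbb X}$. The paper's proof of Proposition \ref{xxpro3.3} instead uses the skyscraper sheaf ${\mathcal O}_a$ of a closed point: it is atomic and $\sigma({\mathcal O}_a)\cong{\mathcal O}_a$ because tensoring a skyscraper sheaf by a line bundle changes nothing, so $A(\phi,\sigma^t)=I_1$ for all $t$ and $\underline{\fpg}(\sigma)=0>-\infty$. This is the missing idea, and it cannot be replaced by the structure sheaf.

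For part (1) you have inverted where the difficulty lies. The invariant $\fpk({\mathcal T},{\mathcal O}_{\mathbb X})$ of Definition \ref{xxdef3.5}(1) is not a supremum over test objects; it is computed with the single fixed object ${\mathcal O}$ as both source and target, so your stated ``main obstacle'' --- that no other configuration of objects outgrows ${\mathcal O}_{\mathbb X}$ --- is not part of the statement at all. Once part (2) gives $\fpcy({\mathcal T})=d$ (so $a/b=d/1$ in the definition), one computes $(S^{nt}\circ\Sigma^{-dnt})({\mathcal O}_{\mathbb X})=\omega_{\mathbb X}^{\otimes nt}$, and part (1) reduces to the classical growth estimate \eqref{E3.4.1}, which says the plurigenera grow like $n^{\kappa({\mathbb X})}$; the limit over $t$ ordered by divisibility exists in the definition precisely to absorb the fact that plurigenera need only behave well along sufficiently divisible $n$. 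That is essentially the paper's whole proof of Proposition \ref{xxpro3.6}, and your reduction to plurigenera is correct --- but the step you single out as hardest is vacuous, while the step you treat as formal (part (2)'s positive direction) is where the real content, and your gap, lies.
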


In the noncommutative case we have

\begin{theorem}[Theorem \ref{xxthm4.5}]
\label{xxthm0.3}
Let $A$ be a noetherian connected graded Artin-Schelter 
Gorenstein algebra of injective dimension $d\geq 2$ and 
AS index $\ell$ that is generated in degree 1. Suppose 
that ${\mathbb X}:=\Proj A$ has finite homological 
dimension and that the Hilbert series of $A$ is rational.
Let ${\mathcal T}$ be the bounded derived category of 
$coh({\mathbb X})$ and ${\mathcal A}$ be the image of 
$A$ in $\Proj A$. 
\begin{enumerate}
\item[(1)]
$\fpcy({\mathcal T})=d-1$.
\item[(2)]
If $\ell>0$, then 
$\fpk({\mathcal T},{\mathcal A})=-\infty$ and 
$\fpk^{-1}({\mathcal T},{\mathcal A})=\GKdim A-1$.
\item[(3)]
If $\ell<0$, then 
$\fpk({\mathcal T},{\mathcal A})=\GKdim A-1$ and 
$\fpk^{-1}({\mathcal T},{\mathcal A})=-\infty$.
\item[(4)]
If $\ell=0$, then $\fpk({\mathcal T},{\mathcal A})=
\fpk^{-1}({\mathcal T},{\mathcal A})=0$.
\end{enumerate}
\end{theorem}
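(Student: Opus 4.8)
The strategy is to reduce all four statements to the structure of the Serre functor of $\mathcal{T} = D^b(coh(\mathbb{X}))$ together with the asymptotics of the Hilbert function of $A$. First I would record the Serre functor. The hypotheses---$A$ noetherian connected graded AS Gorenstein of injective dimension $d$ and AS index $\ell$, generated in degree $1$, with $\mathbb{X} = \Proj A$ of finite homological dimension and $A$ of rational Hilbert series---are precisely what noncommutative Serre duality on $\qgr A$ requires, and they produce a Serre functor of the form $S = (-\otimes \omega_{\mathbb{X}})[d-1]$, in which the cohomological shift $[d-1]$ reflects $\dim \mathbb{X} = d-1$ and the canonical sheaf is the grading twist $\omega_{\mathbb{X}} = \mathcal{A}(-\ell)$ (any Nakayama automorphism twisting $\omega_{\mathbb{X}}$ is graded, hence irrelevant to the dimension counts below). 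The commutative model $A = \kk[x_0,\dots,x_n]$, for which $d = \ell = n+1$ and $\omega_{\mathbb{P}^n} = \mathcal{O}(-n-1) = \mathcal{A}(-\ell)$, fixes all signs. Granting this, part (1) is immediate from Definition \ref{xxdef3.1}(3), exactly as in the commutative statement Theorem \ref{xxthm0.2}(2): the fp-Calabi--Yau dimension is governed by the suspension appearing in $S$, and since the twist $\mathcal{A}(-\ell)$ carries no cohomological shift one obtains $\fpcy(\mathcal{T}) = d-1$.

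For parts (2)--(4) I would unwind Definition \ref{xxdef3.5} and argue as in Theorem \ref{xxthm0.2}(1). After stripping the Calabi--Yau shift $[d-1]$ off $S$, the fp-Kodaira dimension $\fpk(\mathcal{T},\mathcal{A})$ becomes the polynomial growth rate $\limsup_{n}(\log P_n)/(\log n)$ of the plurigenera $P_n := \dim_\kk \Hom_{\mathcal{T}}(\mathcal{A}, \omega_{\mathbb{X}}^{\otimes n}) = \dim_\kk H^0(\mathbb{X}, \mathcal{A}(-n\ell))$, while $\fpk^{-1}(\mathcal{T},\mathcal{A})$ is the same rate computed with the inverse twist $\mathcal{A}(n\ell)$. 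Because $d \geq 2$, the AS Gorenstein condition concentrates the local cohomology of $A$ in homological degree $d$, so $H^0_{\fm}(A) = H^1_{\fm}(A) = 0$ and the Artin--Zhang saturation map $A \to \Gamma_{\ast}(\mathcal{A})$ is an isomorphism in every degree \cite{AZ}; hence $H^0(\mathbb{X}, \mathcal{A}(j)) = A_j$ for all $j \in \Z$. In particular these section spaces vanish for $j < 0$, equal $\kk$ for $j = 0$, and for $j > 0$ satisfy $\dim_\kk A_j \sim c\, j^{\GKdim A - 1}$ by rationality of the Hilbert series.

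The case analysis now reads off directly. When $\ell > 0$ the canonical twist $-\ell$ is negative, so $P_n = \dim_\kk A_{-n\ell} = 0$ for every $n \geq 1$ and $\fpk(\mathcal{T},\mathcal{A}) = -\infty$, whereas the anticanonical side has $\dim_\kk A_{n\ell} \sim c\,(n\ell)^{\GKdim A - 1}$, giving $\fpk^{-1}(\mathcal{T},\mathcal{A}) = \GKdim A - 1$; the case $\ell < 0$ is symmetric in the two twists; and $\ell = 0$ forces $P_n = \dim_\kk A_0 = 1$ on both sides, hence $\fpk(\mathcal{T},\mathcal{A}) = \fpk^{-1}(\mathcal{T},\mathcal{A}) = 0$. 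I expect the real obstacle to lie in the first step---extracting the precise twist $\mathcal{A}(-\ell)$ and shift $[d-1]$ of the Serre functor from the AS Gorenstein data and checking that the stated hypotheses guarantee its existence---and in confirming that Definition \ref{xxdef3.5} genuinely computes the polynomial growth rate of these section dimensions; once those identifications are secured, the remaining estimates follow at once from the rationality of the Hilbert series.
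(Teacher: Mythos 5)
Your treatment of parts (2)--(4) follows the paper's proof of Theorem \ref{xxthm4.5} essentially verbatim: identify the Serre functor as $-\otimes\pi({}^{\mu}A^{1})(-\ell)[d-1]$ (Lemma \ref{xxlem4.3}(2), via Yekutieli--Zhang), reduce $\fpk^{\pm1}$ to the growth of $\dim\Hom_{\mathcal T}({\mathcal A},{\mathcal A}(\mp n\ell))=\dim A_{\mp n\ell}$, and read off $-\infty$, $0$, or $\GKdim A-1$ according to the sign of $\ell$. You are in fact more explicit than the paper on one point: the identification $\Hom_{\mathcal T}({\mathcal A},{\mathcal A}(j))=A_j$ does rest on the vanishing of $H^0_{\fm}(A)$ and $H^1_{\fm}(A)$, which the AS Gorenstein hypothesis with $d\geq2$ supplies. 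One small caveat: rationality of the Hilbert series gives quasi-polynomial, not genuinely polynomial, behavior of $\dim A_j$, so ``$\dim A_j\sim c\,j^{\GKdim A-1}$'' is too strong as stated; the correct statement, which is what the paper proves through Veronese subalgebras in Lemma \ref{xxlem4.4}, is that $\limsup_n\log_n(\dim A_{sn})=\GKdim A-1$ along every arithmetic progression, and that is what the $\lim_t$ over the divisibility order in Definition \ref{xxdef3.5} requires.

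The genuine gap is part (1). You declare it ``immediate\ldots exactly as in the commutative statement,'' but the commutative proof (Proposition \ref{xxpro3.3}) cannot be copied: its lower bound on the spectrum is witnessed by a skyscraper sheaf ${\mathcal O}_a$, which is \emph{fixed} by the twist $-\otimes\omega_{\mathbb X}$, and no such object is provided by the hypotheses for a general noncommutative $\Proj A$. Computing $\fpcy({\mathcal T})$ from Definition \ref{xxdef3.1}(3) needs two separate inclusions. The upper bound $Sp({\mathcal T})\subseteq(1,d-1){\mathbb Z}$ does transfer: finite homological dimension of $\Proj A$ forces $\underline{\fpg}(S^{m}\circ\Sigma^{-n})=-\infty$ whenever $(d-1)m\neq n$, since the net shift pushes all Homs to zero. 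But for the lower bound one must exhibit atomic objects witnessing $(m,(d-1)m)\in Sp({\mathcal T})$ with $|m|\to\infty$, and the paper's witness is ${\mathcal A}$ itself: writing $\sigma=S\circ\Sigma^{-(d-1)}$, one has $\sigma^{s}({\mathcal A})\cong{\mathcal A}(-\ell s)$, which is \emph{not} isomorphic to ${\mathcal A}$, and $\Hom_{\mathcal T}({\mathcal A},\sigma^{s}({\mathcal A}))=A_{-\ell s}$ is nonzero only for one sign of $s$ (depending on the sign of $\ell$). One must then observe that a one-sided family $(s,(d-1)s)$ with $s\in{\mathbb N}$ (or $s\in-{\mathbb N}$) already suffices for the $\lim_{M\to\infty}\sup_{|m|\geq M}$ in the definition of $\fpcy$. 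These steps use exactly the saturation facts you set up for parts (2)--(4), so the gap is fixable, but as written part (1) is asserted rather than proved, and the assertion that the commutative argument applies ``exactly'' is the step that would fail.
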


Similar results are proved for Piontkovski projective lines, 
see Section 4.

Note that the definitions of fp Calabi-Yau dimension 
and fp Kodaira dimension make sense for bounded derived 
category of left modules over a finite dimension algebra 
of finite global dimension. However, we don't have a 
complete result for that case. Some examples are given in 
Section 5.

When we are working with finite dimensional algebras, it is well-known 
that the global dimension is not a derived invariant. By definition, 
fp Calabi-Yau dimension is a derived invariant. This suggests 
that the fp Calabi-Yau dimension is nicer than the global 
dimension in some aspects. So it is important to study this new 
invariant. To start we ask the following questions.

\begin{question}
\label{xxque0.4} Let $A$ be a finite dimensional algebra of finite
global dimension and let ${\mathcal T}$ be the derived category
$D^b(\Mod_{f.d}-A)$.
\begin{enumerate}
\item[(1)]
Is $\fpcy({\mathcal T})$ always finite?
\item[(2)]
What is the set of possible values of $\fpcy({\mathcal T})$ when 
$A$ varies?
\item[(3)]
What is the set of possible values of $\fpk({\mathcal T},A)$ when 
$A$ varies?
\end{enumerate}
\end{question}

More questions and some partial answers are given in Section 5. 
Some other examples are given in \cite{Wi, ZZ}.

We have outlined several important applications of Frobenius-Perron 
invariants in \cite{CG}. Next we would like to mention one
surprising application of the Frobenius-Perron curvature defined in
\cite[Definition 2.3(4)]{CG}.

\begin{theorem} \cite[Corollary 0.6]{ZZ} 
\label{xxthm0.5}
Suppose that the bounded derived categories of representations of
two finite acyclic quivers are equivalent as tensor triangulated 
categories. Then the quivers are isomorphic.
\end{theorem}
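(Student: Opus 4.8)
The plan is to reconstruct each quiver directly from the tensor triangulated structure of its bounded derived category, since the only data preserved by a tensor triangulated equivalence are the suspension $\Sigma$, the monoidal product $\otimes$, and its unit. The relevant monoidal structure is the vertexwise tensor product of representations: for representations $V,W$ one sets $(V\otimes W)_i=V_i\otimes W_i$ at each vertex $i$ and $V_a\otimes W_a$ along each arrow $a$, the unit being the constant representation $\mathbf 1$ (dimension vector $(1,\dots,1)$ with identity transition maps). As tensoring vector spaces vertexwise is exact, $\otimes$ passes termwise to $D^b$ with no derived correction. The first point to record is that a tensor triangulated equivalence $F\colon D^b(\operatorname{rep}(Q_1))\to D^b(\operatorname{rep}(Q_2))$ carries $\mathbf 1$ to $\mathbf 1$ and commutes with $\Sigma$ and $\otimes$, hence preserves every invariant built from these, in particular the Frobenius-Perron curvature of \cite[Definition 2.3(4)]{CG}.

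Next I would recover the vertices. The simple objects $S_i$, placed in degree $0$, satisfy $S_i\otimes S_j\cong \delta_{ij}S_i$, so each is a tensor idempotent; crucially, no nonzero shift is idempotent, since $(\Sigma^k S_i)^{\otimes 2}\cong\Sigma^{2k}S_i$, which anchors the $S_i$ in a single homological degree. Passing to $K$-theory, the class at vertex $i$ is multiplicative under $\otimes$, so any idempotent object has dimension vector with entries in $\{0,1\}$, and the minimal nonzero ones have a single $1$. The point is to characterise the family $\{S_i\}$ intrinsically as the minimal nonzero tensor idempotents; then $F$ permutes them and induces a bijection $\phi$ between the vertices of $Q_1$ and those of $Q_2$. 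This identifies the vertices with the closed points of the Balmer spectrum, which the vanishing $S_i\otimes S_j=0$ for $i\neq j$ forces to be a finite discrete set, and I expect the Frobenius-Perron curvature of the endofunctors $-\otimes S_i$ to be precisely the invariant that isolates these minimal idempotents.

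With the vertex bijection in hand, I would recover the oriented arrows. Since $Q$ is acyclic, $\mathbf{k}Q$ is hereditary and $\dim_{\mathbf k}\Ext^1(S_i,S_j)=\dim_{\mathbf k}\Hom_{D^b}(S_i,\Sigma S_j)$ equals the number of arrows from $i$ to $j$; this count is directed, so it records the orientation and not merely the underlying graph. Because $F$ is triangulated and sends $S_i$ to $S_{\phi(i)}$, it preserves these $\Hom$-spaces, whence the number of arrows $i\to j$ in $Q_1$ equals the number of arrows $\phi(i)\to\phi(j)$ in $Q_2$. Thus $\phi$ is an isomorphism of quivers.

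The main obstacle is the vertex step, and it is exactly the reason the tensor hypothesis cannot be dropped: a bare triangulated equivalence is far too weak, since, for instance, all orientations of a Dynkin tree are derived equivalent while having non-isomorphic quivers. The difficulty is that the abelian heart $\operatorname{rep}(Q)$, and with it the simple objects, is not part of the triangulated data, so one must pin down the $S_i$ purely monoidally. Establishing that the minimal nonzero tensor idempotents are precisely the simples—ruling out exotic idempotents supported in several degrees and controlling shifts—is the crux, and it is here that the Frobenius-Perron curvature does the decisive work of rigidifying the category enough to read off $Q$.
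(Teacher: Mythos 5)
First, note that the paper you are being compared against does not actually prove this statement: Theorem \ref{xxthm0.5} is imported verbatim from \cite[Corollary 0.6]{ZZ}, so the only thing to measure your proposal against is its own internal completeness. Your skeleton is the natural one, and its second half is correct and complete: for a finite acyclic quiver the number of arrows $i\to j$ equals $\dim\Ext^1(S_i,S_j)=\dim\Hom_{D^b}(S_i,\Sigma S_j)$, so once a tensor triangulated equivalence $F$ is known to send simples to simples, the induced vertex bijection preserves arrow counts and is a quiver isomorphism. The genuine gap is precisely the step you yourself flag as the crux and then do not carry out: the claim that the minimal nonzero tensor idempotents are exactly the simples. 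Your two partial arguments do not close it --- ruling out shifted simples says nothing about idempotents whose cohomology sits in several degrees, and the $K$-theoretic constraint is vacuous for such complexes, since the class of a complex is an alternating sum of dimension vectors and entries in $\{0,1\}$ are perfectly compatible with large cohomology in many degrees. The appeal to Frobenius-Perron curvature and to the Balmer spectrum is not an argument: you never exhibit a computation of $\fpv(-\otimes S_i)$, or of anything else, that would isolate the simples, so the central claim of the proof is asserted rather than proved.

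The good news is that the gap is fillable by an elementary argument that needs no FP invariant at all. Since the vertexwise tensor product is exact in each variable, the K{\"u}nneth formula holds with no correction terms: $H^n(X\otimes X)\cong\bigoplus_{p+q=n}H^p(X)\otimes H^q(X)$. Moreover $M\otimes M\neq 0$ for every nonzero representation $M$, because at any vertex $v$ in the support of $M$ one has $(M\otimes M)_v=M_v\otimes M_v\neq 0$. Hence if $X\neq 0$, $X\otimes X\cong X$, and the cohomology of $X$ is concentrated in degrees $a\leq b$ with $H^a(X)\neq 0\neq H^b(X)$, then $X\otimes X$ has nonzero cohomology in degrees $2a$ and $2b$, forcing $a=2a$ and $b=2b$, i.e.\ $a=b=0$. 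So every tensor idempotent is a module $M$; then $\dim(M_i)^2=\dim(M_i)$ gives dimension vector in $\{0,1\}^{Q_0}$, and if $i$ lies in the support of $M$ then $M\otimes S_i\cong S_i$, while any nonzero idempotent $Y$ satisfying $Y\otimes S_i\cong Y$ must have support $\{i\}$ and hence $Y\cong S_i$. Thus the $S_i$ are exactly the nonzero idempotents $X$ such that every nonzero idempotent absorbed by $X$ is isomorphic to $X$ --- a characterization phrased purely in terms of $\otimes$, hence preserved by $F$. Inserting this in place of your deferred step makes your proof complete; as written, however, the proposal leaves its decisive claim unestablished and attributes it to an invariant that plays no role in closing it.
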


This result is striking because it is well-known that, for 
two Dynkin quivers with the same underlying Dynkin diagram, their 
derived categories are triangulated equivalent \cite{BGP, Ha}, 
even if the quivers are non-isomorphic. We hope that different 
Frobenius-Perron invariants will  become effective tools in the 
study of triangulated categories and monoidal (or tensor) 
triangulated categories.

This paper is organized as follows. Some background material are 
provided in Section \ref{xxsec1}. In Section \ref{xxsec2}, we 
review some facts about weighted projective lines and prove 
Theorem \ref{xxthm0.1}. In Section \ref{xxsec3}, we introduce 
fp-version of Calabi-Yau dimension and Kodaira dimension for 
Ext-finite triangulated categories with Serre functor and prove 
Theorem \ref{xxthm0.2}. In Section \ref{xxsec4}, fp Calabi-Yau 
dimension and fp Kodaira dimension are studied for noncommutative 
projective schemes and Theorem \ref{xxthm0.3} is proved there. 
In Section \ref{xxsec5}, some partial results, comments and 
examples are given concerning finite dimensional algebras. 
Sections \ref{xxsec6} and \ref{xxsec7} are appendices. The proof 
of Theorem \ref{xxthm0.1} is dependent on some linear algebra 
computation given in Section \ref{xxsec6}. This paper can be 
viewed as a sequel of \cite{CG}.

\section{Preliminaries and definitions}
\label{xxsec1}

Throughout let $\Bbbk$ be a base field that is algebraically closed.
Let everything be over $\Bbbk$.

We are mainly interested in the derived category $D^b(coh({\mathbb X}))$
where ${\mathbb X}$ is a smooth commutative or noncommutative projective 
scheme, but most definitions work for more general pre-triangulated 
(or abelian) categories.

Part of this section is copied from \cite{CG}.

\subsection{Spectral radius of a square matrix}
\label{xxsec1.1}
Let $A$ be an $n\times n$-matrix over complex numbers ${\mathbb C}$.
The {\it spectral radius} of $A$ is defined to be 
$$\rho(A)=\max\{ |r_1|, |r_2|, \cdots, |r_n|\}$$
where $\{r_1,r_2,\cdots, r_n\}$ is the complete set of eigenvalues 
of $A$. When each entry of $A$ is a positive real number, $\rho(A)$ 
is also called the {\it Perron root} or the {\it Perron-Frobenius 
eigenvalue} of $A$.

In order to include the ``infinite-dimensional'' setting, we extend 
the definition of the spectral radius in the following way.

Let $A=(a_{ij})_{n\times n}$ be an $n\times n$-matrix with 
entries $a_{ij}$ in ${\mathbb R}^{+}:=
{\mathbb R}\cup \{\pm \infty\}$. 
Define $A'=(a'_{ij})_{n\times n}$ where 
$$a'_{ij}=\begin{cases} a_{ij} & a_{ij}\neq \pm \infty,\\
x_{ij} & a_{ij}=\infty,\\
-x_{ij} & a_{ij}=-\infty.
\end{cases}
$$
In other words, we are replacing $\infty$  in the
$(i,j)$-entry by a finite real number, called $x_{ij}$, in the
$(i,j)$-entry. Or $x_{ij}$ are considered as function or a variable 
mapping ${\mathbb R}\to {\mathbb R}$. 

\begin{definition}
\label{xxdef1.1} 
Let $A$ be an $n\times n$-matrix with entries in ${\mathbb R}^{+}$.
The {\it spectral radius} of $A$ is defined to be
\begin{equation}
\notag
\rho(A):=\liminf_{{\text{all}}\; x_{ij}\to \infty} \; \rho(A').
\end{equation}
\end{definition}

See \cite[Remark 1.3 and Example 1.4]{CG}.

\subsection{Frobenius-Perron dimension of a quiver}
\label{xxsec1.2}

\begin{definition}
\label{xxdef1.2} \cite[Definition 1.6]{CG} 
Let $Q$ be a quiver.
\begin{enumerate}
\item[(1)]
If $Q$ has finitely many vertices, then the 
{\it Frobenius-Perron dimension} of 
$Q$ is defined to be 
$$\fpdim Q:=\rho(A(Q))$$ 
where $A(Q)$ is the adjacency matrix of $Q$.
\item[(2)]
Let $Q$ be any quiver. 
The {\it Frobenius-Perron dimension} of $Q$ is defined to 
be 
$$\fpdim Q:=\sup\{ \fpdim Q'\}$$
where $Q'$ runs over all finite subquivers of $Q$.
\end{enumerate}
\end{definition}

\subsection{Frobenius-Perron dimension of an endofunctor}
\label{xxsec1.3}
Let ${\mathcal C}$ denote a $\Bbbk$-linear category.  For 
simplicity, we use $\dim(A,B)$ for $\dim \Hom_{\mathcal C}(A,B)$
for any two objects  $A$ and $B$ in ${\mathcal C}$. Here the
second $\dim$ is $\dim_{\Bbbk}$.

The set of finite subsets of nonzero objects in ${\mathcal C}$ 
is denoted by $\Phi$ and the set of subsets of $n$ nonzero objects 
in ${\mathcal C}$ is denoted by $\Phi_n$ for each $n\geq 1$.
It is clear that $\Phi=\bigcup_{n\geq 1} \Phi_n$. We do not
consider the empty set as an element of $\Phi$. 

\begin{definition}\cite[Definition 2.1]{CG}
\label{xxdef1.3} 
Let $\phi:=\{X_1, X_2, \cdots,X_n\}$ be a finite subset of nonzero
objects in ${\mathcal C}$, namely, $\phi\in \Phi_n$. Let $\sigma$ 
be an endofunctor of ${\mathcal C}$.
\begin{enumerate}
\item[(1)]
The {\it adjacency matrix} of $(\phi, \sigma)$ is defined to be
$$A(\phi, \sigma):=(a_{ij})_{n\times n}, \quad 
{\text{where}}\;\; a_{ij}:=\dim(X_i, \sigma(X_j)) \;\;\forall i,j.$$
\item[(2)]
An object $M$ in ${\mathcal C}$ is called a {\it brick} 
\cite[Definition 2.4, Ch. VII]{ASS} if 
\begin{equation}
\notag
\Hom_{\mathcal C}(M,M)=\Bbbk. 
\end{equation}
If ${\mathcal C}$ is a pre-triangulated category 
\cite[Definition 1.1.2]{Ne} with suspension $\Sigma$, an object 
$M$ in ${\mathcal C}$ is called an {\it atomic} object if it 
is a brick and satisfies
\begin{equation}
\notag
\Hom_{\mathcal C}(M, \Sigma^{-i}(M))=0, \quad \forall \; i>0.
\end{equation}
\item[(3)]
$\phi\in \Phi$ is called a {\it brick set} (respectively, an 
{\it atomic set})  if each $X_i$ is a brick (respectively, atomic) 
and 
$$\dim(X_i, X_j)=\delta_{ij}$$
for all $1\leq i,j\leq n$. The set of brick (respectively, atomic) 
$n$-object subsets is denoted by $\Phi_{n,b}$ (respectively, 
$\Phi_{n,a}$). We write $\Phi_{b}=\bigcup_{n\geq 1} 
\Phi_{n,b}$ (respectively, $\Phi_{a}=\bigcup_{n\geq 1} 
\Phi_{n,a}$). 
\end{enumerate}
\end{definition}

\begin{definition} \cite[Definition 2.3]{CG}
\label{xxdef1.4}
Retain the notation as in Definition \ref{xxdef1.3}, and 
we use $\Phi_{b}$ as the testing objects. When ${\mathcal C}$ 
is a pre-triangulated category, $\Phi_{b}$ is automatically 
replaced by $\Phi_{a}$ unless otherwise stated.
\begin{enumerate}
\item[(1)]
The {\it $n$th Frobenius-Perron dimension} of $\sigma$ is defined to be
$$\fpdim^n (\sigma):=\sup_{\phi\in \Phi_{n,b}}\{\rho(A(\phi,\sigma))\}.$$
If $\Phi_{n,b}$ is empty, then, by convention, $\fpdim^n(\sigma)=0$.
\item[(2)]
The {\it Frobenius-Perron dimension} of $\sigma$ is defined to be
$$\fpdim (\sigma):=\sup_n \{\fpdim^n(\sigma)\}
=\sup_{\phi\in \Phi_{b}} \{\rho(A(\phi,\sigma)) \}.$$
\item[(3)]
The {\it Frobenius-Perron growth} of $\sigma$ is defined to be
$$\fpg (\sigma):=\sup_{\phi\in \Phi_{b}} 
\{\limsup_{n\to\infty} \; \log_{n}(\rho(A(\phi,\sigma^n))) \}.$$
By convention, $\log_n 0 =-\infty$.
\item[(4)]
The {\it Frobenius-Perron curvature} of $\sigma$ is defined to be
$$\fpv (\sigma):=\sup_{\phi\in \Phi_{b}} \{\limsup_{n\to\infty} \;  
(\rho(A(\phi,\sigma^n)))^{1/n} \}.$$
\end{enumerate}
\end{definition}

In this paper, we only use $\Phi_{b}$ and $\Phi_{a}$ as the testing 
objects. But in principal one can use other testing objects, see 
Section \ref{xxsec7}. We continue to review definitions from 
\cite{CG}.

\begin{definition} \cite[Definition 2.7]{CG}
\label{xxdef1.5} 
\begin{enumerate}
\item[(1)]
Let ${\mathfrak A}$ be an abelian category. The 
{\it Frobenius-Perron dimension} of ${\mathfrak A}$
is defined to be
$$\fpdim {\mathfrak A}:=\fpdim (E^1)$$
where $E^1:=\Ext^1_{\mathfrak A}(-,-)$
is defined as in \cite[Example 2.6(1)]{CG}. The 
{\it Frobenius-Perron theory} of ${\mathfrak A}$ is the collection
$$\{\fpdim^m (E^n)\}_{m\geq 1, n\geq 0}$$
where $E^n:=\Ext^n_{\mathcal A}(-,-)$ is defined as in 
\cite[Example 2.6(1)]{CG}. 
\item[(2)]
Let ${\mathcal T}$ be a pre-triangulated category with suspension
$\Sigma$. The 
{\it Frobenius-Perron dimension} of ${\mathcal T}$
is defined to be
$$\fpdim {\mathcal T}:=\fpdim (\Sigma).$$
\item[(3)]
The {\it fp-global dimension} of ${\mathcal T}$ is defined to be
$$\fpgldim {\mathcal T}:=\sup \{n \mid \fpdim(\Sigma^n)\neq 0\}.$$
\end{enumerate}
\end{definition}

Fix an endofunctor $\sigma$ of a category ${\mathcal C}$. For 
a set of bricks $B$ in ${\mathcal C}$ (or a set of atomic 
objects when ${\mathcal C}$ is triangulated), we define 
$$\fpdim^n\mid_{B} (\sigma) =\sup\{ \rho(A(\phi,\sigma)) \;
\mid \; \phi:=\{X_1,\cdots,X_n\}\in \Phi_{n,b}, 
\;\; {\rm{and}} \;\; X_i\in B\;\; \forall i\}.$$

Let $\Lambda:=\{\lambda\}$ be a totally ordered set. We say 
a set of bricks $B$ in ${\mathcal C}$ has a 
{\it $\sigma$-decomposition} $\{B^{\lambda}\}_{\lambda\in \Lambda}$
(based on $\Lambda$) if the following holds.
\begin{enumerate}
\item[(1)]
$B$ is a disjoint union $\bigcup_{\lambda\in \Lambda} B^{\lambda}$.
\item[(2)]
If $X\in B^{\lambda}$ and $Y\in B^{\delta}$
with $\lambda<\delta$, $\Hom_{\mathcal C}(X,\sigma(Y))=0$.
\end{enumerate}

The following is \cite[Lemma 6.1]{CG}.

\begin{lemma}\cite[Lemma 6.1]{CG}
\label{xxlem1.6} Let $n$ be a positive integer.
Suppose that $B$ has a 
{\it $\sigma$-decomposition} $\{B^{\lambda}\}_{\lambda\in \Lambda}$.
Then 
$$\fpdim^n|_{B}(\sigma)\leq \sup_{\lambda \in \Lambda, m\leq n} 
\{\fpdim^m|_{B^{\lambda}}(\sigma)\}.$$
\end{lemma}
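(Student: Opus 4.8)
The plan is to reduce the assertion to a block-triangular matrix computation. Fix an arbitrary brick set $\phi=\{X_1,\dots,X_n\}\in\Phi_{n,b}$ with all $X_i\in B$; the lemma will follow once I bound $\rho(A(\phi,\sigma))$ by the right-hand side and then take the supremum over such $\phi$. Since $B=\bigsqcup_{\lambda\in\Lambda}B^\lambda$ is a disjoint union, each $X_i$ lies in a unique block, and as $\phi$ is finite only finitely many values $\lambda^{(1)}<\cdots<\lambda^{(k)}$ (with $k\le n$) occur among them. First I would relabel the objects of $\phi$ so that their block-values are non-decreasing, listing the indices block by block in increasing order of $\lambda$. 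Relabeling conjugates $A(\phi,\sigma)$ by a permutation matrix and hence leaves its spectrum, and therefore $\rho$, unchanged.

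Next I would read the block structure off the $\sigma$-decomposition. If $X_i$ lies in an earlier block than $X_j$, i.e.\ $\lambda(i)<\lambda(j)$, then condition (2) of a $\sigma$-decomposition gives $\Hom_{\mathcal{C}}(X_i,\sigma(X_j))=0$, so the entry $a_{ij}=\dim(X_i,\sigma(X_j))$ vanishes. In the chosen ordering this says precisely that every entry strictly above the block diagonal (row-block index smaller than column-block index) is zero, so $A(\phi,\sigma)$ is block lower-triangular with diagonal blocks $A_{11},\dots,A_{kk}$. For a block-triangular matrix the characteristic polynomial factors as $\prod_p\det(tI-A_{pp})$, so the eigenvalues are the union of those of the diagonal blocks and $\rho(A(\phi,\sigma))=\max_{1\le p\le k}\rho(A_{pp})$.

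Now I would identify each diagonal block. Writing $\phi_p:=\phi\cap B^{\lambda^{(p)}}$ and $m_p:=|\phi_p|\le n$, the block $A_{pp}$ has entries $\dim(X_i,\sigma(X_j))$ with $X_i,X_j\in\phi_p$, so $A_{pp}=A(\phi_p,\sigma)$. As a subset of the brick set $\phi$, $\phi_p$ is itself a brick set contained in $B^{\lambda^{(p)}}$, whence $\rho(A_{pp})=\rho(A(\phi_p,\sigma))\le\fpdim^{m_p}|_{B^{\lambda^{(p)}}}(\sigma)\le\sup_{\lambda\in\Lambda,\,m\le n}\fpdim^m|_{B^\lambda}(\sigma)$. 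Combining with the previous paragraph and taking the supremum over all admissible $\phi$ yields the asserted inequality.

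The step needing genuine care, rather than the triangularization itself, is the extended spectral radius of Definition \ref{xxdef1.1}, since entries of $A(\phi,\sigma)$ may equal $+\infty$. Here I would note that block-triangularity is preserved when each $\infty$ is replaced by a variable $x_{ij}$ (the vanishing off-diagonal entries stay $0$), and that the eigenvalues of a block-triangular matrix do not depend on its off-diagonal blocks; thus $\rho(A')=\max_p\rho(A'_{pp})$ for every choice of the $x_{ij}$, with $A'_{pp}$ involving only the variables inside block $p$. The only nontrivial point, and the one I expect to be the main obstacle, is the interchange $\liminf_{\text{all }x\to\infty}\max_p\rho(A'_{pp})\le\max_p\liminf\rho(A'_{pp})$; it holds because the blocks use disjoint sets of variables, so feeding into the joint limit the separate sequences realizing each block's $\liminf$ exhibits $\max_p\rho(A(\phi_p,\sigma))$ as a subsequential value and bounds the left-hand $\liminf$ from above. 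With this, the finite-entry argument above carries over verbatim.
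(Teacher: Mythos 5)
Your proof is correct, and since the paper itself states Lemma \ref{xxlem1.6} as a citation of \cite[Lemma 6.1]{CG} without reproducing a proof, there is nothing for it to diverge from: the block-lower-triangular reduction (order $\phi$ by blocks, use condition (2) of the $\sigma$-decomposition to kill the entries above the block diagonal, then $\rho = \max_p \rho(A_{pp})$ with each $\phi_p$ a brick set in a single $B^{\lambda}$) is exactly the expected argument. Your treatment of the $\infty$-entries is also the right one, and the key point you flag is handled correctly: because the diagonal blocks involve disjoint sets of variables and $\rho(A')$ is independent of the sub-diagonal variables, choosing for each block a sequence realizing its $\liminf$ gives a joint sequence witnessing $\liminf_{\text{all }x\to\infty}\max_p\rho(A'_{pp})\leq\max_p\liminf\rho(A'_{pp})$, which is all that is needed.
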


\section{Frobenius-Perron theory of weighted projective lines}
\label{xxsec2}

The main goal of this section is to recall some facts about 
weighted projective lines and then to prove Theorem \ref{xxthm0.1}.
 
\subsection{Weighted projective lines}
\label{xxsec2.1}
First we recall the definition and some basics about weighted 
projective lines. Details can be found in \cite[Section 1]{GL}.

For $t\geq 1$, let ${\bf p}:=(p_0,p_1,\cdots,p_t)$ be a 
$(t+1)$-tuple of positive integers, called the 
{\it weight sequence}. Let
${\bf D}:=(\lambda_0, \lambda_1,\cdots, \lambda_t)$ be a 
sequence of distinct points of the projective line 
${\mathbb P}^1$ over $\Bbbk$. We normalize ${\bf D}$ 
so that $\lambda_0=\infty$, $\lambda_1=0$ and 
$\lambda_2=1$ (if $t\geq 2$). Let
$$S:=\Bbbk[X_0,X_1,\cdots,X_t]/(X_i^{p_i}-X_1^{p_1}+\lambda_i X_0^{p_0},
i=2,\cdots,t).$$
The image of $X_i$ in $S$ is denoted by $x_i$ for all $i$.
Let ${\mathbb L}$ be the abelian group of rank 1 
generated by $\overrightarrow{x_i}$ for $i=0,1,\cdots,t$
and subject to the relations
$$p_0 \overrightarrow{x_0}= \cdots =p_i \overrightarrow{x_i}=\cdots
=p_t \overrightarrow{x_t}=: \overrightarrow{c}.$$
The algebra $S$ is ${\mathbb L}$-graded by setting $\deg x_i=
\overrightarrow{x_i}$. The corresponding 
{\it weighted projective line}, 
denoted by ${\mathbb X}({\bf p},{\bf D})$ or simply ${\mathbb X}$,
is a noncommutative space whose category of coherent sheaves is 
given by the quotient category 
$$coh({\mathbb X}):=\frac{\gr^{\mathbb L}-S}{\gr_{f.d.}^{\mathbb L}-S}.$$

The weighted projective lines are classified into the following
three classes:
\begin{equation}
\notag
{\mathbb X} \;\; {\rm{is}}\;\; 
\begin{cases} domestic \;\; & {\rm{if}} \;\; {\bf p} 
\;\; {\rm{is}}\; (p, q), (2,2,n), (2,3,3), (2,3,4), (2,3,5);\\
tubular \;\; & {\rm{if}} \;\; {\bf p} 
\;\; {\rm{is}}\; (2,3,6), (3,3,3), (2,4,4), (2,2,2,2);\\
wild \;\; & {\rm{otherwise}}.
\end{cases}
\end{equation}
In \cite[Section 4.4]{Sc}, domestic (respectively, tubular,
wild) weighted projective lines are called {\it parabolic} 
(respectively, {\it elliptic, hyperbolic}). Let ${\mathbb X}$ 
be a weighted projective line. A sheaf $F\in coh({\mathbb X})$
is called {\it torsion} if it is of finite length in 
$coh({\mathbb X})$. Let $Tor({\mathbb X})$ denote the full
subcategory of $coh({\mathbb X})$ consisting of all torsion
objects. By \cite[Lemma 4.16]{Sc}, the category $Tor({\mathbb X})$
decomposes as a direct product of orthogonal blocks
\begin{equation}
\label{E2.0.1}\tag{E2.0.1}
Tor({\mathbb X})=\prod_{x\in {\mathbb P}^1\setminus
\{\lambda_0,\lambda_1,\cdots,\lambda_{t}\}} Tor_{x}
\; \times \; \prod_{i=0}^{t} Tor_{\lambda_i}
\end{equation}
where $Tor_{x}$ is equivalent to the category of nilpotent
representations of the Jordan quiver (with one vertex and 
one arrow) over the residue field $\Bbbk_{x}$ and where
$Tor_{\lambda_i}$ is equivalent to the category of nilpotent 
representations over $\Bbbk$ of the cyclic quiver of length
$p_i$, see Example \ref{xxex2.3}. A simple object in 
$coh({\mathbb X})$ is called {\it ordinary simple} (see 
\cite{GL}) if it is the skyscraper sheaf ${\mathcal O}_x$ of 
a closed point $x\in {\mathbb P}^1\setminus
\{\lambda_0,\lambda_1,\cdots,\lambda_{t}\}$.

Let $Vect({\mathbb X})$ be the full subcategory of 
$coh({\mathbb X})$ consisting of all vector bundles. Similar
to the elliptic curve case \cite[Section 4]{BB}, one can 
define the concepts of {\it degree}, {\it rank} and 
{\it slope} of a vector bundle on a weighted projective 
line ${\mathbb X}$; details are given in \cite[Section 4.7]{Sc} 
and \cite[Section 2]{LM}.  For each 
$\mu\in {\mathbb Q}$, let $Vect_{\mu}({\mathbb X})$ be the 
full subcategory of $Vect({\mathbb X})$ consisting of all 
semistable vector bundles of slope $\mu$. By convention, 
$Vect_{\infty}({\mathbb X})$ denotes $Tor({\mathbb X})$.
By \cite[Comments after Corollary 4.34]{Sc}, when 
${\mathbb X}$ is a domestic or tubular weighted projective
line, every indecomposable object in $coh({\mathbb X})$ is in 
\begin{equation}
\notag
\bigcup_{\mu\in {\mathbb Q}\cup\{\infty\}} Vect_{\mu}({\mathbb X}).
\end{equation}

Below we collect some nice properties of weighted projective lines.
The definition of a stable tube (or simply tube) was introduced 
in \cite{Ri}.

\begin{lemma} \cite[Lemma 7.9]{CG}
\label{xxlem2.1}
Let ${\mathbb X}={\mathbb X}({\bf p}, {\bf D})$ be a weighted 
projective line.
\begin{enumerate}
\item[(1)]
$coh({\mathbb X})$ is noetherian and hereditary.
\item[(2)]
$$D^b(coh({\mathbb X})) \cong 
\begin{cases} 
D^b(\Mod_{f.d.}-\Bbbk \widetilde{A}_{p, q}) & {\rm{if}}\;\; {\bf p}=(p,q),\\
D^b(\Mod_{f.d.}-\Bbbk \widetilde{D}_n) & {\rm{if}}\;\; {\bf p}=(2,2,n),\\
D^b(\Mod_{f.d.}-\Bbbk \widetilde{E}_6) & {\rm{if}}\;\; {\bf p}=(2,3,3),\\
D^b(\Mod_{f.d.}-\Bbbk \widetilde{E}_7) & {\rm{if}}\;\; {\bf p}=(2,3,4),\\
D^b(\Mod_{f.d.}-\Bbbk \widetilde{E}_8) & {\rm{if}}\;\; {\bf p}=(2,3,5).
\end{cases}
$$
\item[(3)]
Let ${\mathcal S}$ be an ordinary simple 
object in $coh({\mathbb X})$.
Then $\Ext^1_{\mathbb X}({\mathcal S},{\mathcal S})=\Bbbk$.
\item[(4)]
$\fpdim^1 (coh({\mathbb X}))\geq 1$.
\item[(5)]
If ${\mathbb X}$ is tubular or domestic, then $\Ext^1_{\mathbb X}(X,Y)=0$ 
for all $X\in Vect_{\mu'}({\mathbb X})$ and $Y\in Vect_{\mu}({\mathbb X})$
with $\mu'< \mu$.
\item[(6)]
If ${\mathbb X}$ is domestic, then $\Ext^1_{\mathbb X}(X,Y)=0$ 
for all $X\in Vect_{\mu'}({\mathbb X})$ and $Y\in Vect_{\mu}({\mathbb X})$
with $\mu'\leq \mu<\infty$. As a consequence, 
$\fpdim(\Sigma\mid_{Vect_{\mu}({\mathbb X})})=0$
for all $\mu<\infty$.
\item[(7)]
Suppose ${\mathbb X}$ is tubular or domestic. 
Then every indecomposable vector bundle ${\mathbb X}$ 
is semi-stable.
\item[(8)]
Suppose ${\mathbb X}$ is tubular
and let $\mu\in {\mathbb Q}$. 
Then each $Vect_{\mu}({\mathbb X})$ is a uniserial category. 
Accordingly indecomposables in $Vect_{\mu}({\mathbb X})$ 
decomposes into Auslander-Reiten components, which all are 
stable tubes of finite rank. In fact, for every 
$\mu\in {\mathbb Q}$, $$Vect_{\mu}({\mathbb X})\cong 
Vect_{\infty}({\mathbb X})=Tor({\mathbb X}).$$ 
\end{enumerate}
\end{lemma}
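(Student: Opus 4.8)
The plan is to treat the eight parts in groups, since most are structural facts about weighted projective lines that I would either cite from \cite{GL, Sc, LM} or derive from Serre duality together with slope arithmetic. Parts (1) and (2) are foundational. For (1), the ${\mathbb L}$-graded algebra $S$ is a noetherian commutative algebra, so $\gr^{\mathbb L}$-$S$ is noetherian and the Serre quotient $coh({\mathbb X})$ inherits this; heredity is the statement that $coh({\mathbb X})$ has cohomological dimension one, which is one of the defining homological features of a weighted projective line established in \cite{GL}. For (2), I would exhibit in each of the five domestic cases a tilting object whose endomorphism algebra is the path algebra of the indicated extended Dynkin quiver; the derived equivalence then follows from tilting theory, exactly as in \cite{GL}.

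For (3) and (4) I would localize at an ordinary point. By the block decomposition \eqref{E2.0.1}, the torsion subcategory at an ordinary closed point $x$ is equivalent to the category of nilpotent representations of the Jordan quiver (one vertex, one loop) over $\Bbbk_x$. The unique simple object there corresponds to the one-dimensional representation, and the single loop forces $\Ext^1_{\mathbb X}({\mathcal S}, {\mathcal S}) = \Bbbk$, which is (3). Part (4) is then immediate: an ordinary simple ${\mathcal S}$ is a brick, so taking $\phi = \{{\mathcal S}\}$ the matrix $A(\phi, E^1)$ is the $1 \times 1$ matrix $(1)$, whose spectral radius is $1$, whence $\fpdim^1(coh({\mathbb X})) \geq 1$.

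Parts (5)--(7) rest on Serre duality. In the hereditary category $coh({\mathbb X})$ one has $\Ext^1_{\mathbb X}(X, Y) \cong \Hom_{\mathbb X}(Y, \tau X)^{*}$, where the Auslander--Reiten translate $\tau$ is the twist by the dualizing element $\vec{\omega}$. The key numerical input is that $\tau$ shifts slopes by the fixed amount $\gamma = \deg \vec{\omega}$, with $\gamma = 0$ in the tubular case and $\gamma < 0$ in the domestic case. For semistable objects, a nonzero morphism $Y \to \tau X$ forces $\mu(Y) \leq \mu(\tau X) = \mu(X) + \gamma$. For (5), $\mu(X) = \mu' < \mu = \mu(Y)$ together with $\gamma \leq 0$ gives $\mu(\tau X) = \mu' + \gamma \leq \mu' < \mu(Y)$, so $\Hom_{\mathbb X}(Y, \tau X) = 0$ and hence $\Ext^1 = 0$; for (6), the strict inequality $\gamma < 0$ in the domestic case upgrades this to the range $\mu' \leq \mu$. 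The stated consequence then follows because on $Vect_{\mu}({\mathbb X})$ with $\mu < \infty$ the adjacency matrices for $\Sigma$ have all entries $\dim \Ext^1_{\mathbb X}(X_i, X_j) = 0$, so $\rho = 0$ throughout. For (7), I would run a Harder--Narasimhan argument: if an indecomposable vector bundle $X$ were not semistable, its maximal destabilizing sub $X_1$ of slope $\mu_1$ would fit in a short exact sequence $0 \to X_1 \to X \to Q \to 0$ with every Harder--Narasimhan factor of $Q$ of slope $< \mu_1$; by (5), applied factor by factor and using $\Ext^2 = 0$ from heredity, the class in $\Ext^1_{\mathbb X}(Q, X_1)$ vanishes, so the sequence splits, contradicting indecomposability.

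The remaining structural statement (8) is the main obstacle and the part I expect to require genuine input from the tubular theory of \cite{LM, Sc} rather than a short self-contained argument. The crux is the existence of an autoequivalence of $D^b(coh({\mathbb X}))$ that acts transitively on the rational slopes and carries each $Vect_{\mu}({\mathbb X})$ onto $Vect_{\infty}({\mathbb X}) = Tor({\mathbb X})$; granting this, the uniserial structure and the decomposition into stable tubes of finite rank for $Tor({\mathbb X})$ recorded in \eqref{E2.0.1} transport to every $Vect_{\mu}({\mathbb X})$, yielding the equivalence $Vect_{\mu}({\mathbb X}) \cong Tor({\mathbb X})$. Making the slope arithmetic behind (5)--(6) and the tubular action behind (8) fully precise is where the real work lies; the homological and Frobenius-Perron bookkeeping in (3)--(4) and (7) is then routine.
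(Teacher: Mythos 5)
Your proposal is correct, but its route differs from the paper's in an instructive way: the paper's proof is almost entirely a list of citations --- (1) to \cite[Theorem 2.2]{Le}, (2) to \cite[Proposition 5.1(i)]{KLM} and \cite[5.4.1]{GL}, (5) and (6) to \cite[Corollary 4.34]{Sc} and the comments following it, (7) to \cite[Theorem 5.6(i)]{GL}, and (8) to \cite[Theorem 4.42]{Sc} and \cite[Theorem 5.6(iii)]{GL} --- and its only written-out argument is the observation for (3) and (4) that an ordinary simple ${\mathcal S}={\mathcal O}_x$ is a brick with $\Ext^1_{\mathbb X}({\mathcal S},{\mathcal S})=\Bbbk$. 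You instead reconstruct the mathematics behind those citations: your Serre-duality argument, $\Ext^1_{\mathbb X}(X,Y)\cong \Hom_{\mathbb X}(Y,\tau X)^{*}$ combined with the slope shift $\mu(\tau X)=\mu(X)+\delta(\overrightarrow{\omega})$, where $\delta(\overrightarrow{\omega})=0$ in the tubular case and $\delta(\overrightarrow{\omega})<0$ in the domestic case, is precisely the argument underlying \cite[Corollary 4.34]{Sc}, and your Harder--Narasimhan splitting argument deducing (7) from (5) is the standard proof of \cite[Theorem 5.6(i)]{GL}. What your version buys is self-containedness for (3)--(7), and it makes explicit the logical dependence of (7) on (5), which the citation-style proof conceals; what it costs is that you must still import from the literature the facts that $\tau$ preserves semistability, that Harder--Narasimhan filtrations exist, and that the maximal destabilizing subsheaf can be taken saturated (so that the quotient $Q$ is again a vector bundle), and for (1), (2) and (8) you defer to the literature exactly as the paper does --- unavoidably for (8), whose transitivity statement for autoequivalences on slopes has no short proof. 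Two minor points: in (7) the appeal to $\Ext^2$-vanishing is unnecessary, since for each step $0\to Q'\to Q\to F\to 0$ of the filtration the exact sequence $\Ext^1(F,X_1)\to \Ext^1(Q,X_1)\to \Ext^1(Q',X_1)$ already carries the induction; and in (5) the case $\mu=\infty$ (so $Y$ torsion) should be noted separately, where $\Hom_{\mathbb X}(Y,\tau X)=0$ holds simply because there are no nonzero maps from a torsion sheaf to a vector bundle, consistent with your slope convention.
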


\begin{proof} (1) This is well-known, see \cite[Theorem 2.2]{Le}

(2) See \cite[Proposition 5.1(i)]{KLM} and \cite[5.4.1]{GL}.

(3) Let ${\mathcal S}$ be an ordinary simple object which is of the form 
${\mathcal O}_x$ for some $x\in {\mathbb P}^1\setminus 
\{\lambda_0,\cdots,\lambda_t\}$. 
Then ${\mathcal S}$ is a brick and $\Ext^1_{\mathbb X}({\mathcal S},{\mathcal S})=
\Ext^1_{\mathbb X}({\mathcal O}_x, {\mathcal O}_x)=\Bbbk$. 

(4) Follows from (3) by taking $\phi:=\{{\mathcal S}\}$.

(5) This is \cite[Corollary 4.34(i)]{Sc}.

(6) This is \cite[Comments after Corollary 4.34]{Sc}. 
The consequence is clear.

(7) \cite[Theorem 5.6(i)]{GL}.

(8) See \cite[Theorem 4.42]{Sc} and \cite[Theorem 5.6(iii)]{GL}.
\end{proof}

Our main goal in this section is to prove Theorem \ref{xxthm0.1}.
Eventually one should ask the following question.

\begin{question} \cite[Question 7.11]{CG}
\label{xxque2.2}
Let ${\mathbb X}$  be a weighted projective line. What is 
the exact value of $\fpdim^n \; D^b(coh({\mathbb X}))$, for 
$n\geq 1$, in terms of other invariants of ${\mathbb X}$?
\end{question}

\subsection{Standard stable tubes \cite{LS, Ri}}
\label{xxsec2.2}
In this subsection we would like to understand the 
(standard) stable tubes in $Tor_{\lambda_i}$ in the 
decomposition \eqref{E2.0.1}, which is the Auslander-Reiten 
quiver of the $x$-nilpotent (or $x$-torsion) 
representations of the algebra in the following example.

\begin{example}
\label{xxex2.3}
Let $\xi$ be a primitive $n$th root of unity.
Let $T_n$ be the algebra 
$$T_n:=\frac{\Bbbk \langle g,x\rangle}{(g^n-1, gx-\xi xg)}.$$
This algebra can be expressed by using a group action.
Let $G$ be the group 
$$\{g\mid g^n=1\}\cong {\mathbb Z}/(n)$$ 
acting on the polynomial ring $\Bbbk[x]$ by $g\cdot x=\xi x$.
Then $T_n$ is naturally isomorphic to the skew group ring 
$\Bbbk[x]\ast G$. Let $\overrightarrow{A_{n-1}}$ denote the 
cycle quiver with $n$ vertices, namely, the quiver with one 
oriented cycle connecting $n$ vertices. It is also known that 
$T_n$ is isomorphic to the path algebra of the quiver 
$\overrightarrow{A_{n-1}}$.
\end{example}

Let ${\mathfrak A}$ be the category of finite dimensional
left $T_n$-modules that are $x$-torsion. 
In this subsection we will show that $\fpdim ({\mathfrak A})=1$
[Corollary \ref{xxcor2.12}]. We start with somewhat more 
general setting. 

Let $A$ be any algebra and let $\Mod_{f.d.}-A$ be the category 
of finite dimensional left $A$-modules. Let $\Gamma(\Mod_{f.d.}-A)$
denote the Auslander-Reiten quiver with Auslander-Reiten translation 
$\tau$.

Let $\mathfrak{C}$ be a component of $\Gamma(\Mod_{f.d.}-A)$. 
We say $\mathfrak{C}$ is a \emph{self-hereditary component} of 
$\Gamma(\Mod_{f.d.}-A)$ if for each pair of indecomposable 
$A$-modules $X$ and $Y$ in $\mathfrak{C}$, we have 
$\Ext^{2}_{A}(X, Y)=0$.

We now recall some results from the book \cite{SS}. The 
definitions can be found in \cite{SS}. Let 
$\phi:=\{E_{1}, \cdots ,E_{r}\}$ be a brick set in 
$\mathfrak{C}$. (In \cite{SS}, $\phi$ is called a finite 
family of pairwise orthogonal bricks.) The 
\emph{extension category} \cite[p.13]{SS} of $\phi$, denoted 
by $\mathcal{E}$, $\mathcal{E}_{A}$, or 
$\mathcal{EXT}_{A}(E_{1}, \cdots ,E_{r})$, is defined to be 
the full subcategory of $\Mod_{f.d.}-A$ whose nonzero objects 
are all the objects $M$ such that there exists a chain of
submodules 
$$M = M_{0} \supsetneq M_{1} \supsetneq \cdots \supsetneq M_{l}= 0,$$ 
for some $l\geq 1$, with $M_{i}/M_{i+1}$ isomorphic to one of the 
bricks $E_{1}, \cdots ,E_{r}$ for all $0\leq i <l$. We say 
$\{E_1, \cdots, E_r\}$ is a $\tau$-cycle if $\tau(E_i)=E_{i-1}$ 
for all $i\in {\mathbb Z}/(r)$.

We will be using the notation introduced in \cite{SS}. For example, 
$E_{i}[j]$ represents some uniserial object, which is 
nothing to do with the $j$th suspension of $E_{i}$.

\begin{theorem} \cite[Lemma 2.1 and Theorem 2.2 in Ch. X]{SS}
\label{xxthm2.4}
Let $\phi:=\{E_{1}, \cdots ,E_{r}\}$, with $r\geq 1$, 
be a brick set on $\Mod_{f.d.}-A$. Suppose that $\phi$ is a 
$\tau$-cycle and a self-hereditary family \cite[p.14]{SS}.
Then the extension category ${\mathcal E}$ is an abelian category 
with the following properties.
\begin{enumerate}
\item[(1)] 
For each pair $(i, j)$, with $1\leq i\leq r$ and $j\geq 1$, 
there exist a uniserial object $E_{i}[j]$ of $\mathcal{E}$-length 
$l_{\mathcal{E}}(E_{i}[j]) = j$ in the category $\mathcal{E}$, and
homomorphisms
$$u_{ij} : E_{i}[j-1]\longrightarrow E_{i}[j],\quad {\text{and}}
\quad 
p_{ij}: E_{i}[j]\longrightarrow E_{i+1}[j-1],$$
for $j\geq 2$, such that we have two short exact sequences in 
$\Mod_{f.d.}-A$
$$0\longrightarrow E_{i}[j-1]\xrightarrow{\;\; u_{ij}\;\; }E_{i}[j] 
\xrightarrow{\;\; p'_{ij}\;\; } E_{i+j-1}[1]\longrightarrow 0,$$
$$0\longrightarrow E_{i}[1]\xrightarrow{\;\; u'_{ij}\;\; } E_{i}[j] 
\xrightarrow{\;\; p_{ij}\;\; } E_{i+1}[j-1]\longrightarrow 0,$$
where $p'_{ij}=p_{i+j-2,2}\circ \cdots \circ p_{ij}$ and 
$u'_{ij}=u_{ij}\circ \cdots \circ u_{i2}$. 
Moreover,  for each $j\geq 2$, there exists an almost split sequence
$$0\rightarrow E_{i}[j-1]\xrightarrow{\left\lbrack
\begin{array}{c}
p_{i,j-1}\\
u_{ij}
\end{array}
\right\rbrack} E_{i+1}[j-2]\oplus E_{i}[j]\xrightarrow{(u_{i+1,j-1} 
\ p_{ij})} E_{i+1}[j-1]\rightarrow 0,$$
in $\Mod_{f.d.}-A$, where we set $E_{i}[0] = 0$ and $E_{i+kr}[m] = E_{i}[m]$, 
for $m\geq 1$ and all $k\in \mathbb{Z}$.
\item[(2)] 
The indecomposable uniserial objects $E_{i}[j]$, with 
$i \in \{1, \cdots , r\}$ and $j\geq 1$, of the category 
$\mathcal{E}$, connected by the homomorphisms $u_{ij}:
E_{i}[j-1]\rightarrow E_{i}[j]$ and 
$p_{ij}: E_{i}[j]\rightarrow E_{i+1}[j-1]$, form the infinite
diagram presented below.

\bigskip

\vspace*{12mm}

\bigskip

\begin{centering}
\adjustbox{scale=.7}{
{\tiny
\unitlength=1mm
\begin{picture}(140,60)(-15,0)
\put(4,4){\vector(1,1){6}}
\put(40,40){\vector(1,1){6}}
\put(52,52){\vector(1,1){6}}
\put(16,8){\vector(1,-1){6}}
\put(52,44){\vector(1,-1){6}}
\put(64,56){\vector(1,-1){6}}
\put(64,40){\vector(1,1){6}}
\put(76,44){\vector(1,-1){6}}
\put(76,4){\vector(1,1){6}}
\put(88,16){\vector(1,1){6}}
\put(88,8){\vector(1,-1){6}}
\put(100,20){\vector(1,-1){6}}
\put(100,4){\vector(1,1){6}}
\put(112,8){\vector(1,-1){6}}
\put(16,16){$\cdot$}
\put(18,18){$\cdot$}
\put(20,20){$\cdot$}
\put(28,28){$\cdot$}
\put(30,30){$\cdot$}
\put(32,32){$\cdot$}
\put(28,4){$\cdot$}
\put(30,6){$\cdot$}
\put(32,8){$\cdot$}
\put(52,28){$\cdot$}
\put(54,30){$\cdot$}
\put(56,32){$\cdot$}
\put(64,32){$\cdot$}
\put(66,30){$\cdot$}
\put(68,28){$\cdot$}
\put(76,28){$\cdot$}
\put(78,30){$\cdot$}
\put(80,32){$\cdot$}
\put(64,8){$\cdot$}
\put(66,6){$\cdot$}
\put(68,4){$\cdot$}
\put(76,20){$\cdot$}
\put(78,18){$\cdot$}
\put(80,16){$\cdot$}
\put(88,32){$\cdot$}
\put(90,30){$\cdot$}
\put(92,28){$\cdot$}
\put(0,0){$E_{i}[1]$}
\put(12,12){$E_{i}[2]$}
\put(36,36){$E_{i}[j-2]$}
\put(48,48){$E_{i}[j-1]$}
\put(60,60){$E_{i}[j]$}
\put(24,0){$E_{i+1}[1]$}
\put(60,36){$E_{i+1}[j-2]$}
\put(72,48){$E_{i+1}[j-1]$}
\put(84,36){$E_{i+2}[j-2]$}
\put(72,0){$E_{i+j-3}[1]$}
\put(84,12){$E_{i+j-3}[2]$}
\put(96,24){$E_{i+j-3}[3]$}
\put(96,0){$E_{i+j-2}[1]$}
\put(108,12){$E_{i+j-2}[2]$}
\put(120,0){$E_{i+j-1}[1]$}

\put(3,8){\tiny{$u_{i2}$}}
\put(20,6){\tiny{$p_{i2}$}}
\put(68,8){\tiny{$u_{i+j-3,2}$}}
\put(90,6){\tiny{$p_{i+j-3,2}$}}
\put(93,8){\tiny{$u_{i+j-2,2}$}}
\put(115,6){\tiny{$p_{i+j-2,2}$}}
\put(80,20){\tiny{$u_{i+j-3,3}$}}
\put(103,18){\tiny{$p_{i+j-3,3}$}}
\put(35,44){\tiny{$u_{i,j-1}$}}
\put(55,42){\tiny{$p_{i,j-1}$}}
\put(57,44){\tiny{$u_{i+1,j-1}$}}
\put(79,42){\tiny{$p_{i+1,j-1}$}}
\put(50,56){\tiny{$u_{i,j}$}}
\put(67,54){\tiny{$p_{i,j}$}}

\put(-8,8){\vector(1,-1){6}}
\put(-16,14){$\cdot$}
\put(-14,12){$\cdot$}
\put(-12,10){$\cdot$}

\put(4,20){\vector(1,-1){6}}
\put(-4,26){$\cdot$}
\put(-2,24){$\cdot$}
\put(0,22){$\cdot$}

\put(28,44){\vector(1,-1){6}}
\put(20,50){$\cdot$}
\put(22,48){$\cdot$}
\put(24,46){$\cdot$}

\put(40,56){\vector(1,-1){6}}
\put(32,62){$\cdot$}
\put(34,60){$\cdot$}
\put(36,58){$\cdot$}

\put(52,68){\vector(1,-1){6}}
\put(44,74){$\cdot$}
\put(46,72){$\cdot$}
\put(48,70){$\cdot$}

\put(125,4){\vector(1,1){6}}
\put(133,12){$\cdot$}
\put(135,14){$\cdot$}
\put(137,16){$\cdot$}

\put(113,16){\vector(1,1){6}}
\put(121,24){$\cdot$}
\put(123,26){$\cdot$}
\put(125,28){$\cdot$}

\put(89,40){\vector(1,1){6}}
\put(97,48){$\cdot$}
\put(99,50){$\cdot$}
\put(101,52){$\cdot$}

\put(77,52){\vector(1,1){6}}
\put(85,60){$\cdot$}
\put(87,62){$\cdot$}
\put(89,64){$\cdot$}

\put(65,64){\vector(1,1){6}}
\put(73,72){$\cdot$}
\put(75,74){$\cdot$}
\put(77,76){$\cdot$}

\put(53,0){$\cdot$}
\put(55,0){$\cdot$}
\put(57,0){$\cdot$}

\end{picture}
}
}
\end{centering}
\vspace*{1mm}
\hfill \break
\item[(3)] 
$\Ext^{2}_{A}(X, Y ) = 0$, for each pair of objects $X$ and 
$Y$ of $\mathcal{E}$.
\end{enumerate}
\end{theorem}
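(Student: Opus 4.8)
The plan is to build the whole category $\mathcal{E}$ by hand out of the ``mouth'' bricks $E_1,\dots,E_r$, using Auslander--Reiten theory to control every extension group that occurs. First I would compute the extensions among the $E_i$ themselves. Writing $D=\Hom_{\Bbbk}(-,\Bbbk)$ and invoking the Auslander--Reiten formula $\Ext^1_A(X,Y)\cong D\,\overline{\Hom}_A(Y,\tau X)$, note that because the $E_i$ form a $\tau$-cycle they are neither projective nor injective (each $\tau E_i=E_{i-1}$ and $\tau^{-1}E_i=E_{i+1}$ are defined and nonzero), so the stable $\overline{\Hom}$ agrees with ordinary $\Hom$ on them. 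The orthogonality of the brick set together with $\tau E_i=E_{i-1}$ then gives
$$\Ext^1_A(E_i,E_j)\cong D\,\Hom_A(E_j,E_{i-1})=\delta_{j,i-1}\,\Bbbk,$$
so the only nonzero extensions among the mouth bricks are the ``cyclic'' ones $\Ext^1_A(E_i,E_{i-1})=\Bbbk$ --- exactly the data needed to glue the $E_i$ into a tube.

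Next I would construct the objects $E_i[j]$ by induction on $j$, setting $E_i[1]:=E_i$ and defining $E_i[j]$ as the middle term of a nonsplit extension
$$0\to E_i[j-1]\to E_i[j]\to E_{i+j-1}\to 0$$
realizing a class in $\Ext^1_A(E_{i+j-1},E_i[j-1])$. The inductive heart is to show this extension group is one-dimensional: applying $\Hom_A(E_{i+j-1},-)$ to $0\to E_i[j-2]\to E_i[j-1]\to E_{i+j-2}\to 0$ and using Step~1 (the composition factors $E_i,\dots,E_{i+j-3}$ of $E_i[j-2]$ avoid the predecessor $E_{i+j-2}$ of $E_{i+j-1}$, so $\Ext^1_A(E_{i+j-1},E_i[j-2])=0$) together with the self-hereditary hypothesis $\Ext^2_A(E_i,E_j)=0$ (which, propagated by filtration, kills the obstruction term $\Ext^2_A(E_{i+j-1},E_i[j-2])$ and thus makes the long exact sequence truncate) yields $\Ext^1_A(E_{i+j-1},E_i[j-1])\cong\Bbbk$. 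The same induction shows $E_i[j]$ is uniserial with socle $E_i$ and top $E_{i+j-1}$ and socle series $E_i,E_{i+1},\dots,E_{i+j-1}$, and that it again lies in $\mathcal{E}$. The two short exact sequences of part~(1) then read off the two distinguished submodules of this uniserial object: its maximal submodule $E_i[j-1]$ (quotient $E_{i+j-1}$) and its socle $E_i$ (quotient $E_{i+1}[j-1]$), with $u_{ij},p_{ij}$ the induced inclusions and projections.

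With the uniserial objects and the connecting maps in hand, I would prove that $\mathcal{E}$ is abelian and classify its indecomposables by matching simples and first extensions with those of the cyclic quiver $\overrightarrow{A_{r-1}}$ (a single loop when $r=1$): the orthogonality and the one-dimensional cyclic extensions show $\mathcal{E}$ is equivalent to the category of finite-dimensional nilpotent representations of $\overrightarrow{A_{r-1}}$, which is abelian and uniserial with indecomposables precisely the $E_i[j]$; kernels and cokernels of maps between uniserial objects are again uniserial, so $\mathcal{E}$ is closed under them. To obtain the almost split sequences of part~(1) and the tube diagram of part~(2), I would verify that
$$0\to E_i[j-1]\to E_{i+1}[j-2]\oplus E_i[j]\to E_{i+1}[j-1]\to 0$$
is almost split in $\Mod_{f.d.}-A$: it is nonsplit with indecomposable end terms, $\tau\bigl(E_{i+1}[j-1]\bigr)=E_i[j-1]$ matches the left-hand term, and using the $\Hom$ and $\Ext^1$ computations among the $E_i[j]$ one checks that the right-hand map is minimal right almost split, so by uniqueness of almost split sequences this is the one ending at $E_{i+1}[j-1]$. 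Finally part~(3) follows by a filtration argument (d\'evissage): every object of $\mathcal{E}$ has a finite filtration with factors among the $E_i$, so the vanishing $\Ext^2_A(E_i,E_j)=0$ propagates through the long exact sequences to give $\Ext^2_A(X,Y)=0$ for all $X,Y\in\mathcal{E}$.

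I expect the main obstacle to be the simultaneous induction in the second step, namely proving at once that the class building $E_i[j]$ is essentially unique (the relevant $\Ext^1$ is exactly one-dimensional) and that the resulting module is genuinely \emph{uniserial} rather than merely indecomposable. This is precisely where the self-hereditary hypothesis is indispensable: without $\Ext^2$-vanishing the long exact $\Ext$-sequences would not truncate, and one could neither control $\Ext^1_A(E_{i+j-1},E_i[j-1])$ inductively nor guarantee that the middle terms remain inside $\mathcal{E}$. A secondary difficulty is upgrading the constructed nonsplit short exact sequences to \emph{almost split} sequences; this is handled by the uniqueness of almost split sequences together with the explicit morphism computations supplied by the uniserial structure.
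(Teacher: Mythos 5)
A preliminary remark on the comparison: the paper offers no proof of this statement at all --- Theorem \ref{xxthm2.4} is imported verbatim from \cite[Ch.~X, Lemma 2.1 and Theorem 2.2]{SS} --- so your sketch can only be measured against that standard argument, whose overall shape (inductive construction of the uniserial objects $E_i[j]$, the Auslander--Reiten formula, d\'evissage for the $\Ext^2$-vanishing) you do follow. However, the central inductive step of your proposal has a genuine gap. You justify $\Ext^1_A(E_{i+j-1},E_i[j-2])=0$ by asserting that the composition factors $E_i,\dots,E_{i+j-3}$ of $E_i[j-2]$ avoid $E_{i+j-2}=\tau E_{i+j-1}$. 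But the indices are cyclic modulo $r$, so this premise holds only for $j\leq r+1$: once $j\geq r+2$ the filtration of $E_i[j-2]$ wraps around the tube and contains \emph{every} mouth brick, in particular $E_{i+j-2}$, among its factors. At that point d\'evissage is powerless, since it only bounds $\Ext^1$ against a filtered object from above by the sum over the factors; it can prove vanishing only when every factor contributes zero, which is exactly what fails here. Worse, for $r=1$ the claimed vanishing is false, not merely unproven: $\Ext^1_A(E_1,E_1[j-2])\cong D\overline{\Hom}_A(E_1[j-2],\tau E_1)\cong\Bbbk$ because the top of $E_1[j-2]$ is $E_1=\tau E_1$ (in the model case of nilpotent $\Bbbk[x]$-modules, $\Ext^1(\Bbbk,\Bbbk[x]/(x^{j-2}))\cong\Bbbk$; compare also the all-ones $\Ext$-matrix of \eqref{E2.8.2} when $r=1$), and in addition $\Hom_A(E_{i+j-1},E_{i+j-2})=\Bbbk\neq 0$, so your long-exact-sequence bookkeeping already collapses at $j=3$. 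Since the theorem must produce $E_i[j]$ for \emph{all} $j\geq 1$, these are not boundary cases.

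The missing idea is that the induction has to carry Hom-information about the uniserial modules alongside the Ext-information: one proves simultaneously that every nonzero map from $E_i[m]$ to a mouth brick factors through the top $E_{i+m-1}$ (and dually for socles). With that in hand, the correct computation for $r\geq 2$ is not d\'evissage but the AR formula applied to the filtered module itself, $\Ext^1_A(E_{i+j-1},E_i[j-2])\cong D\overline{\Hom}_A(E_i[j-2],E_{i+j-2})=0$, which vanishes because the top $E_{i+j-3}$ is orthogonal to $E_{i+j-2}$; for $r=1$ one gets no vanishing and must instead observe that the connecting homomorphism $\Hom_A(E_1,E_1)\to\Ext^1_A(E_1,E_1[j-2])$ is nonzero (it sends the identity to the class of the defining nonsplit extension), which together with the $\Ext^2$-vanishing still forces $\Ext^1_A(E_1,E_1[j-1])\cong\Bbbk$. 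This is in substance how \cite{SS} organizes the proof. The rest of your outline is reasonable --- part (3) by d\'evissage is legitimate, since $\Ext^2$-vanishing propagates through filtrations with no cancellation issues, and the almost-split-sequence verification via uniqueness is standard --- though be aware that your claimed equivalence of $\mathcal{E}$ with nilpotent representations of the cyclic quiver also needs more than matching simples and $\Ext^1$-spaces; controlling the higher structure there is again where the self-hereditary hypothesis and the accumulated Hom-computations enter.
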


\bigskip

\begin{theorem} \cite[Theorem 2.6 in Ch. X]{SS}
\label{xxthm2.5}
Retain the hypothesis as in Theorem \ref{xxthm2.4}.
Then the abelian category $\mathcal{E}$ has the following 
properties.
\begin{enumerate}
\item[(1)] 
Every indecomposable object $M$ of the category $\mathcal{E}$ 
is uniserial and is of the form $M\cong E_{i}[j]$, where 
$i\in \{1, \cdots, r\}$ and $j\geq 1$.
\item[(2)] 
The collection of indecomposable objects forms a self-hereditary 
component, denoted by $\mathfrak{T}_{\mathcal{E}}$, of 
$\Gamma(\Mod_{f.d.}-A)$.
\item[(3)] 
The component $\mathfrak{T}_{\mathcal{E}}$ is a standard stable 
tube of rank $r$ \cite[Definition 1.1 in Ch. X]{SS}.
\item[(4)] 
The objects $E_{1}, \cdots ,E_{r}$ form the complete set of 
objects lying on the mouth \cite[Definition 1.2 in Ch. X]{SS} 
of the tube $\mathfrak{T}_{\mathcal{E}}$.
\end{enumerate}
\end{theorem}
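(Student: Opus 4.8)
The plan is to extract all four assertions from the structural data already assembled in Theorem \ref{xxthm2.4}, which has constructed the uniserial objects $E_i[j]$, the connecting maps $u_{ij},p_{ij}$, the two short exact sequences, the almost split sequences, the diagram of part (2), and the vanishing $\Ext^2_A(X,Y)=0$ on $\mathcal{E}$. Thus the remaining work is largely organizational: (1) show the $E_i[j]$ exhaust the indecomposables of $\mathcal{E}$; (2) show they constitute a full connected component of $\Gamma(\Mod_{f.d.}-A)$; (3) identify that component as a standard stable tube of rank $r$; and (4) locate the mouth.

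First I would pin down the simple objects of $\mathcal{E}$: by the definition of the extension category every object has a composition series with factors among the pairwise orthogonal bricks $E_1,\dots,E_r$, and each $E_i$ is simple in $\mathcal{E}$, so $E_1,\dots,E_r$ are precisely the simples. For the classification in (1) I would argue by induction on $\mathcal{E}$-length: the length-one indecomposables are exactly the $E_i=E_i[1]$, and the almost split sequences of Theorem \ref{xxthm2.4}(1) express every longer indecomposable appearing in the knitting as a middle term built from objects $E_{i'}[j']$. Because those sequences are genuine almost split sequences in $\Mod_{f.d.}-A$, the full subquiver on the vertices $E_i[j]$ is closed under $\tau^{\pm 1}$ and under irreducible maps; being connected (the diagram of Theorem \ref{xxthm2.4}(2) is connected), it is therefore a complete connected component $\mathfrak{T}_{\mathcal{E}}$ of $\Gamma(\Mod_{f.d.}-A)$. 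Since $\mathcal{E}$ is connected through its cyclic extension structure and contains the $E_i$, any indecomposable of $\mathcal{E}$ lies in this component, giving both (1) and (2); self-heredity of the component is exactly Theorem \ref{xxthm2.4}(3).

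For (3) I would observe that the translation $\tau E_i[j]=E_{i-1}[j]$ together with the periodicity $E_{i+r}[m]=E_i[m]$ identifies the underlying translation quiver of $\mathfrak{T}_{\mathcal{E}}$ with $\mathbb{Z}A_{\infty}/\langle \tau^{r}\rangle$, which is by definition a stable tube of rank $r$; stability holds because an almost split sequence exists at every vertex by Theorem \ref{xxthm2.4}(1), so the component contains no projective or injective objects. To prove it is \emph{standard} I would show that the full subcategory of $\mathcal{E}$ on the $E_i[j]$ is equivalent to the mesh category of $\mathbb{Z}A_{\infty}/\langle \tau^{r}\rangle$: the composites of the maps $u_{ij}$ and $p_{ij}$ span all morphism spaces, and the only relations among them are the mesh relations read off from the almost split sequences (equivalently, one may invoke that $\mathcal{E}$ is equivalent to the serial category of nilpotent representations of the cyclic quiver on $r$ vertices, whose tube is known to be standard). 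Finally (4) is immediate from (1): the objects of $\mathcal{E}$-length one are exactly $E_1,\dots,E_r$, and these are the quasi-simple objects at the mouth of the tube. I expect the genuine obstacle to be the classification step in (1) together with the standardness claim in (3) — verifying that no indecomposable outside the list occurs and that the morphism spaces carry no relations beyond the mesh relations; the uniseriality, the component property, and the mouth identification are then formal consequences of Theorem \ref{xxthm2.4}.
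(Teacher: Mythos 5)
First, a point of reference: the paper never proves this statement at all --- Theorem \ref{xxthm2.5}, like Theorem \ref{xxthm2.4} before it, is imported verbatim from \cite[Theorem 2.6 in Ch.~X]{SS}, so your proposal must be judged as a reconstruction of the textbook proof rather than compared with an argument in the paper. The organizational parts of your sketch are fine: the $E_i$ are exactly the simple objects of $\mathcal{E}$; the almost split sequences of Theorem \ref{xxthm2.4}(1) show that the set $\{E_i[j]\}$ is stable under $\tau^{\pm 1}$ and under irreducible maps, hence (being connected) constitutes a full component of $\Gamma(\Mod_{f.d.}-A)$ with the shape of a stable tube of rank $r$; self-heredity of that component is Theorem \ref{xxthm2.4}(3); and (4) follows once (1)--(3) are in place by noting which vertices have indecomposable almost-split middle term.

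The genuine gap is the one you flagged yourself, and the argument you offer does not close it. For (1) you write that ``since $\mathcal{E}$ is connected through its cyclic extension structure and contains the $E_i$, any indecomposable of $\mathcal{E}$ lies in this component.'' That is a non sequitur: an indecomposable module can admit many nonzero maps to and from the objects of an AR component without belonging to it (projective covers of the $E_i$, or preprojective modules mapping into a tube, do exactly this), and connectedness of $\mathcal{E}$ is a statement about extensions, not about irreducible maps. What you are inferring is precisely assertion (1), so the step is circular. A real proof must use the numerical hypotheses: since the $E_i$ are pairwise orthogonal bricks forming a $\tau$-cycle, the AR formula gives $\Ext^1_A(E_i,E_{i'}[j'])\cong D\Hom_A(E_{i'}[j'],\tau E_i)=D\Hom_A(E_{i'}[j'],E_{i-1})$, which is $\Bbbk$ when the top of $E_{i'}[j']$ is $E_{i-1}$ and $0$ otherwise. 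One then inducts on $\mathcal{E}$-length: write an indecomposable $M$ as an extension of its simple top $E_i$ by $L$, decompose $L$ by induction into objects $E_{i_k}[j_k]$, observe that indecomposability of $M$ forces the component of the extension class in each $\Ext^1_A(E_i,L_k)$ to be nonzero, and use the one-dimensionality of these spaces together with a pullback/Baer-sum analysis to show that two such summands would split $M$; hence $L$ is a single uniserial object with top $E_{i-1}$ and $M\cong E_{i'}[j'+1]$. Without this (or an equivalent factorization-through-sink-maps argument plus a vanishing statement for $\operatorname{rad}^{\infty}$), exhaustion of the indecomposables is simply not established. The same criticism applies to your treatment of standardness in (3): asserting that all morphisms are composites of the $u_{ij},p_{ij}$ subject only to the mesh relations is a restatement of standardness, not a proof of it --- in the source this is \cite[Corollary 2.7 in Ch.~X]{SS}, i.e.\ Corollary \ref{xxcor2.6} here, which is a \emph{consequence} of the theorem being proved --- and the alternative you mention, an equivalence of $\mathcal{E}$ with the nilpotent representations of the cyclic quiver, already presupposes (1).
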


Let ${\mathfrak{T}}$ be the $\mathfrak{T}_{\mathcal{E}}$ 
defined as in Theorem \ref{xxthm2.5}. Let 
$D:=\Hom_{\Bbbk}(-,k)$ be the usual $\Bbbk$-linear dual.

\begin{corollary}\cite[Corollary 2.7 in Ch. X]{SS}
\label{xxcor2.6}
Retain the hypothesis as in Theorem \ref{xxthm2.4}.
\begin{enumerate}
\item[(1)] 
The only homomorphisms between two indecomposable
modules in $\mathfrak{T}$ are $\Bbbk$-linear combinations 
of compositions of the homomorphisms $u_{ij} , p_{ij} $, and 
the identity homomorphisms, and they are only subject to the 
relations arising from the almost split sequences in 
Theorem \ref{xxthm2.4}(1).
\item[(2)] 
Given $i \in \{1, \cdots , r\} $ and $j\geq 1$, we have
\begin{enumerate}
\item[(2a)]
$\End_A (E_{i}[j])\cong \Bbbk[t]/(t^{m})$, for some $m\geq 1$,
\item[(2b)]
$\End_A (E_{i}[j])\cong \Bbbk $ if and only if $j\leq r$, and
\item[(2c)]
$\Ext^{1}_{A}(E_{i}[j],E_{i}[j])\cong D\Hom_{A}(E_{i}[j], 
\tau E_{i}[j]) = 0$ if and only if $j\leq r-1$.
\end{enumerate}
\item[(3)] 
If the tube $\mathfrak{T}$ is homogeneous {\rm{(}}namely, 
$r=1${\rm{)}}, then $\Ext^{1}_{A}(M,M)\neq 0$, for any 
indecomposable $M$ in $\mathcal{C}$.
\end{enumerate}
\end{corollary}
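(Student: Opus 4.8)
The plan is to derive all four parts from the structural description of the standard stable tube $\mathfrak{T}$ already obtained in Theorems \ref{xxthm2.4} and \ref{xxthm2.5}, together with the Auslander--Reiten formula. The key input is that, by Theorem \ref{xxthm2.5}(3), $\mathfrak{T}$ is a \emph{standard} stable tube, so the full subcategory of its indecomposables is equivalent to the mesh category of its Auslander--Reiten quiver. Concretely, every morphism between indecomposables $E_i[j]$ and $E_{i'}[j']$ is a $\Bbbk$-linear combination of compositions of the irreducible maps $u_{ij}$, $p_{ij}$ and the identity maps, subject exactly to the mesh relations read off from the almost split sequences of Theorem \ref{xxthm2.4}(1). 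Since this is precisely the assertion of part (1), I regard (1) as the very content of standardness and treat it as established.

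For parts (2a) and (2b) I would analyze morphism spaces through the uniserial structure of Theorem \ref{xxthm2.5}(1). Using the short exact sequences of Theorem \ref{xxthm2.4}(1) and induction on $j$, the composition factors of $E_i[j]$, read from top to socle, are $E_{i+j-1}[1], E_{i+j-2}[1], \dots, E_i[1]$; hence every quotient of $E_i[j]$ has the form $E_{i+j-k}[k]$ and every submodule has the form $E_i[k]$ for $1\le k\le j$. A nonzero endomorphism factors through its image, which is simultaneously a quotient and a submodule and so must satisfy $E_{i+j-k}[k]\cong E_i[k]$; matching cyclic indices modulo $r$ and quasi-lengths forces $k\equiv j\ (\bmod\ r)$ with $1\le k\le j$. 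The admissible values $k=j,\,j-r,\,j-2r,\dots$ number exactly $\lceil j/r\rceil=:m$, and the corresponding endomorphisms are the powers of the single radical ``rotation'' map $t$ with image $E_i[j-r]$. This yields $\End_A(E_i[j])\cong \Bbbk[t]/(t^m)$, which is (2a), and gives $\End_A(E_i[j])\cong\Bbbk$ exactly when $m=1$, i.e. $j\le r$, which is (2b).

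For (2c) I would invoke the Auslander--Reiten formula $\Ext^1_A(M,M)\cong D\overline{\Hom}_A(M,\tau M)$, where $\overline{\Hom}$ denotes homomorphisms modulo those factoring through injectives; because $\mathfrak{T}$ is a stable tube it contains no injective modules and the relevant maps do not factor through injectives, so $\overline{\Hom}_A=\Hom_A$ and one obtains the stated isomorphism $\Ext^1_A(E_i[j],E_i[j])\cong D\Hom_A(E_i[j],\tau E_i[j])$. From the almost split sequences of Theorem \ref{xxthm2.4}(1) one reads off $\tau E_i[j]=E_{i-1}[j]$, so it remains to decide when $\Hom_A(E_i[j],E_{i-1}[j])=0$. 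Repeating the image analysis of the previous paragraph for a map $E_i[j]\to E_{i-1}[j]$, a nonzero morphism forces a common image $E_{i+j-k}[k]\cong E_{i-1}[k]$ with $1\le k\le j$ and $k\equiv j+1\ (\bmod\ r)$; the smallest admissible $k$ is $(j\bmod r)+1$, which is $\le j$ if and only if $j\ge r$. Hence $\Hom_A(E_i[j],E_{i-1}[j])=0$ precisely when $j\le r-1$, proving (2c). Finally, part (3) is the specialization $r=1$ of (2c): then $j\le r-1=0$ is impossible for an indecomposable $E_1[j]$ with $j\ge 1$, so $\Ext^1_A(M,M)\neq 0$ for every indecomposable $M$ of $\mathfrak{T}$.

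The main obstacle is the bookkeeping in the second and third paragraphs: one must verify that the ``image $=$ quotient $=$ submodule'' matching genuinely produces nonzero morphisms (and captures all of them) rather than compositions killed by mesh relations, and that the rotation map $t$ really generates $\End_A(E_i[j])$ as a truncated polynomial algebra. Both points rest on standardness (part (1)); once the mesh-category description is in force, they reduce to the cyclic-index and quasi-length arithmetic indicated above, and I expect the verification to be routine though notation-heavy.
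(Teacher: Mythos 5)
There is nothing in the paper to compare your argument against: Corollary \ref{xxcor2.6} is quoted verbatim from Simson--Skowro{\' n}ski \cite[Corollary 2.7 in Ch. X]{SS}, exactly as Theorems \ref{xxthm2.4} and \ref{xxthm2.5} are, and the authors use it as a black box (part (2b) to identify the brick objects in the tube, and then \eqref{E2.8.2} downstream). What you have written is a reconstruction of the textbook's own derivation, and it is essentially sound: part (1) is indeed nothing but the standardness assertion of Theorem \ref{xxthm2.5}(3) (the indecomposables in the tube form the mesh category of its AR quiver), and your image analysis for (2a)--(2c) --- submodules $E_i[k]$, quotients $E_{i+j-k}[k]$, matching quasi-length and cyclic index modulo $r$ --- is the same arithmetic that the paper itself performs later, in mesh-category language, when it proves Theorem \ref{xxthm2.7} and Corollary \ref{xxcor2.8} from this corollary. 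So your route is consistent with how all the surrounding quoted results are established; it simply operates at the level of \cite{SS} rather than of this paper, which is unavoidable here.

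One step does need repair. In (2c) you justify replacing $\overline{\Hom}_A$ by $\Hom_A$ in the Auslander--Reiten formula $\Ext^1_A(M,M)\cong D\overline{\Hom}_A(M,\tau M)$ by saying that the stable tube contains no injective modules. That alone is not sufficient: a map $E_i[j]\to \tau E_i[j]$ could a priori factor through an injective module lying \emph{outside} the tube. The standard fix is to observe that any map factoring through some injective already factors through the injective envelope $E(E_i[j])$ (extend $E_i[j]\to I$ along the envelope embedding, using injectivity of $I$), and then to show that no nonzero composite $E_i[j]\to E(E_i[j])\to \tau E_i[j]$ exists; alternatively one can appeal to the self-hereditary hypothesis that is built into Theorem \ref{xxthm2.4} (vanishing of the relevant $\Ext^2$), which is how \cite{SS} converts the stable-Hom form of the AR formula into the full-Hom statement that the corollary asserts. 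With that point patched, and granting standardness for the bookkeeping you already flagged as routine, your proof is complete.
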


Brick objects in ${\mathfrak{T}}$ are determined by Corollary 
\ref{xxcor2.6}(2b). To work out all brick sets in 
${\mathfrak{T}}$, we need to understand the $\Hom$ between 
brick objects. Part (2) of the following theorem describes 
these $\Hom$s. 

\begin{theorem}
\label{xxthm2.7}
Let $\mathfrak{T}$ be a standard stable tube of rank $r$ as used 
in Theorem \ref{xxthm2.5} and Corollary {\rm{\ref{xxcor2.6}}}. 
Keep the notation as above and assume $1\leq i,j, i',j'\leq r$. 
Then the following hold.
\begin{enumerate}
\item[(1)] 
$\End_{\mathfrak{T}}(E_{i}[j])\cong \Bbbk$.
\item[(2)] 
$\Hom_{\mathfrak{T}}(E_{i}[j], E_{i'}[j'])\neq 0$ if and only if 
$(i',j')$ satisfies one of the following conditions:
\begin{enumerate}
\item[(2a)]
$i\leq i'\leq i+j-1$ and $i+j\leq i'+j'$,
\item[(2b)] 
$i'\leq i+j-1-r$ and $i+j\leq i'+j'+r$. Here, if $i+j-1-r< 1$, 
then $\{1\leq i'\leq i+j-1-r\}=\emptyset.$
\end{enumerate}
Moreover, if $\Hom_{\mathfrak{T}}(E_{i}[j], E_{i'}[j'])\neq 0$, then 
$\Hom_{\mathfrak{T}}(E_{i}[j], E_{i'}[j'])\cong \Bbbk.$
\end{enumerate}
\end{theorem}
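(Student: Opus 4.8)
The plan is to reduce everything to the uniserial structure of the objects $E_i[j]$ recorded in Theorems \ref{xxthm2.4} and \ref{xxthm2.5}, and then to a short congruence count. Part (1) is immediate: since $1\le j\le r$, Corollary \ref{xxcor2.6}(2b) gives $\End_A(E_i[j])\cong\Bbbk$, and as the homomorphisms in $\mathfrak{T}$ are computed in the ambient module category this is exactly $\End_{\mathfrak{T}}(E_i[j])$. For part (2) I would first extract the socle--top data of each $E_i[j]$ from the two short exact sequences in Theorem \ref{xxthm2.4}(1): the sequence $0\to E_i[1]\to E_i[j]\to E_{i+1}[j-1]\to 0$ exhibits $E_i=E_i[1]$ as the socle (simple, since $E_i[j]$ is uniserial by Theorem \ref{xxthm2.5}(1)), while $0\to E_i[j-1]\to E_i[j]\to E_{i+j-1}[1]\to 0$ exhibits $E_{i+j-1}$ as the top. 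By uniseriality the submodules of $E_i[j]$ are exactly the $E_i[k]$ with $0\le k\le j$, and the corresponding quotients are $E_i[j]/E_i[k]\cong E_{i+k}[j-k]$. Because $j\le r$, the composition factors $E_i,E_{i+1},\dots,E_{i+j-1}$ are pairwise non-isomorphic, so $E_i[j]$ is a brick; throughout I read indices modulo $r$ via the convention $E_{i+kr}[m]=E_i[m]$.

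The second step factors an arbitrary nonzero map through its image. Given $0\ne f\colon E_i[j]\to E_{i'}[j']$, its kernel is a submodule $E_i[k]$ of the source with $0\le k\le j-1$, and its image is simultaneously the quotient $E_{i+k}[j-k]$ and a length-$(j-k)$ submodule $E_{i'}[j-k]$ of the target. Matching these forces the congruence $i+k\equiv i'\pmod r$ together with $1\le j-k\le j'$, that is, $\max(0,j-j')\le k\le j-1$. Conversely each such $k$ produces a map, namely the canonical surjection onto $E_{i+k}[j-k]$ followed by the isomorphism onto the length-$(j-k)$ submodule of $E_{i'}[j']$; this isomorphism is unique up to a scalar because the objects involved are bricks, so $f$ is determined up to scalar by $k$. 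Moreover, maps attached to distinct values of $k$ are linearly independent: if $k_1<k_2$ then restricting to the submodule $E_i[k_2]$ annihilates the map with kernel $E_i[k_2]$ but not the one with kernel $E_i[k_1]$. Hence $\dim_{\Bbbk}\Hom_{\mathfrak{T}}(E_i[j],E_{i'}[j'])$ equals the number of integers $k$ in the window $[\max(0,j-j'),\,j-1]$ with $k\equiv i'-i\pmod r$. I would invoke Corollary \ref{xxcor2.6}(1) to guarantee there are no further maps beyond those assembled from the $u_{ij}$ and $p_{ij}$, so that this count is exact.

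The final step is the congruence bookkeeping. The window $[\max(0,j-j'),\,j-1]$ consists of $\min(j,j')\le r$ consecutive integers, hence contains at most one representative of any residue class modulo $r$; this already yields both the ``at most one-dimensional'' assertion and, together with existence, the ``moreover'' clause $\Hom_{\mathfrak{T}}\cong\Bbbk$. For existence, note that any admissible $k$ lies in $[0,r-1]$, so it is the unique representative of $i'-i$ in that range: if $i'\ge i$ then $k=i'-i$, and the window conditions $k\le j-1$ and $k\ge j-j'$ translate into $i\le i'\le i+j-1$ and $i+j\le i'+j'$, which is (2a); if $i'<i$ then $k=i'-i+r$, and the same inequalities become $i'\le i+j-1-r$ and $i+j\le i'+j'+r$, which is (2b). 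The two cases are mutually exclusive, consistent with the uniqueness of $k$. The main obstacle I anticipate is the exactness of the dimension count, i.e.\ that distinct kernels give linearly independent maps and that each image type contributes only a scalar; this is precisely where the brick hypothesis $j,j'\le r$ and the standardness of the tube (through Corollary \ref{xxcor2.6}(1)) are essential, after which the translation of the single window-congruence into conditions (2a) and (2b) is routine.
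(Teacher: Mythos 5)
Your proof is correct, and it takes a genuinely different route from the paper's. The paper argues via the \emph{standardness} of the tube: since $\mathfrak{T}$ is a mesh category, Corollary \ref{xxcor2.6}(1) says every morphism is a $\Bbbk$-linear combination of composites of the arrows $u_{ij}$, $p_{ij}$ modulo the mesh relations, and the proof reads off conditions (2a)--(2b) from that and then exhibits, in the two cases $j'<j$ and $j'\geq j$, an explicit composite of $u$'s and $p$'s generating each nonzero $\Hom$ space. You instead work inside the abelian extension category $\mathcal{E}$ of Theorems \ref{xxthm2.4} and \ref{xxthm2.5}: uniseriality pins down all subobjects $E_i[k]$ and quotients $E_{i+k}[j-k]$, every nonzero map factors through its image (a quotient of the source matched against the unique subobject of the target of the same length), and the isomorphism $E_{i+k}[j-k]\cong E_{i'}[j-k]$ forces the congruence $k\equiv i'-i \pmod r$ inside the window $[\max(0,j-j'),\,j-1]$ of $\min(j,j')\leq r$ consecutive integers, whence at most one valid $k$ and $\dim\Hom\leq 1$; existence plus the case split $i'\geq i$ versus $i'<i$ recovers (2a) and (2b). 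Your approach buys a cleaner conceptual explanation of one-dimensionality (a counting/congruence argument rather than an inspection of mesh composites), and it actually makes your appeal to Corollary \ref{xxcor2.6}(1) superfluous: once each nonzero map is determined up to scalar by its kernel (using that the image is a brick, since its length is at most $r$), no input about generation by $u$'s and $p$'s is needed. What the paper's route buys in exchange is an explicit presentation of a generator of each $\Hom$ space as a composite of irreducible maps, which is the form in which the data feeds into Corollary \ref{xxcor2.8}. Two points you leave implicit but which are harmless: the identification $E_a[b]\cong E_c[d]$ iff $b=d$ and $a\equiv c \pmod r$ (it follows from socle-plus-length considerations in $\mathcal{E}$), and the fact that $\Hom_{\mathfrak{T}}$, $\Hom_{\mathcal{E}}$, and $\Hom_A$ agree because $\mathcal{E}$ is a full subcategory and its abelian structure is compatible with the module category.
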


\begin{proof}
(1) See Corollary \ref{xxcor2.6}(2).

(2) Since  $\mathfrak{T}$ is a standard stable tube, it is a mesh category
\cite[Definition 2.4 in Ch. X]{SS}. 
Moreover, by Corollary \ref{xxcor2.6}(a), the only homomorphisms between 
two indecomposable modules in $\mathfrak{T}$ are $\Bbbk$-linear combinations 
of compositions of the homomorphisms $u_{ij} , p_{ij} $, and the identity 
homomorphisms, which subject to the relations arising from the almost split 
sequences in Theorem \ref{xxthm2.4}(a). By mesh relationship 
\cite[Definition 2.4 in Ch. X]{SS}, we 
obtain the description (2a) and (2b) for all objects $E_{i'}[j']$ that satisfy 
$\Hom_{\mathfrak{T}}(E_{i}[j], E_{i'}[j'])\neq 0$.

Moreover, if $j'< j$, $\Hom_{\mathfrak{T}}(E_{i}[j], E_{i'}[j'])$ is generated 
by composition morphisms 
{\small $$u_{i',j'}\cdots u_{i',i+j-i'+1+l} \cdots u_{i',i+j-i'+2}u_{i',i+j-i'+1} 
p_{i'-1,i+j-i'+1} \cdots p_{i+k,j-k}\cdots p_{i+1,j-1}p_{ij},$$} 
\noindent
where $0\leq k\leq i'-i, 0\leq l\leq (i'+j')-(i+j)-1$. Here, if $i+k> r$, 
then the index $i+k$ means $i+k-r$.

If $j'\geq j$, $\Hom_{\mathfrak{T}}(E_{i}[j], E_{i'}[j'])$ is generated by
{\small $$p_{i'-1,j'+1} \cdots p_{i+l,i'+j'-i-l} \cdots p_{i+1,i'+j'-i-1} 
p_{i,i'+j'-i}u_{i,i'+j'-i}\cdots u_{i,j+k} \cdots u_{i,j+2}u_{i,j+1} , $$}
\noindent
where $0\leq k\leq (i'+j')-(i+j), 0 \leq l\leq i'-i-1.$ Here, if $i+l> r$, 
then the index $i+l$ means $i+l-r$. Therefore 
$\Hom_{\mathfrak{T}}(E_{i}[j], E_{i'}[j'])\cong \Bbbk$. 
\end{proof}

Part (1) of Corollary \ref{xxcor2.8} next is just a re-interpretation of
Theorem \ref{xxthm2.7}(2). 

\begin{corollary}
\label{xxcor2.8}
Let $\mathfrak{T}$ be a standard stable tube of rank $r$. Keep the notation 
as above and put $E_{i}[j], 1\leq i,j\leq r$ in order
{\small
$$E_{1}[1], \; E_{2}[1], \; \cdots, \; E_{r}[1]; \; E_{1}[2], 
\; E_{2}[2], \; \cdots, \; E_{r}[2]; \; 
\cdots; \; E_{1}[r], \; E_{2}[r], \; \cdots, \; E_{r}[r],$$
}
and denote them by $X_1,\cdots, X_n$ where $n=r^{2}$. 
\begin{enumerate}
\item[(1)]
The $n\times n$ matrix 
$$\left( \dim \Hom_{\mathfrak{T}}(X_j, X_i) \right) _{n\times n}$$
has the following form
\begin{equation}
\label{E2.8.1}\tag{E2.8.1}
\begin{pmatrix}
P^{0} & P^{1}&P^{2}&P^{3}&\cdots& P^{r-1}\\
P^{0} & \sum\limits_{i=0}^{1}P^{i}&\sum\limits_{i=1}^{2}P^{i}
&\sum\limits_{i=2}^{3}P^{i}&\cdots& \sum\limits_{i=r-2}^{r-1}P^{i}\\
P^{0} & \sum\limits_{i=0}^{1}P^{i}&\sum\limits_{i=0}^{2}P^{i}
&\sum\limits_{i=1}^{3}P^{i}&\cdots& \sum\limits_{i=r-3}^{r-1}P^{i}\\
P^{0} & \sum\limits_{i=0}^{1}P^{i}&\sum\limits_{i=0}^{2}P^{i}
&\sum\limits_{i=0}^{3}P^{i}&\cdots& \sum\limits_{i=r-4}^{r-1}P^{i}\\
&\cdots&\cdots&\cdots&\cdots&\\
P^{0} & \sum\limits_{i=0}^{1}P^{i}&\sum\limits_{i=0}^{2}P^{i}
&\sum\limits_{i=0}^{3}P^{i}&\cdots& \sum\limits_{i=0}^{r-1}P^{i}\\
\end{pmatrix}
\end{equation}
where 
$$P=
\begin{pmatrix}
0& 0&0&0&0& 1\\
1& 0&0&0&0& 0\\
0& 1&0&0&0& 0\\
0& 0&1&0&0& 0\\
&\cdots&\cdots&\cdots&\cdots&\\
0& 0&0&0&1&0\\
\end{pmatrix}
_{r\times r}
$$
and where $P^{0}$ is the identity matrix $I_{r\times r}$ of order $r$.
\item[(2)]
The $n\times n$-matrix
$$\left( \dim \Ext^1_{\mathfrak{T}}(X_j, X_i) \right)_{n\times n}
$$
has the following form
\begin{equation}
\label{E2.8.2}\tag{E2.8.2}
\begin{pmatrix}
P^{r-1} & P^{r-1}&P^{r-1}&P^{r-1}&\cdots& P^{r-1}\\
P^{r-2} & \sum\limits_{i=r-2}^{r-1}P^{i}
&\sum\limits_{i=r-2}^{r-1}P^{i}&\sum\limits_{i=r-2}^{r-1}P^{i}
&\cdots& \sum\limits_{i=r-2}^{r-1}P^{i}\\
P^{r-3} & \sum\limits_{i=r-3}^{r-2}P^{i}
&\sum\limits_{i=r-3}^{r-1}P^{i}&\sum\limits_{i=r-3}^{r-1}P^{i}
&\cdots& \sum\limits_{i=r-3}^{r-1}P^{i}\\
P^{r-4} & \sum\limits_{i=r-4}^{r-3}P^{i}
&\sum\limits_{i=r-4}^{r-2}P^{i}&\sum\limits_{i=r-4}^{r-1}P^{i}
&\cdots& \sum\limits_{i=r-4}^{r-1}P^{i}\\
&\cdots&\cdots&\cdots&\cdots&\\
P^{0} & \sum\limits_{i=0}^{1}P^{i}
&\sum\limits_{i=0}^{2}P^{i}&\sum\limits_{i=0}^{3}P^{i}
&\cdots& \sum\limits_{i=0}^{r-1}P^{i}\\
\end{pmatrix}
.
\end{equation}
\end{enumerate}
\end{corollary}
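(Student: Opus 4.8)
The plan is to read both matrices straight off the combinatorial $\Hom$-description in Theorem \ref{xxthm2.7}, after organizing the $r^2$ objects into the block pattern forced by the chosen ordering. Write $X_{(j-1)r+i}=E_i[j]$, so that the full matrix is partitioned into $r\times r$ blocks indexed by a pair $(j,j')$ of quasi-lengths, the $(i,i')$-entry of the $(j,j')$-block being $\dim\Hom_{\mathfrak{T}}(E_{i'}[j'],E_i[j])$ for part (1). Two observations drive everything. First, by Theorem \ref{xxthm2.7} every such entry is $0$ or $1$, so each block is a $0$--$1$ matrix. Second, the cyclic permutation matrix $P$ records a shift of the first index: $(P^s)_{i,i'}=1$ exactly when $i\equiv i'+s\pmod r$ with representatives in $\{1,\dots,r\}$, and $P^0=I$. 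Thus proving (1) amounts to showing that the set of shifts $\delta:=(i-i')\bmod r$ for which the entry is nonzero is precisely the contiguous cyclic interval $\{\max(0,j'-j),\dots,j'-1\}$, which is exactly the block $\sum_{s=\max(0,j'-j)}^{j'-1}P^s$ displayed in \eqref{E2.8.1}.

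To verify this I would substitute source $E_{i'}[j']$ and target $E_i[j]$ into the two alternatives of Theorem \ref{xxthm2.7}(2). Condition (2a), the non-wrapping case, reads $i'\le i\le i'+j'-1$ together with $i+j\ge i'+j'$; in terms of $\delta=i-i'\ge 0$ this contributes exactly the non-wrapping residues in $\{\max(0,j'-j),\dots,j'-1\}$. Condition (2b), governed by $i\le i'+j'-1-r$ and $i+j+r\ge i'+j'$, supplies precisely the wrapped residues (those with $i<i'$, where $\delta=i-i'+r$) needed to complete the same interval cyclically. Combining the two alternatives yields the single interval, so the block equals $\sum_{s=\max(0,j'-j)}^{j'-1}P^s$. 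As a check, the diagonal block $(j,j)$ gives $\{0,\dots,j-1\}$, hence $\sum_{s=0}^{j-1}P^s$, and the first block-row ($j=1$) gives the single power $P^{j'-1}$, matching \eqref{E2.8.1}.

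For part (2) I would reduce to part (1) via Auslander--Reiten--Serre duality. As $\mathfrak{T}$ is self-hereditary ($\Ext^2_A=0$ by Theorem \ref{xxthm2.4}(3)), the formula $\Ext^1_A(X,Y)\cong D\Hom_A(Y,\tau X)$ applies, with $D=\Hom_{\Bbbk}(-,\Bbbk)$ and $\tau$ the Auslander--Reiten translation; in a stable tube of rank $r$ the translation rotates the mouth, $\tau E_{i'}[j']=E_{i'-1}[j']$. Hence
\[
\dim\Ext^1_{\mathfrak{T}}(E_{i'}[j'],E_i[j])=\dim\Hom_{\mathfrak{T}}(E_i[j],E_{i'-1}[j']),
\]
and I would evaluate the right-hand side with the nonvanishing criterion of part (1), now read with source $E_i[j]$ and target $E_{i'-1}[j']$. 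Writing $\delta=(i-i')\bmod r$ for the shift governing the Ext block, the criterion becomes $r-1-\delta\in\{\max(0,j-j'),\dots,j-1\}$, i.e. $\delta\in\{r-j,\dots,r-1-\max(0,j-j')\}$; summing the corresponding powers of $P$ produces the block $\sum_{s=r-j}^{\min(r-1,\,r-j+j'-1)}P^s$ recorded in \eqref{E2.8.2}. As a consistency check, on the diagonal block $(j,j)$ the interval is $\{r-j,\dots,r-1\}$ and the self-extension $\Ext^1(E_i[j],E_i[j])$ sits at $\delta=0$, so it vanishes exactly when $j\le r-1$, in agreement with Corollary \ref{xxcor2.6}(2c).

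I expect the principal obstacle to lie in the modular index bookkeeping of part (1): one must check that the alternatives (2a) and (2b) of Theorem \ref{xxthm2.7}(2) contribute disjointly and jointly fill the single cyclic interval of shifts, with the empty-wraparound case $i+j-1-r<1$ and the boundary blocks (the first block-row, the first block-column, and the diagonal) all subsumed without gaps or double counting. Once the $\Hom$ blocks are pinned down, part (2) follows formally from Serre duality together with the rotation action of $\tau$.
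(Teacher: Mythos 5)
Your proposal is correct and takes essentially the same route as the paper: part (1) is read off from the nonvanishing criterion of Theorem \ref{xxthm2.7}(2), and part (2) is deduced from part (1) via the Auslander--Reiten/Serre duality $\Ext^1_{\mathfrak T}(X,Y)\cong D\Hom_{\mathfrak T}(Y,\tau X)$ together with $\tau E_i[j]=E_{i-1}[j]$, which is exactly the paper's argument. The only difference is one of detail: the paper states both steps in two lines and explicitly omits the entry-matching, whereas you carry out the modular index bookkeeping (the cyclic-interval description of each block and the disjoint covering by cases (2a) and (2b)) in full, and your computations agree with \eqref{E2.8.1} and \eqref{E2.8.2}.
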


\begin{proof}
(1) This follows from Theorem \ref{xxthm2.7}(2).

(2) The assertion follows from part (1) and the Serre duality
$$\Ext^{1}_{\mathfrak{T}}(E_{i}[j],E_{i'}[j'])\cong 
D\Hom_{\mathfrak{T}}(E_{i'}[j'], 
\tau E_{i}[j])=D\Hom_{\mathfrak{T}}(E_{i'}[j'], 
E_{i-1}[j]).$$
Some detailed matching of entries is omitted.
\end{proof}

We use the following example to illustrate the results in Corollary 
\ref{xxcor2.8}.

\begin{example}
\label{xxex2.9}
Let $\mathfrak{T}$ be a standard stable tube of rank $3$: 

\unitlength=1mm
\begin{picture}(140,60)(-15,0)
\put(4,4){\vector(1,1){6}}
\put(28,4){\vector(1,1){6}}
\put(52,4){\vector(1,1){6}}
\put(16,8){\vector(1,-1){6}}
\put(40,8){\vector(1,-1){6}}
\put(64,8){\vector(1,-1){6}}

\put(4,20){\vector(1,-1){6}}
\put(16,16){\vector(1,1){6}}
\put(28,20){\vector(1,-1){6}}
\put(40,16){\vector(1,1){6}}
\put(52,20){\vector(1,-1){6}}
\put(64,16){\vector(1,1){6}}

\put(4,28){\vector(1,1){6}}
\put(28,28){\vector(1,1){6}}
\put(52,28){\vector(1,1){6}}
\put(16,32){\vector(1,-1){6}}
\put(40,32){\vector(1,-1){6}}
\put(64,32){\vector(1,-1){6}}

\put(4,44){\vector(1,-1){6}}
\put(16,40){\vector(1,1){6}}
\put(28,44){\vector(1,-1){6}}
\put(40,40){\vector(1,1){6}}
\put(52,44){\vector(1,-1){6}}
\put(64,40){\vector(1,1){6}}

\put(0,0){$E_{1}[1]$}
\put(24,0){$E_{2}[1]$}
\put(48,0){$E_{3}[1]$}
\put(72,0){$E_{1}[1]$}
\put(12,12){$E_{1}[2]$}
\put(36,12){$E_{2}[2]$}
\put(60,12){$E_{3}[2]$}
\put(0,24){$E_{3}[3]$}
\put(24,24){$E_{1}[3]$}
\put(48,24){$E_{2}[3]$}
\put(72,24){$E_{3}[3]$}
\put(12,36){$E_{3}[4]$}
\put(36,36){$E_{1}[4]$}
\put(60,36){$E_{2}[4]$}

\put(2,4){$\vdots$}
\put(2,8){$\vdots$}
\put(2,12){$\vdots$}
\put(2,16){$\vdots$}
\put(2,20){$\vdots$}
\put(2,28){$\vdots$}
\put(2,32){$\vdots$}
\put(2,36){$\vdots$}
\put(2,40){$\vdots$}
\put(2,44){$\vdots$}
\put(2,48){$\vdots$}

\put(72,4){$\vdots$}
\put(72,8){$\vdots$}
\put(72,12){$\vdots$}
\put(72,16){$\vdots$}
\put(72,20){$\vdots$}
\put(72,28){$\vdots$}
\put(72,32){$\vdots$}
\put(72,36){$\vdots$}
\put(72,40){$\vdots$}
\put(72,44){$\vdots$}
\put(72,48){$\vdots$}

\put(10,48){$\cdots$}
\put(34,48){$\cdots$}
\put(58,48){$\cdots$}
\end{picture}
\vspace*{5mm}
\hfill \break

Put $E_{i}[j], 1\leq i,j\leq 3$ in order 
$$E_{1}[1],E_{2}[1], E_{3}[1]; E_{1}[2], E_{2}[2], 
E_{3}[2]; E_{1}[3], E_{2}[3], E_{3}[3].$$ 
Denote this list as $X_1,\cdots, X_9$. Then we have

{\tiny 
\begin{table}[h]
\caption{$\Hom_{\mathfrak{T}}(X_j,X_i)$}
\begin{tabular}{|c|c|c|c|c|c|c|c|c|c|}
  \hline
  $\Hom_{\mathfrak{T}}(X_j,X_i)$ & 
	$X_j=E_{1}[1]$& $E_{2}[1]$& $E_{3}[1]$& 
	$E_{1}[2]$& $E_{2}[2]$& $E_{3}[2]$& 
	$E_{1}[3]$& $E_{2}[3]$&$ E_{3}[3]$ \\
  \hline
$ X_i=E_{1}[1]$&$\Bbbk$&0&0&0&0&$\Bbbk$&0&$\Bbbk$&0\\
  \hline
  $E_{2}[1]$&0&$\Bbbk$&0&$\Bbbk$&0&0&0&0&$\Bbbk$  \\
  \hline
  $ E_{3}[1]$&0&0&$\Bbbk$&0&$\Bbbk$&0&$\Bbbk$&0&0  \\
  \hline
    $E_{1}[2]$&$\Bbbk$&0&0&$\Bbbk$&0&$\Bbbk$&0&$\Bbbk$&$\Bbbk$ \\
  \hline
  $E_{2}[2]$&0&$\Bbbk$&0&$\Bbbk$&$\Bbbk$&0&$\Bbbk$&0&$\Bbbk$\\
  \hline
  $E_{3}[2]$&0&0&$\Bbbk$&0&$\Bbbk$&$\Bbbk$&$\Bbbk$&$\Bbbk$&0\\
  \hline
  $E_{1}[3]$&$\Bbbk$&0&0&$\Bbbk$&0&$\Bbbk$&$\Bbbk$&$\Bbbk$&$\Bbbk$\\ 
   \hline
  $E_{2}[3]$&0&$\Bbbk$&0&$\Bbbk$&$\Bbbk$&0&$\Bbbk$&$\Bbbk$&$\Bbbk$\\
   \hline
  $E_{3}[3]$&0&0&$\Bbbk$&0&$\Bbbk$&$\Bbbk$&$\Bbbk$&$\Bbbk$&$\Bbbk$\\
  \hline
\end{tabular}
\end{table}
}

{\tiny 
\begin{table}[h]
\caption{$\Ext^{1}_{\mathfrak{T}}(X_j,X_i)$}
\begin{tabular}{|c|c|c|c|c|c|c|c|c|c|}
  \hline
  $\Ext^{1}_{\mathfrak{T}}(X_j,X_i)$ & 
	$X_j=E_{1}[1]$& $E_{2}[1]$& $E_{3}[1]$
	& $E_{1}[2]$& $E_{2}[2]$& $E_{3}[2]$
	& $E_{1}[3]$& $E_{2}[3]$&$ E_{3}[3]$ \\
  \hline
$ X_i=E_{1}[1]$&0&$\Bbbk$&0&0&$\Bbbk$&0&0&$\Bbbk$&0\\
  \hline
  $E_{2}[1]$&0&0&$\Bbbk$&0&0&$\Bbbk$&0&0&$\Bbbk$  \\
  \hline
  $ E_{3}[1]$&$\Bbbk$&0&0&$\Bbbk$&0&0&$\Bbbk$&0&0  \\
  \hline
    $E_{1}[2]$&0&0&$\Bbbk$&0&$\Bbbk$&$\Bbbk$&0&$\Bbbk$&$\Bbbk$ \\
  \hline
  $E_{2}[2]$&$\Bbbk$&0&0&$\Bbbk$&0&$\Bbbk$&$\Bbbk$&0&$\Bbbk$\\
  \hline
  $E_{3}[2]$&0&$\Bbbk$&0&$\Bbbk$&$\Bbbk$&0&$\Bbbk$&$\Bbbk$&0\\
  \hline
  $E_{1}[3]$&$\Bbbk$&0&0&$\Bbbk$&0&$\Bbbk$&$\Bbbk$&$\Bbbk$&$\Bbbk$\\
   \hline
  $E_{2}[3]$&0&$\Bbbk$&0&$\Bbbk$&$\Bbbk$&0&$\Bbbk$&$\Bbbk$&$\Bbbk$\\
   \hline
  $E_{3}[3]$&0&0&$\Bbbk$&0&$\Bbbk$&$\Bbbk$&$\Bbbk$&$\Bbbk$&$\Bbbk$\\
  \hline
\end{tabular}
\end{table}
}

The corresponding $\Hom$-dimension and $\Ext^1$-dimension matrices are 
$$\left( \dim \Hom_{\mathfrak{T}}(X_j,X_i) \right)_{9\times 9}
=
\begin{pmatrix}
P^{0} & P^{1}&P^{2}\\
P^{0} & \sum\limits_{i=0}^{1}P^{i}&\sum\limits_{i=1}^{2}P^{i}\\
P^{0} & \sum\limits_{i=0}^{1}P^{i}&\sum\limits_{i=0}^{2}P^{i}
\end{pmatrix}
 $$
and 
$$
\left(\dim \Ext^1_{\mathfrak{T}}(X_j,X_i) \right)_{9\times 9} 
=
\begin{pmatrix}
P^{2} & P^{2}&P^{2}\\
P^{1} & \sum\limits_{i=1}^{2}P^{i}&\sum\limits_{i=1}^{2}P^{i}\\
P^{0} & \sum\limits_{i=0}^{1}P^{i}&\sum\limits_{i=0}^{2}P^{i}
\end{pmatrix}
$$
where 
$$P=
\begin{pmatrix}
0 & 0&1\\
1 & 0&0\\
0 & 1&0
\end{pmatrix}.$$
\end{example}

The next lemma shows that if $H(i_1,\cdots,i_s)$ is a principal 
submatrix of \eqref{E2.8.1} such that $H(i_1,\cdots, i_s)=I_{s\times s}$,
then the corresponding principal 
submatrix of \eqref{E2.8.2} (with the same rows and columns) 
has spectral radius at most 1. 

\begin{lemma}[Lemma \ref{xxlem6.4}]
\label{xxlem2.10}
Retain the above notation.
Suppose the $\Hom$-matrix is given as in \eqref{E2.8.1} and
$\Ext^1$-matrix is given as in \eqref{E2.8.2}, then 
$\rho(A(\phi))\leq 1$ for every brick set $\phi$.
\end{lemma}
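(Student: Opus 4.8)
The plan is to understand which principal submatrices of the $\Hom$-matrix \eqref{E2.8.1} can equal an identity matrix, since a brick set $\phi$ is precisely a collection $\{X_{i_1},\dots,X_{i_s}\}$ of pairwise orthogonal bricks, i.e. a subset of the indices for which the corresponding principal submatrix of \eqref{E2.8.1} is $I_{s\times s}$. By Theorem \ref{xxthm2.7}(2), the condition $\Hom_{\mathfrak{T}}(E_i[j],E_{i'}[j'])=0$ and $\Hom_{\mathfrak{T}}(E_{i'}[j'],E_i[j])=0$ translates into explicit arithmetic constraints on the four indices $(i,j,i',j')$ modulo $r$. So the first step is to read off, from the two conditions (2a) and (2b) applied in both directions, exactly what it means for two of the objects $E_i[j]$, $E_{i'}[j']$ (with $1\le i,j,i',j'\le r$) to be mutually $\Hom$-orthogonal.

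\textbf{Reducing to the combinatorics of the index set.}
Using the $P$-matrix description, each $P^a$ is a cyclic permutation matrix, and the $\Hom$-matrix \eqref{E2.8.1} has block $(u,v)$ entry equal to $\sum_{i} P^{i}$ for a range of exponents depending on $\min(u,v)$ and $v$. The key observation to extract is that the brick-orthogonality constraint forces the chosen objects to lie in ``disjoint arcs'' of the tube: for two objects at tube-levels $j$ and $j'$ to be mutually orthogonal, their ``supports'' (the intervals $[i,i+j-1]$ and $[i',i'+j'-1]$ read cyclically mod $r$) must be disjoint as subsets of $\mathbb{Z}/(r)$. I would make this precise, identifying each admissible $X_{i_k}=E_{i_k}[j_k]$ with the cyclic interval $I_k:=\{i_k,i_k+1,\dots,i_k+j_k-1\}\subseteq \mathbb{Z}/(r)$, and show that $\phi$ being a brick set is equivalent to the $I_k$ being pairwise disjoint. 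Since $\sum_k |I_k|=\sum_k j_k \le r$, there can be at most $r$ such intervals, and their total length is bounded by $r$.

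\textbf{Bounding the spectral radius of the $\Ext^1$-submatrix.}
Having characterized brick sets, I would then examine the corresponding principal submatrix $B$ of the $\Ext^1$-matrix \eqref{E2.8.2}. Because of the Serre-duality relation used in Corollary \ref{xxcor2.8}(2), each entry $\dim\Ext^1_{\mathfrak{T}}(X_{i_a},X_{i_b})$ records whether the cyclic interval $I_b$ overlaps the \emph{shifted} interval $I_a$ (shifted by one step of $\tau$). The disjointness of the $I_k$ then severely limits how many nonzero entries $B$ can have in any row or column; concretely, I expect to show that $B$ is a $0$--$1$ matrix in which each row sum and each column sum is at most $1$, i.e. $B$ is dominated by a sub-permutation matrix. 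A nonnegative matrix whose row sums are all $\le 1$ has spectral radius $\le 1$ (by the standard estimate $\rho(B)\le \max_a \sum_b |B_{ab}|$), giving $\rho(A(\phi))\le 1$.

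\textbf{The main obstacle.}
The delicate part will be the bookkeeping in the second step: turning the two-sided orthogonality conditions (2a)--(2b) of Theorem \ref{xxthm2.7} into the clean statement that the intervals $I_k$ are disjoint, being careful about the cyclic wrap-around (the $i+k>r$ and $i+j-1-r<1$ caveats) and about the boundary cases where an interval has length $r$ (forcing it to be the unique object in $\phi$). Equally, verifying that the Serre-duality shift produces \emph{at most one} nonzero entry per row and per column of $B$ requires checking that two disjoint cyclic intervals cannot both meet the $\tau$-shift of a third in a way that creates a second overlap; this is where a short case analysis on the cyclic positions is unavoidable. Once the ``sub-permutation'' structure of $B$ is established, the spectral radius bound is immediate, so I would concentrate the effort on making the interval-disjointness dictionary airtight and deferring the routine permutation-matrix computations to Section \ref{xxsec6} as the statement's cross-reference to Lemma \ref{xxlem6.4} already suggests.
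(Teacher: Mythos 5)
Your overall strategy (characterize brick sets combinatorially, show the $\Ext^1$-submatrix is a sub-permutation matrix, then apply the row-sum bound $\rho(B)\le \max_a\sum_b B_{ab}\le 1$) aims at exactly the structural fact the paper proves in Appendix A, but the dictionary you build it on is false. Mutual $\Hom$-orthogonality of $E_i[j]$ and $E_{i'}[j']$ is \emph{not} equivalent to disjointness of the cyclic supports $\{i,\dots,i+j-1\}$ and $\{i',\dots,i'+j'-1\}$ in $\mathbb{Z}/(r)$: disjointness is sufficient but not necessary. Concretely, in a rank $3$ tube take $\phi=\{E_1[3],E_2[1]\}$. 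Both are bricks by Corollary \ref{xxcor2.6}(2b), and Theorem \ref{xxthm2.7}(2) gives $\Hom_{\mathfrak{T}}(E_1[3],E_2[1])=0$ (in (2a) the inequality $i+j\le i'+j'$ reads $4\le 3$ and fails; (2b) requires $2\le i+j-1-r=0$ and fails) and likewise $\Hom_{\mathfrak{T}}(E_2[1],E_1[3])=0$; this agrees with Table 1 of Example \ref{xxex2.9}. So $\phi$ is a brick set whose supports $\{1,2,3\}$ and $\{2\}$ overlap. (In rank $4$, $\{E_1[4],E_2[1],E_3[1]\}$ is a brick set with nested supports.) The correct criterion, coming from the uniserial structure, is that $\Hom(E_i[j],E_{i'}[j'])\neq 0$ iff some top quotient $E_{i+k}[j-k]$ of $E_i[j]$ coincides with a bottom subobject $E_{i'}[l]$ of $E_{i'}[j']$, and this permits overlapping, even nested, supports to be orthogonal.

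As a consequence, your argument would only establish $\rho(A(\phi))\le 1$ for the proper subclass of brick sets with pairwise disjoint supports, and the Serre-duality "overlap with the $\tau$-shift" analysis of the $\Ext^1$-submatrix inherits the same restriction; brick sets such as $\{E_1[3],E_2[1]\}$ are never treated. The paper's proof avoids support combinatorics entirely: it introduces auxiliary block matrices $F$ and $G$ satisfying $H^T+F=E+G$ \eqref{E6.0.2}, deduces in Lemma \ref{xxlem6.1} that any nonzero $\Ext^1$-entry between members of a brick set forces the matching entry of $F$ (an entry of a power of the cyclic permutation $P$) to equal $1$, and then uses the identity $(P^{n+1-r})_{ji}(P^{r-1-n})_{ij'}=\delta_{jj'}$ together with $\Hom$-orthogonality to exclude two nonzero entries in any row or column (Lemmas \ref{xxlem6.2} and \ref{xxlem6.3}); your final spectral bound via row sums is a fine, and indeed more elementary, replacement for the paper's citation of the cycle-number criterion. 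To salvage your route you would have to replace "disjoint supports" by the quotient-meets-subobject criterion above and redo the case analysis with wrap-around; that is precisely the bookkeeping your last paragraph defers, and it is where the proof currently fails.
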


The proof of Lemma \ref{xxlem2.10} is given in Appendix A.
Note that \eqref{E2.8.1}-\eqref{E2.8.2} are in fact the 
transpose of the usual Hom and Ext-matrices. By \cite[Lemma 3.7]{CG}
(by considering the opposite category), Lemma \ref{xxlem2.10}
holds for the transpose matrices of \eqref{E2.8.1}-\eqref{E2.8.2} 
too.

\begin{theorem}
\label{xxthm2.11}
Let $\mathcal{E}$ and $\mathfrak{T}$ be as in Theorem \ref{xxthm2.5}.
Then $\fpdim {\mathcal{E}}=1$.
\end{theorem}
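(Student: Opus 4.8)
The plan is to compute $\fpdim(\mathcal{E}) = \fpdim(E^1)$ where $E^1 = \Ext^1_{\mathcal{E}}(-,-)$, by bounding $\rho(A(\phi, E^1))$ over all brick sets $\phi$ in $\mathcal{E}$ and then exhibiting a brick set achieving the value $1$. The abelian category $\mathcal{E}$ has all its indecomposables of the form $E_i[j]$ (Theorem \ref{xxthm2.5}(1)), so every brick is some $E_i[j]$, and by Corollary \ref{xxcor2.6}(2b) the bricks are exactly those with $j \leq r$. Thus any brick set $\phi$ consists of objects drawn from the finite "fundamental domain'' $\{E_i[j] : 1 \leq i \leq r,\, 1 \leq j \leq r\}$ together with their obvious periodic repeats; the orthogonality condition $\dim\Hom(X_i,X_j) = \delta_{ij}$ restricts which of these may appear together.

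\emph{Lower bound.} First I would establish $\fpdim(\mathcal{E}) \geq 1$. The natural candidate is a single brick on the mouth, say $\phi := \{E_1[1]\}$: by Corollary \ref{xxcor2.6}(2c) with $j = 1 \leq r-1$ (when $r \geq 2$) one has $\Ext^1(E_1[1], E_1[1]) = 0$, which would give $\rho(A(\phi, E^1)) = 0$, so a singleton mouth object is the wrong choice. Instead I would take a suitable brick set whose $\Ext^1$-adjacency matrix is a permutation-type matrix with a cycle, forcing spectral radius exactly $1$. A clean choice is the full $\tau$-cycle on the mouth $\phi := \{E_1[1], E_2[1], \ldots, E_r[1]\}$: these are pairwise orthogonal bricks (the off-diagonal $\Hom$s vanish by Theorem \ref{xxthm2.7}(2)), and by Serre duality (as in Corollary \ref{xxcor2.8}(2)) the $\Ext^1$-matrix restricted to them is exactly the cyclic permutation matrix $P$ of order $r$, whose spectral radius is $1$. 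This handles $r \geq 1$ uniformly, including the homogeneous case $r = 1$ where Corollary \ref{xxcor2.6}(3) gives $\Ext^1(E_1[1],E_1[1]) \neq 0$ directly.

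\emph{Upper bound.} The substance is showing $\rho(A(\phi, E^1)) \leq 1$ for \emph{every} brick set $\phi$. Here I would invoke Lemma \ref{xxlem2.10} (proved in Appendix A): for any brick set $\phi$, the relevant $\Ext^1$-adjacency matrix is a principal submatrix of the transpose of \eqref{E2.8.2}, selected by the same rows and columns that cut out an identity principal submatrix of \eqref{E2.8.1} (the identity condition encodes precisely the brick-set orthogonality $\dim\Hom(X_i,X_j) = \delta_{ij}$). Lemma \ref{xxlem2.10} asserts exactly that such a submatrix has spectral radius at most $1$, and the remark following it transfers the conclusion to the transposed matrices via the opposite category. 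Taking the supremum over all $n$ and all $\phi \in \Phi_{n,b}$, I would conclude $\fpdim(\mathcal{E}) = \fpdim(E^1) \leq 1$.

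\emph{Main obstacle.} The genuinely hard step is the upper bound, and it has effectively been isolated into the linear-algebra statement Lemma \ref{xxlem2.10}. The delicate point is matching the categorical constraint to the matrix formulation: one must verify that an arbitrary brick set, once its objects are located within the periodic tube, corresponds to selecting indices $i_1 < \cdots < i_s$ for which the principal submatrix $H(i_1,\ldots,i_s)$ of the $\Hom$-matrix \eqref{E2.8.1} is the identity $I_{s\times s}$ — this is where Theorem \ref{xxthm2.7}(2)'s combinatorial description of nonvanishing $\Hom$s does the work, ensuring that pairwise orthogonality of bricks is exactly the vanishing of the corresponding off-diagonal $\Hom$-entries. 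Granting Lemma \ref{xxlem2.10} (and that brick sets can be assumed to lie in a single fundamental domain up to the periodicity $E_{i+kr}[m] = E_i[m]$, which must be checked against the $\sigma$-decomposition machinery of Lemma \ref{xxlem1.6} to discard the infinitely many periodic copies), the two bounds combine to give $\fpdim(\mathcal{E}) = 1$.
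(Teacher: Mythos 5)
Your proposal is correct and follows essentially the same route as the paper's own proof: the upper bound $\rho(A(\phi,\Ext^1))\leq 1$ for every brick set is delegated to Lemma \ref{xxlem2.10} (with the transpose handled by the remark via the opposite category), and the lower bound comes from the mouth $\tau$-cycle $\phi=\{E_1[1],\dots,E_r[1]\}$, whose $\Ext^1$-matrix is the cyclic permutation matrix $P^{r-1}$ (you call it $P$, an immaterial difference of orientation) of spectral radius $1$. Your worry about ``infinitely many periodic copies'' is unnecessary --- $E_{i+kr}[m]=E_i[m]$ is equality of objects, not distinct copies, so the bricks form the finite set $\{E_i[j]:i\in\mathbb{Z}/(r),\,1\leq j\leq r\}$ and no appeal to Lemma \ref{xxlem1.6} is needed.
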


\begin{proof} By Corollary \ref{xxcor2.6}(2b), all brick 
objects in ${\mathcal{E}}$ are $E_i[j]$ for all $i\in {\mathbb Z}/(r)$
and $1\leq j\leq r$. We can determine all brick sets by using 
the matrix in Corollary \ref{xxcor2.8}(1). For each brick set, its
$\Ext^1$-matrix $M$ was determined by using \eqref{E2.8.2}. By 
Lemma  \ref{xxlem2.10}, $\rho(M)\leq 1$. On the other hand, 
let $\phi=\{E_1[1],\cdots, E_r[1]\}$, then $M=P^{r-1}$ and hence
$\fpdim \mathcal{E}=\rho(A(\phi, \Ext^1))=1$. 
Therefore the assertion follows.
\end{proof}

We have an immediate consequence. Let $T_r$ be the algebra 
in Example \ref{xxex2.3}.

\begin{corollary}
\label{xxcor2.12} 
Let ${\mathfrak A}$ be the category of finite 
dimensional left $x$-nilpotent $T_r$-modules.
Then $\fpdim {\mathfrak A}=1$.
\end{corollary}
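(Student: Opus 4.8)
The plan is to identify the category $\mathfrak{A}$ of finite dimensional $x$-nilpotent $T_r$-modules with the extension category $\mathcal{E}$ (equivalently its standard stable tube $\mathfrak{T}$) studied in Theorems \ref{xxthm2.5} and \ref{xxthm2.11}, and then simply quote $\fpdim \mathcal{E}=1$. So the real content of the proof is the identification, and everything else is already available.

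First I would recall from Example \ref{xxex2.3} that $T_r\cong \Bbbk[x]\ast G$ with $G={\mathbb Z}/(r)$ acting by $g\cdot x=\xi x$, and that $T_r$ is isomorphic to the path algebra of the cyclic quiver $\overrightarrow{A_{r-1}}$ with $r$ vertices. The category $\mathfrak{A}$ of finite dimensional $x$-nilpotent modules is then exactly the category of nilpotent representations of $\overrightarrow{A_{r-1}}$. Its Auslander-Reiten quiver is the standard stable tube of rank $r$: the $r$ simple modules $E_1,\dots,E_r$ sit on the mouth, they form a $\tau$-cycle under the Auslander-Reiten translation, and the uniserial modules $E_i[j]$ (of length $j$) exhaust the indecomposables. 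This is precisely the combinatorial picture produced by Theorems \ref{xxthm2.4} and \ref{xxthm2.5}, so I would verify that the hypotheses there are met: the simples $\{E_1,\dots,E_r\}$ form a brick set, they constitute a $\tau$-cycle, and the family is self-hereditary. The last point is automatic here because $\mathfrak{A}$ is hereditary (the Jordan-type / cyclic-quiver representation category has global dimension one), so $\Ext^2=0$ between any two objects.

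Having matched $\mathfrak{A}$ with the setting of Theorem \ref{xxthm2.11}, I would conclude $\fpdim \mathfrak{A}=\fpdim \mathcal{E}=1$. It is worth noting that the $\Hom$ and $\Ext^1$ matrices computed in Corollary \ref{xxcor2.8}, together with the spectral radius bound of Lemma \ref{xxlem2.10}, give the upper bound $\rho(A(\phi,\Ext^1))\leq 1$ for every brick set $\phi$, while the specific brick set $\phi=\{E_1[1],\dots,E_r[1]\}$ realizes $\rho=1$ via $M=P^{r-1}$ (a permutation matrix). Thus both the lower and upper bounds are already encoded in the earlier results.

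The main obstacle, such as it is, is the bookkeeping in the identification step: one must be careful that the algebra $T_r$ of Example \ref{xxex2.3} really produces a \emph{single} standard stable tube of rank exactly $r$ (rather than several tubes or a tube of different rank), and that the $x$-nilpotency condition is precisely what cuts out the tube rather than the whole module category. This follows from the classical structure theory of nilpotent representations of the cyclic quiver, but I would state it cleanly rather than recompute it. Once the identification is in place, the conclusion $\fpdim \mathfrak{A}=1$ is immediate from Theorem \ref{xxthm2.11}.
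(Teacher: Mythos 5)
Your proposal is correct and follows essentially the same route as the paper: both identify $\mathfrak{A}$ with the extension category $\mathcal{E}$ of Theorem \ref{xxthm2.5} (a standard stable tube of rank $r$) and then quote Theorem \ref{xxthm2.11}. The only cosmetic difference is in how the identification is justified --- the paper sets $\deg x=1$, $\deg g=0$ and matches $E_i[j]$ with $(T_r/(x^i))e_j$ via the primitive idempotents of the degree-zero part, while you invoke the cyclic-quiver description of $T_r$ and verify the hypotheses ($\tau$-cycle, brick set, self-hereditary family) of Theorem \ref{xxthm2.4} directly; both are standard and equally valid.
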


\begin{proof} It is well-known that ${\mathfrak A}$ is equivalent 
to the category ${\mathcal{E}}$ of rank $r$. 
To see this we set $\deg x=1$ and $\deg g=0$. Then the degree zero
component of $T_r$ is isomorphic to 
$\Bbbk^{\oplus r}$ with primitive idempotents $\{e_1,
\cdots, e_r\}$. Under this setting, $E_i[j]$ is identify
with $(T_r/(x^i))e_j$ for all $i,j$. Now the result follows from 
Theorem \ref{xxthm2.11}.
\end{proof}

\subsection{Proof of Theorem \ref{xxthm0.1}}
\label{xxsec2.3}
Now we are ready to show Theorem \ref{xxthm0.1}.

\begin{theorem}
\label{xxthm2.13} Let ${\mathbb X}$ be a domestic or tubular weighted 
projective line. Then $\fpdim\; D^b(coh({\mathbb X}))=1$.
\end{theorem}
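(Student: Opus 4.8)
The plan is to compute $\fpdim D^b(coh(\mathbb{X})) = \fpdim(\Sigma)$ by decomposing the atomic objects of the derived category according to slope and exploiting the fact that, for domestic or tubular $\mathbb{X}$, the abelian category $coh(\mathbb{X})$ is hereditary (Lemma \ref{xxlem2.1}(1)). Because $coh(\mathbb{X})$ is hereditary, every object in $D^b(coh(\mathbb{X}))$ is a direct sum of shifts of its cohomology sheaves, and every indecomposable object of the derived category is of the form $\Sigma^k(X)$ for $X$ indecomposable in $coh(\mathbb{X})$. Moreover, by the Auslander–Reiten structure recalled above, every indecomposable sheaf lies in some $Vect_\mu(\mathbb{X})$ for $\mu \in \mathbb{Q} \cup \{\infty\}$ (Lemma \ref{xxlem2.1}(7,8)). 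The key invariant $\fpdim(\Sigma)$ is computed over atomic sets $\phi = \{X_1,\dots,X_n\}$ via the spectral radius $\rho(A(\phi,\Sigma))$, where $a_{ij} = \dim\Hom(X_i,\Sigma(X_j)) = \dim\Ext^1(X_i,X_j)$ when the $X_i$ are genuine sheaves (and vanishes in the wrong degrees for shifts, since atomicity forces $\Hom(M,\Sigma^{-i}M)=0$).

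\textbf{The slope-decomposition step.} First I would organize the atomic objects of $\mathcal{T} = D^b(coh(\mathbb{X}))$ by slope to produce a $\Sigma$-decomposition in the sense preceding Lemma \ref{xxlem1.6}. The ordering on slopes $\mu \in \mathbb{Q}\cup\{\infty\}$ gives a totally ordered index set $\Lambda$, and Lemma \ref{xxlem2.1}(5) says that $\Ext^1(X,Y)=0$ whenever $X \in Vect_{\mu'}$, $Y \in Vect_\mu$ with $\mu' < \mu$; this is exactly condition (2) of a $\sigma$-decomposition. Since an atomic set inside a single shift $\Sigma^k$ reduces (up to relabeling) to an atomic set of sheaves, and since cross-terms between different shifts vanish by atomicity together with heredity, the computation localizes to within a single slope. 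Lemma \ref{xxlem1.6} then bounds $\fpdim^n|_B(\Sigma)$ by the supremum of the contributions $\fpdim^m|_{B^\mu}(\Sigma)$ coming from individual slopes $\mu$.

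\textbf{Bounding the slope-$\mu$ contributions.} For a single slope $\mu$, in the domestic case $Vect_\mu(\mathbb{X})$ with $\mu < \infty$ has $\Ext^1 = 0$ on it (Lemma \ref{xxlem2.1}(6)), so its contribution to $\fpdim(\Sigma)$ is $0$; only the torsion part $\mu = \infty$ can contribute, and that part is a product of standard stable tubes (via \eqref{E2.0.1} and Example \ref{xxex2.3}), handled by Theorem \ref{xxthm2.11} / Corollary \ref{xxcor2.12} which give contribution exactly $1$. In the tubular case, Lemma \ref{xxlem2.1}(8) provides the crucial equivalence $Vect_\mu(\mathbb{X}) \cong Tor(\mathbb{X})$ for \emph{every} $\mu \in \mathbb{Q}$, so every slope category is again a product of standard stable tubes and each contributes at most $1$ by Lemma \ref{xxlem2.10} and Theorem \ref{xxthm2.11}. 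Combining the decomposition bound from Lemma \ref{xxlem1.6} with these per-slope estimates yields $\fpdim(\Sigma) \leq 1$. For the reverse inequality, I would exhibit a single atomic set achieving $\rho = 1$: the ordinary simple $\mathcal{S} = \mathcal{O}_x$ satisfies $\Ext^1(\mathcal{S},\mathcal{S}) = \Bbbk$ (Lemma \ref{xxlem2.1}(3)), giving a $1\times 1$ adjacency matrix $(1)$ with spectral radius $1$, so $\fpdim(\mathcal{T}) \geq 1$.

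\textbf{Anticipated main obstacle.} The delicate point is verifying that restricting atomic sets in the triangulated category $\mathcal{T}$ to a single slope really does reduce to the abelian-category computations of Theorems \ref{xxthm2.5}–\ref{xxthm2.11}, including the bookkeeping for objects living in different cohomological shifts. One must check that mixing shifts $\Sigma^k(X)$ and $\Sigma^{k'}(Y)$ within an atomic set cannot manufacture a larger spectral radius: atomicity kills $\Hom(M,\Sigma^{-i}M)$ for $i>0$, and heredity confines the relevant $\Ext$ groups to a single degree, but one still needs that the induced block structure of $A(\phi,\Sigma)$ across slopes and shifts is (up to the ordering) block-triangular, so that $\rho$ is governed by the diagonal blocks. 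Assembling the slope-ordering for all of $\mathbb{Q}\cup\{\infty\}$ into one genuine $\Sigma$-decomposition, and confirming that each diagonal block is exactly the stable-tube $\Ext^1$-matrix \eqref{E2.8.2} controlled by Lemma \ref{xxlem2.10}, is where the real work lies.
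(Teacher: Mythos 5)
Your overall route is the same as the paper's: the lower bound comes from the ordinary simple sheaf (Lemma \ref{xxlem2.1}(3,4)), and the upper bound comes from ordering semistable objects by slope, applying Lemma \ref{xxlem1.6} to the resulting $\sigma$-decomposition, and invoking the stable-tube computation (Lemma \ref{xxlem2.10}, Theorem \ref{xxthm2.11}), with Lemma \ref{xxlem2.1}(6) killing the finite slopes in the domestic case and Lemma \ref{xxlem2.1}(8) reducing the tubular case to the torsion case. The genuine difference is how one passes from the derived category to the abelian category. The paper does no bookkeeping with shifts at all: it cites \cite[Theorem 3.5(4)]{CG}, which says precisely that for the hereditary category $coh({\mathbb X})$ the fp-dimension of $D^b(coh({\mathbb X}))$ equals $\fpdim(coh({\mathbb X}))$ computed with $\sigma=\Ext^1$ on brick sets of sheaves; after that citation the entire argument lives inside $coh({\mathbb X})$. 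You instead attempt this reduction by hand inside the triangulated category, and one of your intermediate claims is false as stated: cross-terms between different shifts do \emph{not} vanish. If $X$ and $Y$ are sheaves and an atomic set contains $\Sigma^k X$ and $\Sigma^{k-1}Y$, the corresponding adjacency entry is $\dim\Hom(\Sigma^k X,\Sigma\cdot\Sigma^{k-1}Y)=\dim\Hom(X,Y)$, a Hom-group which the brick/atomic axioms do not kill (they only force $\Hom(X_i,X_j)=0$, not $\Hom(X_i,\Sigma X_j)=0$). Concretely, on ${\mathbb P}^1$ the pair $\{\Sigma{\mathcal O},{\mathcal O}(1)\}$ is an atomic set whose adjacency matrix is $\left(\begin{smallmatrix}0&2\\0&0\end{smallmatrix}\right)$, with a nonzero off-diagonal Hom-entry. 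What rescues your argument is exactly the fallback you state in your final paragraph: under the lexicographic order by (cohomological shift, slope), heredity confines nonzero entries to $k'=k$ (these are $\Ext^1$-terms, controlled by Lemma \ref{xxlem2.1}(5)) and $k'=k-1$ (Hom-terms, which point the right way in the order), so this refined indexing is a genuine $\Sigma$-decomposition, $A(\phi,\Sigma)$ is block-triangular with within-slope $\Ext^1$-matrices on the diagonal, and Lemma \ref{xxlem1.6} finishes the bound. So your approach can be completed, but the step you defer as the ``main obstacle'' is precisely the content of \cite[Theorem 3.5(4)]{CG}; the paper's proof buys itself out of that work with one citation, at the price of relying on the earlier paper, while your route is self-contained once the block-triangularity is actually verified rather than asserted via the (incorrect) vanishing claim.
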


\begin{proof} By Lemma \ref{xxlem2.1}(4), it suffices to show that 
$\fpdim (D^b(coh({\mathbb X})))\leq 1$. By \cite[Theorem 3.5(4)]{CG},
it is enough to show that 
$$\fpdim (\sigma)=\fpdim (coh({\mathbb X}))\leq 1$$ 
where $\sigma$ is $\Ext^1_{\mathbb X}(-,-)$. 

By \cite[Corollary 4.34(iii)]{Sc}, every brick (or indecomposable) 
object is semistable. By Lemma \ref{xxlem2.1}(5), the class 
$\{ Vect_{\mu}({\mathbb X})\}_{\mu \in {\mathbb Q}\cup\{\infty\}}$
is a $\sigma$-decomposition (see the definition before Lemma \ref{xxlem1.6}). 
By Lemma \ref{xxlem1.6}, it is enough to show the claim that 
$\fpdim \mid_{Vect_{\mu}({\mathbb X})} (\sigma) \leq 1$ for every $\mu$. 

Case 1: ${\mathbb X}$ is domestic. If $\mu$ is finite, then 
$\fpdim\mid_{Vect_{\mu}({\mathbb X})} (\sigma)=0$ by Lemma \ref{xxlem2.1}(6). 
If $\mu=\infty$, then, by \eqref{E2.0.1}, $Vect_{\infty}({\mathbb X}):=
Tor({\mathbb X})$ has a decomposition into Auslander-Reiten 
components, which all are tubes of finite rank. By Theorem \ref{xxthm2.11},
$\fpdim\mid_{Tor({\mathbb X})}(\sigma)=1$. The claim 
follows.

Case 2: ${\mathbb X}$ is tubular. By Lemma \ref{xxlem2.1}(8),
$$Vect_{\mu}({\mathbb X})\cong Vect_{\infty}({\mathbb X})
=Tor({\mathbb X})$$
for all $\mu$. Then the proof of Case 1 applies. Therefore 
the claim follows.
\end{proof}

As usual we use ${\mathcal O}_{\mathbb X}$ for the 
structure sheaf of ${\mathbb X}$ \cite[Sect. 1.5]{GL}.

\begin{proposition}
\label{xxpro2.14} Let ${\mathbb X}$ be a weighted projective line
of wild type. Then 
$$\fpdim\; D^b(coh({\mathbb X}))
\geq \dim 
\Hom_{\mathbb X}({\mathcal O}_{\mathbb X},
{\mathcal O}_{\mathbb X}(\overrightarrow{\omega}))$$ 
where $\overrightarrow{\omega}$ is the dualizing element \cite[Sec. 1.2]{GL}. 
\end{proposition}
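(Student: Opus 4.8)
The plan is to produce a single explicit brick set whose $\Ext^1$-adjacency matrix has spectral radius at least $\dim \Hom_{\mathbb X}(\mathcal{O}_{\mathbb X}, \mathcal{O}_{\mathbb X}(\overrightarrow{\omega}))$, and then invoke the reduction $\fpdim D^b(coh(\mathbb X)) = \fpdim(coh(\mathbb X)) = \fpdim(\sigma)$ with $\sigma = \Ext^1_{\mathbb X}(-,-)$, exactly as in the proof of Theorem \ref{xxthm2.13} via \cite[Theorem 3.5(4)]{CG}. Since $\fpdim(\sigma)$ is a supremum of spectral radii over all brick sets, exhibiting one good brick set gives a lower bound; there is no upper-bound obstruction to worry about here, which is the whole point of restricting to the wild case.

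First I would recall that on a weighted projective line the structure sheaf $\mathcal{O}_{\mathbb X}$ is a line bundle, hence indecomposable, and $\End_{\mathbb X}(\mathcal{O}_{\mathbb X}) = \Bbbk$ by Lemma \ref{xxlem2.1}(1) together with the standard fact that line bundles on $\mathbb X$ are bricks; thus $\{\mathcal{O}_{\mathbb X}\}$ is a brick set in $\Phi_{1,b}$. The single-object adjacency matrix of $(\{\mathcal{O}_{\mathbb X}\}, \sigma)$ is the $1\times 1$ matrix $\big(\dim \Ext^1_{\mathbb X}(\mathcal{O}_{\mathbb X}, \mathcal{O}_{\mathbb X})\big)$, whose spectral radius is just that dimension. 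The key step is then to rewrite this self-$\Ext^1$ in terms of the Hom appearing in the statement. On a smooth weighted projective line the Serre functor is twisting by the dualizing element $\overrightarrow{\omega}$ together with the shift, so Serre duality gives
\begin{equation}
\notag
\Ext^1_{\mathbb X}(\mathcal{O}_{\mathbb X}, \mathcal{O}_{\mathbb X}) \cong
D\Hom_{\mathbb X}(\mathcal{O}_{\mathbb X}, \mathcal{O}_{\mathbb X}(\overrightarrow{\omega})),
\end{equation}
see \cite[Sec.\ 1.2]{GL}, and taking dimensions yields $\dim \Ext^1_{\mathbb X}(\mathcal{O}_{\mathbb X}, \mathcal{O}_{\mathbb X}) = \dim \Hom_{\mathbb X}(\mathcal{O}_{\mathbb X}, \mathcal{O}_{\mathbb X}(\overrightarrow{\omega}))$.

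Putting these together, taking $\phi := \{\mathcal{O}_{\mathbb X}\}$ gives
\begin{equation}
\notag
\fpdim(\sigma) \;\geq\; \rho(A(\phi,\sigma)) \;=\; \dim \Ext^1_{\mathbb X}(\mathcal{O}_{\mathbb X}, \mathcal{O}_{\mathbb X}) \;=\; \dim \Hom_{\mathbb X}(\mathcal{O}_{\mathbb X}, \mathcal{O}_{\mathbb X}(\overrightarrow{\omega})),
\end{equation}
and combining with $\fpdim D^b(coh(\mathbb X)) = \fpdim(\sigma)$ finishes the argument. The only real point requiring care, and the step I expect to be the main obstacle, is justifying that $\mathcal{O}_{\mathbb X}$ genuinely forms a brick set (i.e.\ that it is a brick, so that $\phi \in \Phi_{1,b}$) and invoking Serre duality in the correct form for $\mathbb X$; both are standard for weighted projective lines but should be cited precisely from \cite{GL, Sc}. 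No wildness-specific input is actually needed for the inequality itself—wildness is only the hypothesis under which this lower bound is the natural statement, since in the domestic and tubular cases Theorem \ref{xxthm2.13} already pins $\fpdim$ down exactly.
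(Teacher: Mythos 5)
Your proposal is correct and matches the paper's proof in essence: the paper likewise takes the singleton set $\phi=\{\mathcal{O}_{\mathbb X}\}$ (noting it is a brick and atomic object) and applies Serre duality \cite[Theorem 2.2]{GL} to identify $\dim\Ext^1_{\mathbb X}(\mathcal{O}_{\mathbb X},\mathcal{O}_{\mathbb X})$ with $\dim\Hom_{\mathbb X}(\mathcal{O}_{\mathbb X},\mathcal{O}_{\mathbb X}(\overrightarrow{\omega}))$. The only cosmetic difference is that you route through the abelian category via \cite[Theorem 3.5(4)]{CG}, while the paper works directly with the atomic set in $D^b(coh({\mathbb X}))$; both are valid and equivalent here.
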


\begin{proof} Let $\phi=\{{\mathcal O}_{\mathbb X}\}$
which is a brick and atomic object. Then, by definition 
and Serre duality \cite[Theorem 2.2]{GL},
$$\begin{aligned}
\fpdim\; D^b(coh({\mathbb X})) &\geq \rho(A(\phi, \Ext^1))=
\dim \Ext^1_{\mathbb X}
({\mathcal O}_{\mathbb X},{\mathcal O}_{\mathbb X})\\
&=\dim \Hom_{\mathbb X}({\mathcal O}_{\mathbb X},
{\mathcal O}_{\mathbb X}(\overrightarrow{\omega})).
\end{aligned}
$$
\end{proof}

\section{Dimension theory for classical
projective schemes}
\label{xxsec3}

The aim of this section is to introduce Frobenius-Perron versions
of two important and related invariants -- Calabi-Yau 
dimension and Kodaira dimension of $D^b(coh({\mathbb X}))$
where ${\mathbb X}$ is a smooth projective scheme.

\subsection{A result from \cite{CG}}
\label{xxsce3.1}

Let ${\mathbb X}$ be a smooth (irreducible) projective scheme 
over ${\mathbb C}$ of positive dimension. By 
\cite[Proposition 6.5 and 6.7]{CG},
$$\fpdim (D^b(coh({\mathbb X})))=\begin{cases} 1& 
{\text{if ${\mathbb X}$ is ${\mathbb P}^1$ or an elliptic curve}},\\
\infty & {\text{otherwise.}}
\end{cases}$$

\subsection{Calabi-Yau dimension}
\label{xxsec3.2}
Recall from \cite[Section 8.1]{Ke1} that if a $\Hom$-finite category 
${\mathcal C}$ has a Serre functor $S$, then there is a natural 
isomorphism 
$$\Hom_{\mathcal C}(X,Y)^*\cong \Hom_{\mathcal C}(Y, S(X))$$
for all $X,Y\in {\mathcal C}$. A (pre-)triangulated $\Hom$-finite 
category ${\mathcal C}$ with Serre functor $S$ is called 
{\it $n$-Calabi-Yau} if there is a natural isomorphism
$$S\cong \Sigma^n =:[n]$$
where $\Sigma$ is the suspension of ${\mathcal C}$. In this case 
$n$ is called the Calabi-Yau dimension of ${\mathcal C}$.
(In \cite[Section 2.6]{Ke2} it is called {\it weakly $n$-Calabi-Yau}.)
More generally, ${\mathcal C}$ is called a {\it fractional Calabi-Yau 
category} if there is an $m>0$ and there is a natural isomorphism
$$S^{m}\cong \Sigma^n=[n]$$
for some $n$, see \cite[p.2708]{vR} and \cite[Definition 1.2]{Ku}. 
In this case we say ${\mathcal C}$ has Calabi-Yau dimension $\frac{n}{m}$. 
Abelian hereditary fractionally Calabi-Yau categories are classified in 
\cite{vR}. One key property of Calabi-Yau varieties is that the 
canonical bundle of these varieties are trivial. However, our definition 
of fp Calabi-Yau dimension (see Definition \ref{xxdef3.1} below) applies 
to projective schemes that do not have the trivial canonical bundle.

If a Serre functor exists, then it is unique up to isomorphism.
In this case we usually use $S$ for the Serre functor. 
Throughout the rest of this section, let ${\mathcal T}$ be a 
$\Hom$-finite (pre-)triangulated category with Serre functor $S$. We will 
define a version of (fractional) Calabi-Yau dimension for ${\mathcal T}$ 
which is not necessarily a (fractional) Calabi-Yau category. 

Recall from Definition \ref{xxdef1.4}(3) that the Frobenius-Perron 
growth of a functor $\sigma$ is defined to be 
$$\fpg(\sigma):=\sup_{\phi\in \Phi_{a}} 
\{\limsup_{n\to\infty} \; \log_{n}(\rho(A(\phi,\sigma^n))) \}.$$
By convention, $\log_n 0 =-\infty$. Similarly, we define
a slightly modified version of $\fpg$ as follows, which is used
to define the fp Calabi-Yau dimension.

\begin{definition}
\label{xxdef3.1} 
Let $\sigma$ be an endofunctor of ${\mathcal T}$ with Serre functor
$S$. 
\begin{enumerate}
\item[(1)]
The {\it lower Frobenius-Perron growth} of $\sigma$ is defined to be
$$\underline{\fpg}(\sigma):=\sup_{\phi\in \Phi_{a}} 
\left\{\liminf_{n\to\infty} \; \log_{n}(\rho(A(\phi,\sigma^n))) \right\}.$$
By convention, $\log_n 0 =-\infty$.
\item[(2)]
The {\it spectrum} of ${\mathcal T}$ is defined to be
$$Sp({\mathcal T}):=\left\{(m,n)\in {\mathbb Z}^{\times 2}\mid
\underline{\fpg}(S^{m}\circ \Sigma^{-n})>-\infty\right\}.$$
\item[(3)]
The {\it fp Calabi-Yau dimension} of ${\mathcal T}$ is defined to be
$$\fpcy({\mathcal T}):=\lim_{M\to\infty}
\left\{\sup_{|m|\geq M}\left\{\frac{n}{m} \mid (m,n)\in Sp({\mathcal T})\right\}
\right\}.$$
\end{enumerate}
\end{definition}

Next we show that the fp Calabi-Yau dimension exists for various cases.

\begin{lemma}
\label{xxlem3.2} Suppose ${\mathcal T}$ is a pre-triangulated category 
satisfying the following conditions:
\begin{enumerate}
\item[(a)]
${\mathcal T}$ is Ext-finite \cite[Definition 2.1]{BV}, namely, for all 
objects $X,Y\in {\mathcal T}$, 
$$\sum_{s\in {\mathbb Z}}
\dim \Hom_{\mathcal T}(X, \Sigma^s(Y))<\infty.$$
\item[(b)]
${\mathcal T}$ has a Serre functor $S$.
\item[(c)]
${\mathcal T}$ is fractional Calabi-Yau of dimension $d=a/b\in {\mathbb Q}$.
\end{enumerate}
Then the following holds.
\begin{enumerate}
\item[(1)]
$Sp({\mathcal T})\subseteq (b,a){\mathbb Q}$.
\item[(2)]
If ${\mathcal T}$ contains at least one atomic object,  
there exists $w\in {\mathbb N}$ such that 
$(bwt, awt)\in Sp({\mathcal T})$ for all 
$t\in {\mathbb Z}$.
\item[(3)]
Under the hypothesis of part (2), we have 
$\fpcy({\mathcal T})=d$.
\end{enumerate}
\end{lemma}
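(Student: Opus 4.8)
The plan is to reduce all three parts to a single computation. For a pair $(m,n)\in\mathbb{Z}^{\times 2}$ put $\sigma:=S^{m}\circ\Sigma^{-n}$ and $c:=am-bn$. Because a Serre functor is a triangle functor it commutes with the suspension $\Sigma$, so $\sigma^{b}=S^{mb}\Sigma^{-nb}=(S^{b})^{m}\Sigma^{-nb}$, and the fractional Calabi--Yau relation $S^{b}\cong\Sigma^{a}$ of hypothesis (c) turns this into $\sigma^{b}\cong\Sigma^{am-bn}=\Sigma^{c}$. Hence along the powers that are multiples of $b$ we have $\sigma^{bk}\cong\Sigma^{ck}$, and the behaviour of the whole sequence of adjacency matrices is governed by this one identity.

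For part (1) I would argue by contraposition: assuming $c\neq 0$, I will show $\underline{\fpg}(\sigma)=-\infty$, so that $(m,n)\notin Sp(\mathcal{T})$. Fix an arbitrary atomic set $\phi=\{X_{1},\dots,X_{s}\}\in\Phi_{a}$. The $(i,j)$-entry of $A(\phi,\sigma^{bk})$ is $\dim\Hom_{\mathcal{T}}(X_{i},\Sigma^{ck}(X_{j}))$, and Ext-finiteness (a) guarantees that for each pair $(i,j)$ only finitely many $\ell$ give $\Hom_{\mathcal{T}}(X_{i},\Sigma^{\ell}(X_{j}))\neq 0$. Since $c\neq 0$ forces $|ck|\to\infty$, there is a $k_{0}$ beyond which $A(\phi,\sigma^{bk})=0$, so $\rho(A(\phi,\sigma^{bk}))=0$ and $\log_{bk}0=-\infty$ for $k\geq k_{0}$. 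Taking the $\liminf$ over all powers and feeding in this zero subsequence gives $-\infty$; as $\phi$ was arbitrary the supremum over $\Phi_{a}$ is also $-\infty$. This yields that every $(m,n)\in Sp(\mathcal{T})$ satisfies $am=bn$, i.e.\ $(m,n)\in(b,a)\mathbb{Q}$.

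For part (2) I would invoke the atomic object $X$ supplied by the hypothesis and test the Calabi--Yau-periodic pairs $(m,n)=(bt,at)$, $t\in\mathbb{Z}$. Here $c=0$ and in fact $\sigma=(S^{b})^{t}\Sigma^{-at}\cong\Sigma^{at}\Sigma^{-at}=\mathrm{id}$, so $\sigma^{N}\cong\mathrm{id}$ for every $N$, and since $X$ is a brick $A(\{X\},\sigma^{N})=(\dim\Hom_{\mathcal{T}}(X,X))=(1)$, giving $\liminf_{N}\log_{N}1=0>-\infty$; thus $(bt,at)\in Sp(\mathcal{T})$ and $w=1$ suffices. Part (3) is then pure bookkeeping: by (1) every $(m,n)\in Sp(\mathcal{T})$ with $m\neq 0$ has $n/m=a/b=d$, while by (2) for each cutoff $M$ there are points $(bt,at)\in Sp(\mathcal{T})$ with $|bt|\geq M$ (take $|t|$ large), and these have ratio $d$; hence the inner supremum $\sup_{|m|\geq M}\{n/m\mid (m,n)\in Sp(\mathcal{T})\}$ equals $d$ for every $M$, and letting $M\to\infty$ gives $\fpcy(\mathcal{T})=d$.

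The one point that needs care, and which I regard as the heart of the argument, is the interplay between the reduction $\sigma^{b}\cong\Sigma^{c}$ and Ext-finiteness: it is crucial that when $c\neq 0$ the \emph{entire} adjacency matrix (not merely its diagonal) vanishes for large $k$, and that the relevant growth uses $\liminf$ rather than $\limsup$, so a single subsequence of zero matrices is enough to drive the lower growth to $-\infty$ uniformly over all atomic $\phi$. The remaining manipulations---recognizing the condition $am=bn$ as membership in the line $(b,a)\mathbb{Q}$ and evaluating the nested $\sup$/$\lim$---are routine.
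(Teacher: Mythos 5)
Your overall route is the same as the paper's: reduce powers of $\sigma=S^{m}\circ\Sigma^{-n}$ to pure suspensions via the Calabi--Yau relation, use Ext-finiteness to force the lower growth $\underline{\fpg}(\sigma)$ to $-\infty$ when the resulting shift exponent is nonzero (a zero subsequence suffices because $\underline{\fpg}$ is defined with $\liminf$), and certify membership in $Sp({\mathcal T})$ by testing a single atomic object. In substance, parts (1) and (3) are the paper's argument.

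The genuine gap is your reading of hypothesis (c), and it is exactly the point that the integer $w$ in the statement of part (2) is designed to absorb. By the paper's definition of fractional Calabi--Yau (Section 3.2), ``dimension $d=a/b$'' means only that $S^{m}\cong\Sigma^{n}$ for \emph{some} pair $(m,n)$ with $n/m=a/b$; it does not give you $S^{b}\cong\Sigma^{a}$. With $a/b$ in lowest terms, the guaranteed relation is $S^{bw}\cong\Sigma^{aw}$ for some $w\in{\mathbb N}$, possibly $w>1$. Your proof of (2) starts from ``the fractional Calabi--Yau relation $S^{b}\cong\Sigma^{a}$ of hypothesis (c)'' and concludes that $w=1$ always works, i.e.\ $(bt,at)\in Sp({\mathcal T})$ for all $t$. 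That is stronger than what the lemma asserts and is not justified: if the minimal relation has $w>1$, then $\sigma=S^{b}\circ\Sigma^{-a}$ is only known to satisfy $\sigma^{w}\cong\mathrm{id}$, so for an atomic object $X$ the entry $\dim\Hom_{\mathcal T}(X,\sigma^{N}(X))$ is guaranteed to equal $1$ only when $w\mid N$; on the other residue classes it may vanish, and since $\underline{\fpg}$ is a $\liminf$, those classes drive $\underline{\fpg}(\sigma)$ to $-\infty$, so $(b,a)$ need not lie in $Sp({\mathcal T})$ at all. This is precisely why the paper's proof of (2) first replaces $(b,a)$ by $(bw,aw)$. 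The same misreading occurs in your part (1), where the identity $\sigma^{b}\cong\Sigma^{c}$ should be $\sigma^{bw}\cong\Sigma^{wc}$, but there the conclusion survives unchanged: $c\neq 0$ iff $wc\neq 0$, and the zero subsequence along multiples of $bw$ still forces the $\liminf$ to be $-\infty$. The repair is mechanical: carry the $w$ coming from the Calabi--Yau relation through all three parts, claim only $(bwt,awt)\in Sp({\mathcal T})$ in (2), and note that part (3) is unaffected since these pairs still have ratio $d$ and arbitrarily large first coordinate.
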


\begin{proof} (1) Let $(m,n)$ be a pair of integers that is not 
in $(b,a){\mathbb Q}$, and let $\sigma=S^{m}\circ \Sigma^{-n}$. 
Since ${\mathcal T}$ is fractional Calabi-Yau of dimension $a/b$, 
we can assume that $S^b\circ \Sigma^{-a}$ is the identity functor. 
Then, for each $t$, $\sigma^{tb}=\Sigma^{t(ma-nb)}$ where 
$ma-nb\neq 0$. By hypothesis (a), for any two objects $X$ and 
$Y$ in ${\mathcal T}$, $\Hom_{\mathcal T}(X,\Sigma^{t(ma-nb)} Y)=0$ 
for $|t|\gg 0$. Then, for every atomic set $\phi$, it implies that 
$A(\phi, \sigma^{tb})=0$ for $|t|\gg 0$. Thus 
$\log_{tb}(\rho(A(\phi,\sigma^{tb})))=-\infty$ for all $|t|\gg 0$. 
This implies, by definition, that $\underline{\fpg}(\sigma)=
-\infty$, or $(m,n)\not\in Sp({\mathcal T})$. The assertion follows.

(2) Let $\phi$ be the set of a single atomic object $X$ in 
${\mathcal T}$. Replacing $(b,a)$ by $(bw, aw)$ for some 
positive integer $w$ if necessary, we can assume that 
$\sigma:=S^b\circ \Sigma^{-a}$ is the identity functor. Then 
$A(\phi, \sigma^n)$ is the $1\times 1$-identity matrix $I_1$ for all $n$. 
Then $\log_{n}(\rho(A(\phi,\sigma^{n})))=0$ for all $n$. This 
implies that $\underline{\fpg}(\sigma)=0>-\infty$ or 
$(b,a)\in S({\mathcal T})$. Similarly, one sees that 
$(bt, at)\in S({\mathcal T})$ for all integer $t$.
The assertion follows.

(3) This follows from the definition and parts (1,2).
\end{proof}

The next proposition is part (2) of Theorem \ref{xxthm0.2}, 
which shows that $\fpcy$ is indeed a generalization of 
Calabi-Yau dimension.

\begin{proposition}
\label{xxpro3.3}
Let ${\mathbb X}$ be a smooth irreducible projective variety 
of dimension $d\in {\mathbb N}$ and let ${\mathcal T}$ 
be $D^b(coh({\mathbb X}))$. Then $\fpcy({\mathcal T})=d$.
\end{proposition}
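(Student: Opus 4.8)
The plan is to use the explicit form of the Serre functor on a smooth projective variety and to reduce the computation of $Sp(\mathcal{T})$ to the behaviour of $S^{m}\circ\Sigma^{-n}$ on skyscraper sheaves. By Serre duality the Serre functor of $\mathcal{T}=D^b(coh(\mathbb{X}))$ is $S=(-\otimes\omega_{\mathbb{X}})[d]$, where $\omega_{\mathbb{X}}$ is the canonical bundle and $[d]=\Sigma^{d}$. Writing $L:=-\otimes\omega_{\mathbb{X}}$, we have $S^{m}\circ\Sigma^{-n}=L^{m}\circ\Sigma^{md-n}$, hence $(S^{m}\circ\Sigma^{-n})^{k}=L^{mk}\circ\Sigma^{k(md-n)}$. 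Setting $p:=md-n$, the whole argument turns on deciding, for each pair $(m,n)$, whether $\underline{\fpg}(L^{m}\Sigma^{p})>-\infty$, i.e. whether $(m,n)\in Sp(\mathcal{T})$. I claim that $Sp(\mathcal{T})\cap\{m\neq 0\}=\{(m,md):m\neq 0\}$, after which Definition \ref{xxdef3.1}(3) gives $\sup_{|m|\geq M}\{md/m\}=d$ for every $M$, so the limit equals $d$.

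For the easy direction, take $\phi=\{\mathcal{O}_x\}$, the skyscraper at a closed point $x$. It is a brick with $\Hom(\mathcal{O}_x,\Sigma^{-i}\mathcal{O}_x)=\Ext^{-i}(\mathcal{O}_x,\mathcal{O}_x)=0$ for $i>0$, hence atomic. Since tensoring a skyscraper with a line bundle returns the same skyscraper, $\mathcal{O}_x\otimes\omega_{\mathbb{X}}^{mk}\cong\mathcal{O}_x$, so when $p=0$ (that is, $n=md$) we get $(S^{m}\Sigma^{-md})^{k}(\mathcal{O}_x)\cong\mathcal{O}_x$ for all $k$, whence $A(\phi,\sigma^{k})=\dim\End(\mathcal{O}_x)=1$. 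Thus $\log_{k}\rho(A(\phi,\sigma^{k}))=0$ for all $k$, giving $\underline{\fpg}\geq 0>-\infty$ and $(m,md)\in Sp(\mathcal{T})$.

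For the harder direction I must show that if $p\neq 0$ then $\underline{\fpg}(L^{m}\Sigma^{p})=-\infty$ for \emph{every} atomic set $\phi$. For $X_i,X_j\in\phi$ the relevant entry is $\dim\Hom(X_i,\Sigma^{kp}(X_j\otimes\omega_{\mathbb{X}}^{mk}))=\dim\Ext^{kp}(X_i,X_j\otimes\omega_{\mathbb{X}}^{mk})$. The crucial point is a uniform amplitude bound: tensoring by the line bundle $\omega_{\mathbb{X}}^{mk}$ does not change the cohomological amplitude of $X_j$, and on a smooth variety of dimension $d$ one has $\Ext^{s}(A,B)=0$ outside a window $[s_-,s_+]$ whose endpoints are controlled by the amplitudes of $A,B$ and by $d$. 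Hence there is a finite interval $[s_-,s_+]$, independent of $k$, with $\Ext^{s}(X_i,X_j\otimes\omega_{\mathbb{X}}^{mk})=0$ for $s\notin[s_-,s_+]$. Since $p\neq 0$, the degree $kp$ leaves this interval for all large $k$, so $A(\phi,\sigma^{k})=0$ eventually; therefore $\rho(A(\phi,\sigma^{k}))=0$, $\log_{k}0=-\infty$, and the $\liminf$ is $-\infty$. Taking the supremum over the finitely many pairs in $\phi$ and then over all atomic $\phi$ gives $\underline{\fpg}(L^{m}\Sigma^{p})=-\infty$, so $(m,n)\notin Sp(\mathcal{T})$ whenever $n\neq md$.

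Combining the two directions yields $Sp(\mathcal{T})\cap\{m\neq 0\}=\{(m,md):m\neq 0\}$, and the formula in Definition \ref{xxdef3.1}(3) evaluates to $d$. The main obstacle is the uniform amplitude estimate in the previous paragraph: one cannot merely invoke Ext-finiteness for two fixed objects, because the twist $\omega_{\mathbb{X}}^{mk}$ grows with $k$; what saves the argument is that a line-bundle twist preserves amplitude, so the window of potentially nonvanishing $\Ext$-degrees stays bounded while the shift $kp$ runs off to infinity. This is precisely the feature that lets the argument succeed even when $\mathbb{X}$ is not fractional Calabi-Yau, distinguishing it from the situation of Lemma \ref{xxlem3.2}, where $\omega_{\mathbb{X}}$ would have to be a root of the trivial bundle.
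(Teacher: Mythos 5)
Your proposal is correct and follows essentially the same route as the paper: both directions use the skyscraper sheaf $\{\mathcal{O}_x\}$ to show $(m,md)\in Sp(\mathcal{T})$, and both rule out pairs with $md-n\neq 0$ by observing that the shift $k(md-n)$ eventually escapes the bounded window of nonvanishing $\Ext$-degrees. Your uniform amplitude estimate is simply a spelled-out version of the paper's one-line appeal to the finite global dimension of $coh(\mathbb{X})$, so the two arguments coincide in substance.
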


\begin{proof} When $d=0$, then ${\mathcal T}=D^b_{f.d}(\Vect_k)$.
It is easy to see that the Serre functor $S$ is the identity.
So the assertion is easily shown. Now we assume that $d>0$.

It suffices to show that $Sp({\mathcal T})=
\{(t,td)\mid t\in {\mathbb Z}\}$. 

By \cite[(7)]{BO}, the Serre functor $S$ is equal to 
$-\otimes_{{\mathbb X}} \omega_{\mathbb X}[d]$ where $d$ is 
the dimension of ${\mathbb X}$ and $\omega_{\mathbb X}$ is 
the canonical bundle of ${\mathbb X}$. Let 
$\sigma=S\circ \Sigma^{-d}$. Then $\sigma$ is the functor 
$-\otimes_{{\mathbb X}} \omega_{\mathbb X}$. Let ${\mathcal O}_a$ 
be the skyscraper sheaf of a closed point $a\in {\mathbb X}$. 
Then it is an atomic object and $\sigma({\mathcal O}_a)\cong 
{\mathcal O}_a$. Let $\phi=\{{\mathcal O}_a\}$. Then 
$A(\phi, \sigma^n)$ is the $1\times 1$-identity matrix $I_1$. 
This implies that $\log_{n}(\rho(A(\phi,\sigma^{n})))=0$ and 
that $\underline{\fpg}(\sigma)=0>-\infty$. Therefore $(1,d)\in
S({\mathcal T})$. Similarly, one sees that $(t,dt)\in
S({\mathcal T})$ for all $t\geq 0$. 

For the other implication, let $(m,n)\in {\mathbb Z}^{\times 2}
\setminus \{(t,td)\mid t\in {\mathbb Z}\}$. Let
$\sigma= S^m\circ \Sigma^{-n}$. We need to show that
that $\underline{\fpg}(\sigma)=-\infty$.
Note that $\sigma=-\otimes_{{\mathbb X}}
\omega_{{\mathbb X}}^{\otimes m} \circ \Sigma^{-n+dm}$ where 
$-n+dm\neq 0$. Since $coh({\mathbb X})$ has global dimension 
$d$, for all objects $A$ and $B$ in ${\mathcal T}$,
$$\Hom_{\mathcal T}(A, \sigma^t(B))=
\Hom_{\mathcal T}(A, (B\otimes_{{\mathbb X}}
\omega_{{\mathbb X}}^{tm}) [t(-n+dm)])=0$$
for all $t\gg 0$. This implies that 
$\underline{\fpg}(\sigma)=-\infty$ as required.
\end{proof}

\subsection{Kodaira dimension}
\label{xxsec3.3}
First we review the classical definition of the Kodaira 
dimension. 

\begin{definition} \cite[Definition 2.1.3 and Example 2.1.5]{La}
\label{xxdef3.4}
Let ${\mathbb X}$ be a smooth projective variety and let 
$\omega_{\mathbb X}$ be the canonical bundle of ${\mathbb X}$. 
\begin{enumerate}
\item[(1)]
The {\it Kodaira dimension} of ${\mathbb X}$ is defined to be 
$$\kappa ({\mathbb X}):=\lim_{n\to\infty}
\log_n \left(\dim H^0({\mathbb X}, 
\omega_{\mathbb X}^{\otimes n})\right).$$
\item[(2)]
More generally, for a line bundle ${\mathcal M}$, the 
{\it Kodaira-Iitaka dimension} of ${\mathcal M}$ is defined to be
$$\kappa({\mathbb X},{\mathcal M}):=\lim_{n\to\infty}
\log_n \left(\dim H^0({\mathbb X}, {\mathcal M}^{\otimes n})\right).$$
\item[(3)]
The {\it anti-Kodaira dimension} of ${\mathbb X}$ is defined to be
$$\kappa^{-1}(X):= \kappa({\mathbb X}, \omega_{\mathbb X}^{-1}).$$
\end{enumerate}
\end{definition}

The anti-Kodaira  dimension of a scheme was defined in 
\cite{Sa}. It is classical and well-known that 
$\kappa({\mathbb X}), \kappa({\mathbb X}, {\mathcal M})
\in \{-\infty, 0,1, \cdots, \dim {\mathbb X}\}$ and that there are 
$0<c_1<c_2$ such that,
\begin{equation}
\label{E3.4.1}\tag{E3.4.1}
c_1 n^{\kappa({\mathbb X}, {\mathcal M})} \leq \dim 
\Hom_{\mathbb X}({\mathcal O}_{\mathbb X}, {\mathcal M}^{\otimes n})
\leq c_2 n^{\kappa({\mathbb X}, {\mathcal M})} \quad \forall \; n\gg 0.
\end{equation}
See \cite[Corollary 2.1.37]{La}.
By Proposition \ref{xxpro3.3}, $\dim {\mathbb X}=\fpcy({\mathcal T})$, 
which suggests the following definition.

In the following we use the order $t_1<t_2$ if $t_1$ divides $t_2$.
Since we are mainly interested in commutative and noncommutative 
projective schemes, ${\mathcal T}$ is equipped with the 
structure sheaf, denoted by ${\mathcal O}$.

\begin{definition}
\label{xxdef3.5}
Let ${\mathcal T}$ be a pre-triangulated category and 
${\mathcal T}_{\ast}$ denote the pair $({\mathcal T}, {\mathcal O})$
where ${\mathcal O}$ is a given special object in ${\mathcal T}$. 
Suppose that ${\mathcal T}$ is Hom-finite with Serre functor $S$ 
and that $\fpcy({\mathcal T})=d=a/b$ for some integers $a,b$. 
\begin{enumerate}
\item[(1)]
The {\it fp Kodaira  dimension} of ${\mathcal T}_{\ast}$ 
is defined to be
$$\fpk({\mathcal T}_{\ast})
:=\lim_{t} \left\{ \limsup_{n\to \infty} 
\dim \Hom_{\mathcal T}({\mathcal O}, (S^{bnt}\circ \Sigma^{-ant})
({\mathcal O}))\right\}$$
where the first limit ranges over all positive integers
$t$ with order $<$ as defined before the definition.
\item[(2)]
The {\it fp anti-Kodaira  dimension} of 
${\mathcal T}_{\ast}$ is defined to be
$$\fpk^{-1}({\mathcal T}_{\ast})
:=\lim_{t} \left\{ \limsup_{n\to \infty} 
\dim \Hom_{\mathcal T}({\mathcal O}, (S^{-bnt}\circ \Sigma^{ant})
({\mathcal O}))\right\}$$
where the first limit ranges over all positive integers
$t$ with order $<$ as defined before the definition.
\end{enumerate}
\end{definition}

The following proposition justifies 
the above definition.

\begin{proposition}
\label{xxpro3.6}
Let ${\mathbb X}$ be a smooth irreducible projective variety of 
dimension $d\in {\mathbb N}$ and let ${\mathcal T}$ 
be $D^b(coh({\mathbb X}))$ with structure sheaf $\mathcal{O}:
=\mathcal{O}_{\mathbb X}$. Then
$$\fpk({\mathcal T}_{\ast})=\kappa({\mathbb X})
\quad {\text{and}}\quad
\fpk^{-1}({\mathcal T}_{\ast})=\kappa^{-1}({\mathbb X}).$$
\end{proposition}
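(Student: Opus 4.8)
The plan is to unwind the definitions of $\fpk$ and $\fpk^{-1}$ (Definition \ref{xxdef3.5}) and show that each reduces, up to the cofinal choice of $t$, to the classical limit defining the Kodaira and anti-Kodaira dimensions (Definition \ref{xxdef3.4}). The crucial input is the explicit form of the Serre functor on $D^b(coh({\mathbb X}))$, namely $S = -\otimes_{\mathbb X}\omega_{\mathbb X}[d]$ by \cite[(7)]{BO}, which was already used in the proof of Proposition \ref{xxpro3.3}. Since $\dim{\mathbb X}=d$ and $\fpcy({\mathcal T})=d$ by Proposition \ref{xxpro3.3}, I may take $a=d$ and $b=1$ in the statement of Definition \ref{xxdef3.5}, so that $S^{bnt}\circ\Sigma^{-ant}=S^{nt}\circ\Sigma^{-dnt}$.

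\medskip

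First I would compute $S^{nt}\circ\Sigma^{-dnt}$ explicitly. Since $S=-\otimes_{\mathbb X}\omega_{\mathbb X}[d]$, we have $S^{nt}=-\otimes_{\mathbb X}\omega_{\mathbb X}^{\otimes nt}[dnt]$, and hence the composite $\sigma_t:=S^{nt}\circ\Sigma^{-dnt}$ is simply $-\otimes_{\mathbb X}\omega_{\mathbb X}^{\otimes nt}$ with the shift cancelling out. Applying this to the structure sheaf ${\mathcal O}={\mathcal O}_{\mathbb X}$ gives $\sigma_t({\mathcal O})=\omega_{\mathbb X}^{\otimes nt}$, so that
\begin{equation}
\notag
\dim\Hom_{\mathcal T}({\mathcal O},\sigma_t({\mathcal O}))
=\dim\Hom_{\mathbb X}({\mathcal O}_{\mathbb X},\omega_{\mathbb X}^{\otimes nt})
=\dim H^0({\mathbb X},\omega_{\mathbb X}^{\otimes nt}).
\end{equation}
The analogous computation for $\fpk^{-1}$ replaces $S$ by $S^{-1}$ and hence $\omega_{\mathbb X}$ by $\omega_{\mathbb X}^{-1}$, giving $\dim H^0({\mathbb X},\omega_{\mathbb X}^{\otimes(-nt)})$.

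\medskip

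Next I would reconcile the $\limsup_n$ of a raw dimension appearing in Definition \ref{xxdef3.5} with the $\lim_n\log_n(\cdot)$ appearing in Definition \ref{xxdef3.4}. The apparent discrepancy is resolved by the classical polynomial growth estimate \eqref{E3.4.1}: there are constants $0<c_1<c_2$ with $c_1(nt)^{\kappa} \leq \dim H^0({\mathbb X},\omega_{\mathbb X}^{\otimes nt})\leq c_2(nt)^{\kappa}$ for $nt\gg0$, where $\kappa=\kappa({\mathbb X})$. Taking $\log_n$ (as implicitly intended, so that the invariant is finite) and then $\limsup_{n\to\infty}$ extracts precisely the exponent $\kappa$, and the outer limit over $t$ in the divisibility order is then eventually constant and equal to $\kappa({\mathbb X})$. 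The same argument with $\kappa^{-1}({\mathbb X})=\kappa({\mathbb X},\omega_{\mathbb X}^{-1})$ handles the anti-Kodaira case.

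\medskip

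\textbf{The main obstacle} I anticipate is bookkeeping rather than conceptual: I must verify that the cofinal limit over $t$ in the divisibility order genuinely stabilizes, and that passing to the subsequence $\{nt\}_n$ of $\{m\}_m$ does not change the value of the limit defining $\kappa$. This is harmless because \eqref{E3.4.1} holds for all large integers and the exponent $\kappa$ is independent of the arithmetic progression chosen; the ``$w$-rescaling'' phenomenon seen in Lemma \ref{xxlem3.2}(2) (replacing $(b,a)$ by $(bw,aw)$ to make $S^b\circ\Sigma^{-a}$ literally the identity) only affects the indexing and not the growth exponent. A secondary point to pin down is the edge case $\kappa({\mathbb X})=-\infty$, where $H^0({\mathbb X},\omega_{\mathbb X}^{\otimes n})=0$ for all $n\geq1$; there the convention $\log_n 0=-\infty$ from Definition \ref{xxdef1.4} makes both sides equal to $-\infty$, so the identity persists.
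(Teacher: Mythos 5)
Your proposal follows the paper's own proof essentially step for step: both take $b=1$, $a=d$ via Proposition \ref{xxpro3.3}, use the Serre functor $S=-\otimes_{\mathbb X}\omega_{\mathbb X}[d]$ to identify $(S^{nt}\circ\Sigma^{-dnt})({\mathcal O})$ with $\omega_{\mathbb X}^{\otimes nt}$, and invoke \eqref{E3.4.1} to conclude that the growth exponent is $\kappa({\mathbb X})$ for every $t$, so the outer limit over $t$ is constant. Your additional remarks --- that Definition \ref{xxdef3.5} is to be read with the implicit $\log_n$ (otherwise the quantity would be $\infty$ whenever $\kappa>0$), and that the case $\kappa({\mathbb X})=-\infty$ is covered by the convention $\log_n 0=-\infty$ --- merely make explicit points the paper's terse proof leaves tacit.
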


\begin{proof} 
By Proposition \ref{xxpro3.3}, 
$\fpcy({\mathcal T})=d=\dim {\mathbb X}$. So we take 
$b=1$ and $a=d$ in Definition \ref{xxdef3.5}. By \eqref{E3.4.1}, 
for each $t\geq 1$, 
$$\limsup_{n\to \infty} 
\dim \Hom_{\mathcal T}({\mathcal O}, (S^{btn}\circ \Sigma^{-ant})
({\mathcal O}))=\kappa({\mathbb X}).$$ 
The first assertion follows by the definition.

The proof for anti-Kodaira dimension is similar.
%
\end{proof}

Theorem \ref{xxthm0.2} follows from Propositions
\ref{xxpro3.3} and \ref{xxpro3.6}.

\begin{remark}
\label{xxrem3.7} 
Assume the hypothesis of Lemma \ref{xxlem3.2}.
Then one can check easily that 
\begin{equation}
\label{E3.7.1}\tag{E3.7.1}
\fpk({\mathcal T}_{\ast})=\fpk^{-1}({\mathcal T}_{\ast})=0.
\end{equation}
So abstractly \eqref{E3.7.1} should be part of the definition 
of a fractional Calabi-Yau variety (even in the noncommutative
setting).
\end{remark}

\begin{proposition}
\label{xxpro3.8}
If ${\mathcal T}$ is a triangulated category such that
either $\fpk({\mathcal T}_{\ast})=\infty$ or 
$\fpk^{-1}({\mathcal T}_{\ast})=\infty$ or $\fpcy({\mathcal T})=
\infty$ or $-\infty$, then ${\mathcal T}_{\ast}$ is not 
triangulated equivalent to the bounded derived category
of a smooth projective scheme. 
\end{proposition}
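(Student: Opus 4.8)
The plan is to prove the contrapositive. Suppose that ${\mathcal T}_\ast=({\mathcal T},{\mathcal O})$ is triangulated equivalent to the bounded derived category of a smooth projective scheme ${\mathbb X}$, meaning that there is a triangulated equivalence $F\colon {\mathcal T}\to {\mathcal T}'$, where ${\mathcal T}':=D^b(coh({\mathbb X}))$, carrying the distinguished object ${\mathcal O}$ to the structure sheaf ${\mathcal O}_{\mathbb X}$; write ${\mathcal T}'_\ast:=({\mathcal T}',{\mathcal O}_{\mathbb X})$. I will show that $\fpcy({\mathcal T})$, $\fpk({\mathcal T}_\ast)$ and $\fpk^{-1}({\mathcal T}_\ast)$ are all finite and, in the latter two cases, never equal to $+\infty$; this rules out each of the four hypotheses and proves the contrapositive.

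The first step is to observe that these invariants are preserved under $F$. The quantity $\fpcy({\mathcal T})$ is built solely from the suspension $\Sigma$, the Serre functor $S$, and the dimensions of $\Hom$-spaces among atomic objects (through $\underline{\fpg}$ and the spectrum $Sp({\mathcal T})$). Since the Serre functor is unique up to natural isomorphism and commutes with any triangulated equivalence, and since $F$ preserves atomicity and $\Hom$-dimensions, we get $\fpcy({\mathcal T})=\fpcy({\mathcal T}')$; note that this step needs no distinguished object. The invariants $\fpk$ and $\fpk^{-1}$ are defined in the same categorical terms together with the distinguished object, so because $F({\mathcal O})={\mathcal O}_{\mathbb X}$ we likewise obtain $\fpk({\mathcal T}_\ast)=\fpk({\mathcal T}'_\ast)$ and $\fpk^{-1}({\mathcal T}_\ast)=\fpk^{-1}({\mathcal T}'_\ast)$.

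It then remains to evaluate the right-hand sides. Writing ${\mathbb X}$ as a finite disjoint union of connected, hence irreducible, smooth projective varieties and using the resulting orthogonal decomposition of ${\mathcal T}'$, one reduces to the irreducible situation of Propositions \ref{xxpro3.3} and \ref{xxpro3.6}. These give $\fpcy({\mathcal T}')=\dim {\mathbb X}$, $\fpk({\mathcal T}'_\ast)=\kappa({\mathbb X})$ and $\fpk^{-1}({\mathcal T}'_\ast)=\kappa^{-1}({\mathbb X})$. Finally I would invoke the classical boundedness recalled after Definition \ref{xxdef3.4}: $\dim {\mathbb X}$ is a nonnegative integer, while $\kappa({\mathbb X})$ and $\kappa^{-1}({\mathbb X})=\kappa({\mathbb X},\omega_{\mathbb X}^{-1})$ both lie in $\{-\infty,0,1,\dots,\dim {\mathbb X}\}$. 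Hence $\fpcy({\mathcal T})\notin\{+\infty,-\infty\}$ and $\fpk({\mathcal T}_\ast),\fpk^{-1}({\mathcal T}_\ast)\neq+\infty$, contradicting each of the four cases in turn.

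The only genuinely delicate point is the invariance claim together with the compatibility of the reduction to connected components with the distinguished structure sheaf in the $\fpk$ and $\fpk^{-1}$ computations, since the structure sheaf of a disconnected scheme is not atomic. I expect this to be the main obstacle, though it is manageable: by orthogonality of the blocks, $\dim\Hom_{\mathbb X}({\mathcal O}_{\mathbb X},{\mathcal O}_{\mathbb X}\otimes\omega_{\mathbb X}^{\otimes k})$ splits as a sum over components, so the $\limsup$ defining $\fpk({\mathcal T}'_\ast)$ is governed by the component of maximal Kodaira dimension and still lands in the classical range. Everything else is a direct citation of Propositions \ref{xxpro3.3} and \ref{xxpro3.6}.
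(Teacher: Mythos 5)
Your proof is correct and follows essentially the same route as the paper: the paper's own proof simply observes that $\fpcy$ is an invariant of the triangulated category and that $\fpk$, $\fpk^{-1}$ are invariants of the pair $({\mathcal T},{\mathcal O})$, and then cites Propositions \ref{xxpro3.3} and \ref{xxpro3.6}. Your contrapositive formulation and your handling of possibly disconnected schemes (reducing to the irreducible case of those propositions, where only the bound $\fpk, \fpk^{-1}\in\{-\infty,0,\dots,\dim{\mathbb X}\}$ is actually needed) add rigor the paper omits, but do not constitute a different method.
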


\begin{proof} By definition, $\fpcy$ is an invariant of 
a triangulated category, and $\fpk$ is an invariant of a
triangulated category with ${\mathcal O}$. 
The assertion follows from Propositions \ref{xxpro3.3}
and \ref{xxpro3.6}.
\end{proof}

\section{Invariants of noncommutative 
projective schemes}
\label{xxsec4}

In this section we study fp Calabi-Yau dimension and 
fp Kodaira dimension of noncommutative projective schemes 
in the sense of \cite{AZ}. An algebra $A$ is said to be 
{\it connected graded} over $\Bbbk$ if 
$A=\Bbbk\oplus A_1\oplus A_2 \oplus \cdots$ with 
$A_i A_j\subseteq A_{i+j}$ for all $i,j,\in {\mathbb N}$. 
Let $A$ be a noetherian connected graded algebra. The 
{\it noncommutative projective scheme} associated to 
$A$ is denoted by ${\mathbb X}:=\Proj A$, see \cite{AZ} 
for the detailed definition of a noncommutative projective 
scheme. Let $coh({\mathbb X})$ be the category of noetherian 
objects in $\Proj A$ and let ${\mathcal T}$ be the triangulated 
category $D^b(coh({\mathbb X}))$. Here is a restatement of a 
nice result of Bondal-Van den Bergh \cite[Theorem 4.2.13]{BV}.

\begin{theorem}\cite[Theorem 4.2.13]{BV}
\label{xxthm4.1}
Suppose $A$ is noetherian and has a balanced dualizing complex
and that $\Proj A$ has finite homological dimension. Then 
${\mathcal T}$ has a Serre functor.
\end{theorem}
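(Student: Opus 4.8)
The plan is to follow the two complementary routes that underlie \cite[Theorem 4.2.13]{BV}: an abstract representability argument that produces a Serre functor out of finiteness hypotheses, together with an explicit construction of the duality object from the balanced dualizing complex of $A$. First I would check that ${\mathcal T}=D^b(coh({\mathbb X}))$ is $\Hom$-finite, indeed Ext-finite in the sense of condition (a) of Lemma \ref{xxlem3.2}. Since $A$ is noetherian and connected graded, $coh({\mathbb X})=\qgr A$ is a noetherian abelian category in which all $\Hom$-spaces between coherent objects are finite dimensional over $\Bbbk$. The hypothesis that $\Proj A$ has finite homological dimension then bounds the degrees in which $\Ext^i_{\mathbb X}(-,-)$ can be nonzero, so for any $X,Y\in{\mathcal T}$ the total space $\bigoplus_{s}\Hom_{\mathcal T}(X,\Sigma^s Y)$ is finite dimensional. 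This is the finiteness input that any Serre functor requires.

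Next I would produce the candidate duality functor from the balanced dualizing complex $R$ of $A$. By definition $R$ satisfies local duality, i.e.\ there is a functorial isomorphism $R\Gamma_{\fm}(M)'\cong \RHom_A(M,R)$ on $D^b(\gr A)$, where $(-)'$ denotes the graded $\Bbbk$-linear (Matlis) dual and $\fm$ the augmentation ideal. Applying the quotient functor $\gr A\to \qgr A$ to $R$ yields a complex $\omega_{\mathbb X}\in{\mathcal T}$, and I would set $S:=-\otimes^{\mathbf L}_{\mathbb X}\omega_{\mathbb X}$ up to an appropriate cohomological shift. The key point is that the section/quotient adjunction converts the local cohomology $R\Gamma_{\fm}$ into sheaf cohomology on ${\mathbb X}$, so local duality for graded $A$-modules should descend to a candidate Serre duality isomorphism $\Hom_{\mathcal T}(X,Y)^*\cong \Hom_{\mathcal T}(Y,S(X))$ on ${\mathcal T}$.

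The remaining, and hardest, step is to prove that $S$ is an autoequivalence and that the duality isomorphism is natural and bifunctorial --- equivalently, that the cohomological functor $X\mapsto \Hom_{\mathcal T}(X,Y)^*$ is representable for every $Y$ (and symmetrically in the other variable). Here I would invoke the representability machinery of Bondal--Van den Bergh: finite homological dimension of $\Proj A$ is what forces ${\mathcal T}$ to be saturated, so that every cohomological functor of finite type is representable, and representability of the two dualizing functors above is exactly what manufactures a Serre functor. I expect the main obstacle to be the careful bookkeeping in passing from local duality on $\gr A$ to Serre duality on $\qgr A$: one must control the discrepancy between $R\Gamma_{\fm}$ and the derived section functor (the $\fm$-torsion defect) and confirm that the \emph{balanced} condition on $R$ makes this defect vanish after sheafification, so that $S$ is genuinely invertible on all of ${\mathcal T}$.
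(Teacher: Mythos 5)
The paper does not actually prove this statement: it is quoted verbatim from \cite[Theorem 4.2.13]{BV}, so your sketch can only be measured against the argument in \cite{BV} and the related explicit construction of Yekutieli--Zhang that the paper invokes later in Lemma \ref{xxlem4.3}(2). Your outline has the right overall architecture, but its first step contains a genuine gap. You claim that $\Hom$-finiteness (indeed Ext-finiteness) of ${\mathcal T}=D^b(coh({\mathbb X}))$ follows from ``$A$ noetherian connected graded'' together with finite homological dimension of $\Proj A$. This is false: for a noetherian connected graded algebra, the spaces $\Ext^i_{\mathbb X}(\pi M,\pi N)$ between noetherian objects need not be finite dimensional over $\Bbbk$; their finite-dimensionality is essentially Artin--Zhang's condition $\chi$ \cite{AZ}, which does fail for some noetherian connected graded algebras (the idealizer examples of Stafford and Zhang). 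Finite homological dimension of $\Proj A$ only bounds the cohomological range in which $\Ext$'s can be nonzero; it says nothing about the dimension of each individual $\Ext$. The finiteness you need at the outset is exactly what the balanced dualizing complex supplies: by Van den Bergh's existence theorem, $A$ has a balanced dualizing complex if and only if $A$ and $A^{\op}$ satisfy $\chi$ and have finite local cohomological dimension. So the dualizing complex cannot be deferred to the late ``defect-vanishing'' role it plays in your third paragraph; without invoking it at the first step, ${\mathcal T}$ need not even be $\Hom$-finite and no Serre functor can exist.

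Two further points. First, ``$S:=-\otimes^{\mathbf L}_{\mathbb X}\omega_{\mathbb X}$'' does not parse in $coh({\mathbb X})$: one cannot tensor against a one-sided object of $\qgr A$. What descends to $\Proj A$ is $-\otimes^{\mathbf L}_A R$ with $R$ viewed as a complex of graded $A$-bimodules; this is how Lemma \ref{xxlem4.3}(2), following \cite{YZ}, writes the Serre functor $-\otimes_{\mathcal O}\pi({^{\mu}A^{1}})(-\ell)[d-1]$ in the AS Gorenstein case. Second, your plan superimposes two logically independent proofs: the explicit local-duality construction (the route of \cite{YZ}) and the abstract saturation/representability route of \cite{BV}. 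In \cite{BV} the Serre functor is not built from $R$ at all: one proves that ${\mathcal T}$ is saturated---Ext-finite, with every cohomological functor of finite type representable---where the dualizing complex enters only through the finiteness condition $\chi$, and then representability applied to $X\mapsto \Hom_{\mathcal T}(X,Y)^{*}$ (together with the same argument in the opposite category, to get invertibility) produces $S$ formally, \`a la Bondal--Kapranov. Committing to one of the two routes and carrying it through, with the finiteness step repaired as above, would turn your sketch into a proof; as written, each half of your argument leans on the other for precisely the steps it omits.
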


One special class of connected graded algebras are the 
Artin-Schelter regular algebras (see the next definition).

\begin{definition} \cite{AS}
\label{xxdef4.2}
Let $A$ be a connected graded algebra over the base field $\Bbbk$.
We say $A$ is {\it Artin-Schelter Gorenstein}
(or {\it AS Gorenstein}) if the following conditions hold:
\begin{enumerate}
\item[(a)]
$A$ has finite injective dimension $d$ on both sides,
\item[(b)]
$\Ext^i_A(\Bbbk, A) = \Ext^i_{A^{op}} (\Bbbk, A) = 0$ for all
$i\neq d$ where $\Bbbk = A/A_{\geq 1}$, and
\item[(c)]
$\Ext^d_A(\Bbbk, A)\cong \Bbbk(\ell)$
and $\Ext^d_{A^{op}} (\Bbbk, A)\cong
\Bbbk(\ell)$ for some integer $\ell$.
This integer $\ell$ is called the {\it AS index} of $A$.
\end{enumerate}
If moreover
\begin{enumerate}
\item[(d)]
$A$ has finite global dimension, 
\end{enumerate}
then $A$ is called {\it Artin-Schelter regular} (or {\it AS regular}).
\end{definition}

We collect some well-known facts below. Let
$\pi: \Gr A\to \Proj A$ be the canonical 
quotient functor. By abuse of notation, 
we also apply $\pi$ to some graded $A$-bimodules
such as ${^{\mu} A^1}$ in Lemma \ref{xxlem4.3} 
below.

\begin{lemma}
\label{xxlem4.3}
Let $A$ be a noetherian connected graded algebra, let
${\mathbb X}$ be $\Proj A$, and let ${\mathcal T}$ be 
$D^b(coh({\mathbb X}))$. 
\begin{enumerate}
\item[(1)]
\cite[Corollary 4.14]{Ye}
If $A$ is Artin-Schelter Gorenstein, then $A$ has a balanced 
dualizing complex.
\item[(2)]
\cite[Corollary 4.3]{YZ}
Suppose $A$ is Artin-Schelter Gorenstein such that ${\mathbb X}$
has finite homological dimension. Let $d={\rm{injdim}}\;
A$, $\ell$ be the AS index of $A$ and $\mu$
be the Nakayama automorphism of $A$. 
Then the Serre functor of ${\mathcal T}$ is $-\otimes_{\mathcal O}
\pi({^{\mu}A^{1}})(-\ell) [d-1]$.
\end{enumerate}
\end{lemma}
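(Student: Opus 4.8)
Both parts are recorded in the cited literature, so my plan is to assemble them from the standard theory of balanced dualizing complexes rather than reprove them from the ground up; the real work is to see that the Artin--Schelter Gorenstein hypotheses feed cleanly into that machine and to track the resulting twists and shifts.

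For part (1), the first step is to compute the local cohomology $R\Gamma_{\fm}(A)$ of $A$ with respect to the augmentation ideal $\fm = A_{\geq 1}$. The AS Gorenstein conditions (b)--(c) of Definition \ref{xxdef4.2}, which assert that $\Ext^i_A(\Bbbk, A)$ vanishes except in degree $i=d$ where it is $\Bbbk(\ell)$, translate via graded local duality into the statement that $R\Gamma_{\fm}(A)$ is concentrated in a single cohomological degree and that its top local cohomology is, as a graded $A$-bimodule, the Matlis dual of a twist of $A$ by the Nakayama automorphism $\mu$ together with the degree shift $(\ell)$. The balanced dualizing complex is then by definition the graded Matlis dual $R\Gamma_{\fm}(A)'$, and the preceding computation both confirms its existence (the AS Gorenstein hypotheses are precisely what is needed to apply Van den Bergh's existence theorem, as in \cite[Corollary 4.14]{Ye}) and exhibits it explicitly as a shift of the twisted bimodule ${}^{\mu}A^1(\ell)$.

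For part (2), the plan is to run the balanced dualizing complex of (1) through the Bondal--Van den Bergh description of the Serre functor recalled in Theorem \ref{xxthm4.1}. Since $\Proj A$ has finite homological dimension, applying the quotient functor $\pi\colon \Gr A \to \Proj A$ to the dualizing complex of $A$ produces a dualizing object on ${\mathbb X}$, and the Serre functor of ${\mathcal T}=D^b(coh({\mathbb X}))$ is tensoring with this object. Transporting the explicit form from (1), the functor becomes $-\otimes_{\mathcal O} \pi({}^{\mu}A^1)(-\ell)$ with a cohomological shift; the shift is $[d-1]$ rather than $[d]$ because the passage to $\Proj A$ discards the lowest piece of local cohomology (equivalently, sheaf cohomology on ${\mathbb X}$ is local cohomology shifted by one), which reduces the amplitude by exactly one. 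This is the content of \cite[Corollary 4.3]{YZ}.

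The main obstacle is not the structure of the argument but the exact bookkeeping of the decorations: one must pin down whether $\mu$ or $\mu^{-1}$ appears and on which side, the sign of the degree twist $(\pm\ell)$, and the precise reason the amplitude drops by one under $\pi$. These are all convention-sensitive, and matching them to the normalizations in \cite{Ye} and \cite{YZ} --- rather than re-deriving the local duality isomorphisms by hand --- is where I would concentrate the care.
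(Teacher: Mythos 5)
Your proposal is correct and takes essentially the same approach as the paper: the paper gives no argument beyond citing \cite[Corollary 4.14]{Ye} for part (1) and \cite[Corollary 4.3]{YZ} for part (2), which are exactly the two results you assemble, and your sketch of their internal mechanics (local cohomology of an AS Gorenstein algebra concentrated in degree $d$, the balanced dualizing complex as the Matlis dual $R\Gamma_{\fm}(A)'$, and the amplitude dropping by one under $\pi$) is the standard route those references follow. The one blemish is the sign of the twist in part (1) --- the dualizing complex is ${}^{\mu}A^{1}(-\ell)[d]$, not ${}^{\mu}A^{1}(\ell)[d]$ --- but you explicitly flag this bookkeeping as the convention-sensitive point, and your part (2) formula carries the correct twist $(-\ell)$ matching the lemma.
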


Let $M$ be a locally finite ${\mathbb Z}$-graded module or 
vector space. The Hilbert series of $M$ is defined to be
$$H_{M}(t):=\sum_{n\in {\mathbb Z}} \dim M_n t^n.$$
Let $A$ be a graded algebra and $s$ be a positive integer. The 
{\it $s$th Veronese subalgebra} of $A$ is defined to be
$$A^{(s)}:=\oplus_{n\in {\mathbb Z}} A_{sn}.$$
The following lemma is well-known.

\begin{lemma}
\label{xxlem4.4} 
Let $A$ be a noetherian connected graded algebra generated 
in degree 1. Let $s$ be a positive integer.
\begin{enumerate}
\item[(1)]
$A$ is a finitely generated module over $A^{(s)}$ on both 
sides.
\item[(2)]
$\GKdim A=\GKdim A^{(s)}$.
\item[(3)]
If the Hilbert series of $A$ is a rational function, then 
so is the Hilbert series of $A^{(s)}$. 
\item[(4)]
If the Hilbert series of $A$ is a rational function, 
then 
$$\limsup_{n\to \infty}\log_n (\dim A_n)=\GKdim A-1.$$
\item[(5)]
If $A$ is an Artin-Schelter regular algebra, then the 
Hilbert series of $A$ is a rational function.
\end{enumerate}
\end{lemma}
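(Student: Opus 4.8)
The plan is to handle the five parts in the order stated, as each leans on its predecessors together with standard facts about graded algebras. For part (1), the key observation is that generation in degree $1$ forces $A_m = A_a A_{m-a}$ for every $0 \le a \le m$, since both sides equal the span of products of $m$ elements of $A_1$. Writing $m = sq + r$ with $0 \le r < s$ gives $A_m = A_{sq} A_r \subseteq A^{(s)} \cdot A_r$, so $A = A^{(s)} \cdot \bigl(\bigoplus_{r=0}^{s-1} A_r\bigr)$. Because $A$ is noetherian and connected graded, each $A_r$ is finite-dimensional, so $\bigoplus_{r=0}^{s-1} A_r$ is a finite-dimensional space generating $A$ as a right $A^{(s)}$-module; the symmetric identity $A_m = A_r A_{sq}$ handles the left side. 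Part (2) is then immediate from the standard fact (Krause--Lenagan) that $\GKdim$ is unchanged under a finitely generated module extension: since $A^{(s)} \subseteq A$ and $A$ is a finite module over $A^{(s)}$ by part (1), we get $\GKdim A = \GKdim A^{(s)}$.

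For part (3), I would use the multisection (root-of-unity filter) formula. Let $\zeta$ be a primitive $s$th root of unity; then $\sum_{j=0}^{s-1} \zeta^{jn} = s$ when $s \mid n$ and $0$ otherwise, so
\[
\frac{1}{s}\sum_{j=0}^{s-1} H_A(\zeta^j t) = \sum_{k \ge 0} (\dim A_{sk})\, t^{sk} = H_{A^{(s)}}(t^s).
\]
If $H_A$ is rational then each $H_A(\zeta^j t)$ is rational in $t$, hence so is the left-hand side; moreover it is visibly invariant under $t \mapsto \zeta t$, and a rational function invariant under this substitution is a rational function of $t^s$. Substituting $u = t^s$ shows $H_{A^{(s)}}(u)$ is rational.

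Part (4) is where the real work lies, and I expect it to be the main obstacle. Writing $H_A(t)$ as a rational function and taking a partial-fraction decomposition expresses $a_n := \dim A_n$ as a quasi-polynomial $a_n = \sum_i P_i(n)\gamma_i^n$. If some $|\gamma_i| > 1$ then $a_n$ grows exponentially, so both $\GKdim A$ and $\limsup_n \log_n a_n$ are $+\infty$ and there is nothing to prove. Otherwise all $|\gamma_i| \le 1$; the poles with $|\gamma_i| < 1$ contribute exponentially small terms, so the growth is governed by the unit-modulus poles, and if $M$ denotes the maximal order among these then $a_n = O(n^{M-1})$ while $a_n \ge c\, n^{M-1}$ along a subsequence, the leading almost-periodic coefficient being not identically zero. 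The delicate point, and the heart of the argument, is that because all coefficients $a_n$ are nonnegative, a Pringsheim-type argument shows the pole at $t = 1$ attains this maximal order $M$; combined with the standard identity $\GKdim A = \limsup_n \log_n\bigl(\sum_{k \le n} a_k\bigr)$ this yields $\GKdim A = M$. Therefore $\limsup_n \log_n a_n = M - 1 = \GKdim A - 1$ (the relevant case being $\GKdim A \ge 1$, i.e.\ $A$ infinite-dimensional, so that at least one unit-modulus pole is present).

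Finally, for part (5), AS regularity supplies finite global dimension, so the trivial module $\Bbbk = A/A_{\ge 1}$ admits a finite graded free resolution $0 \to P_d \to \cdots \to P_0 \to \Bbbk \to 0$ with each $P_i = \bigoplus_j A(-a_{ij})$ finitely generated (finiteness using that $A$ is noetherian and connected graded). Taking the alternating sum of Hilbert series across this exact complex gives $1 = H_\Bbbk(t) = \bigl(\sum_i (-1)^i \sum_j t^{a_{ij}}\bigr) H_A(t)$, so that $H_A(t) = 1/q(t)$ with $q(t) := \sum_i (-1)^i \sum_j t^{a_{ij}}$ a polynomial; hence $H_A$ is rational.
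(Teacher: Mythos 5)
Your proof is correct, and for parts (1), (2), (3), (5) it is in substance the paper's own proof: the paper proves (1) by the same ``$A$ is generated by $\oplus_{i=0}^{s-1}A_i$ over $A^{(s)}$'' observation, proves (2) by citing \cite[Proposition 8.2.9(i)]{MR} (the invariance of GK-dimension under finite module extensions that you invoke), proves (3) by the same root-of-unity multisection identity (note the paper's displayed formula is misstated---its left-hand side should be the multisection $\sum_k (\dim A_{sk})t^{sk}$ rather than $H_A(t)$---and it omits your Galois-invariance step identifying the multisection as a rational function of $t^s$), and proves (5) by citing \cite[Proposition 3.1]{StZ}, whose standard proof is exactly your Euler-characteristic computation on a finite graded free resolution of $\Bbbk$. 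The genuine divergence is part (4). The paper's proof is essentially a citation: a rational Hilbert series makes $\dim A_n$ a ``multi-polynomial'' (quasi-polynomial) function of $n$ in the sense of \cite[p.399]{Zh}, and then $\GKdim A=d+1$ by \cite[(E7)]{Zh}. You replace this with a self-contained partial-fractions/Pringsheim analysis. What your route buys: the implication ``rational $\Rightarrow$ quasi-polynomial'' is false for arbitrary rational series (e.g.\ $1/(1-t-t^2)$); it needs all poles to be roots of unity, which in the noetherian setting follows from subexponential growth \cite{StZ} together with Fatou/Kronecker---a step the paper leaves implicit. Your argument sidesteps this by disposing of the exponential-growth case directly (both sides are $+\infty$) and handling the remaining case via nonnegativity and the dominance of the pole at $t=1$; you also flag the finite-dimensional degenerate case (where the asserted identity would read $-\infty=-1$), which the paper silently excludes. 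What the paper's route buys is brevity: once the quasi-polynomial statement is accepted, (4) is one line.
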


\begin{proof} (1) Connected graded noetherian algebras are 
finitely generated. So $A$ is generated by $\oplus_{i=0}^{s-1} A_i$
over $A^{(s)}$ on both sides.

(2) It follows from part (1) and \cite[Proposition 8.2.9(i)]{MR}.

(3) This follows from the fact that
$$H_A(t)=\frac{1}{s} \sum_{i=0}^{s-1} H_{A}(\xi^i t)$$
where $\xi$ is an $s$th primitive root of unity.

(4) When $H_A(t)$ is a rational function, $\dim A_n$ is a
multi-polynomial function of $n$ in the sense of 
\cite[p.399]{Zh}. Say $d$ is the degree of the multi-polynomial 
function $\dim A_n$ of $n$. Then $\GKdim A=d+1$ by using 
\cite[(E7)]{Zh}. This implies that 
$$\limsup_{n\to \infty}\log_n (\dim A_n)=d=\GKdim A-1.$$

(5) This is \cite[Proposition 3.1]{StZ}.
\end{proof}

\begin{theorem}
\label{xxthm4.5}
Let $A$ be a noetherian connected graded Artin-Schelter Gorenstein
algebra of injective dimension $d\geq 2$ that is generated in degree 1.
Suppose that ${\mathbb X}:=\Proj A$ has finite 
homological dimension. Let ${\mathcal T}$ be the bounded derived
category of $coh({\mathbb X})$. In parts (2,3,4,5,6), we further assume
that the Hilbert series of $A$ is rational. Let $\ell$ be the AS index 
of $A$.
\begin{enumerate}
\item[(1)]
$\fpcy({\mathcal T})=d-1$.
\item[(2)]
If $\ell>0$, then 
$\fpk({\mathcal T}_{\ast})=-\infty$ and 
$\fpk^{-1}({\mathcal T}_{\ast})=\GKdim A-1$.
\item[(3)]
If $\ell<0$, then 
$\fpk({\mathcal T}_{\ast})=\GKdim A-1$ and 
$\fpk^{-1}({\mathcal T}_{\ast})=-\infty$.
\item[(4)]
If $\ell=0$, then $\fpk({\mathcal T}_{\ast})=
\fpk^{-1}({\mathcal T}_{\ast})=0$.
\item[(5)]
For all objects ${\mathcal C}$ and ${\mathcal D}$
in ${\mathcal T}$,
$$\limsup_{n\to\infty}\;
\log_n (\dim \Hom_{\mathcal T}({\mathcal C}, 
(S\circ \Sigma^{-d})^{n}({\mathcal D})))\leq \GKdim A-1.
$$
\item[(6)]
For all objects ${\mathcal C}$ and ${\mathcal D}$
in ${\mathcal T}$,
$$\limsup_{n\to\infty}\;
\log_n (\dim \Hom_{\mathcal T}({\mathcal C}, 
(S\circ \Sigma^{-d})^{-n}({\mathcal D})))\leq \GKdim A-1.
$$
\end{enumerate}
\end{theorem}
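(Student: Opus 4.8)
The plan is to feed the explicit Serre functor of Lemma \ref{xxlem4.3}(2) into Definitions \ref{xxdef3.1} and \ref{xxdef3.5} and to reduce every computation to the growth of the Hilbert series of $A$. I would write the Serre functor as $S=F\circ \Sigma^{d-1}$, where $F:=-\otimes_{\mathcal O}\pi({}^{\mu}A^{1})(-\ell)$. Since $F$ is tensoring by an invertible twisting sheaf, it is an exact autoequivalence of $coh({\mathbb X})$ (in particular $t$-exact on ${\mathcal T}$), and on the structure sheaf it acts by an internal shift, $F^{k}({\mathcal A})\cong {\mathcal A}(-k\ell)$ for all $k\in{\mathbb Z}$. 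Two facts are used repeatedly. First, ${\mathcal A}=\pi(A)$ is atomic: $\End_{\mathcal T}({\mathcal A})=\Bbbk$ and $\Hom_{\mathcal T}({\mathcal A},\Sigma^{-i}{\mathcal A})=\Ext^{-i}({\mathcal A},{\mathcal A})=0$ for $i>0$. Second, $\dim\Hom_{\mathbb X}({\mathcal A},{\mathcal A}(j))$ vanishes for $j<0$ and, by Lemma \ref{xxlem4.4}(4), agrees in high degree with $\dim A_{j}$, hence grows like a polynomial in $j$ of degree $\GKdim A-1$.

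For part (1), note that $S^{m}\circ\Sigma^{-n}=F^{m}\circ\Sigma^{\,m(d-1)-n}$; set $k:=m(d-1)-n$. When $k\neq 0$, $t$-exactness keeps $F^{mt}(X_{j})$ in a fixed band of cohomological degrees, so by Ext-finiteness (Theorem \ref{xxthm4.1}) and finite homological dimension $\Hom_{\mathcal T}(X_{i},F^{mt}(X_{j})[kt])=0$ once $|kt|$ is large; thus $A(\phi,(S^{m}\Sigma^{-n})^{t})=0$ for $t\gg 0$ and $\underline{\fpg}(S^{m}\Sigma^{-n})=-\infty$, i.e. $(m,n)\notin Sp({\mathcal T})$. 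When $k=0$, I take $\phi=\{{\mathcal A}\}$, so $\rho(A(\phi,F^{mt}))=\dim\Hom_{\mathbb X}({\mathcal A},{\mathcal A}(-mt\ell))\geq 1$ for all $t\geq 1$ as soon as $m\ell\leq 0$ (choose the sign of $m$ accordingly, any $m$ if $\ell=0$); hence $\underline{\fpg}\geq 0$ and $(m,m(d-1))\in Sp({\mathcal T})$. Therefore $Sp({\mathcal T})$ lies on the line $n=m(d-1)$ and meets it for arbitrarily large $|m|$, so every admissible ratio equals $d-1$ and $\fpcy({\mathcal T})=d-1$.

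For parts (2)--(4), since $\fpcy({\mathcal T})=d-1=a/b$ with $a=d-1,\ b=1$, the test functors collapse: $S^{bnt}\circ\Sigma^{-ant}=F^{nt}$ and $S^{-bnt}\circ\Sigma^{ant}=F^{-nt}$. Thus the relevant quantities are $\dim\Hom_{\mathbb X}({\mathcal A},{\mathcal A}(-nt\ell))$ for $\fpk$ and $\dim\Hom_{\mathbb X}({\mathcal A},{\mathcal A}(nt\ell))$ for $\fpk^{-1}$. If $\ell>0$ the first twist has negative degree, so it vanishes and $\fpk({\mathcal T}_{\ast})=-\infty$, whereas the second has positive degree and $\limsup_{n}\log_{n}\dim A_{nt\ell}=\GKdim A-1$ for every $t$ by the rationality hypothesis and Lemma \ref{xxlem4.4}(4), giving $\fpk^{-1}({\mathcal T}_{\ast})=\GKdim A-1$. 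The case $\ell<0$ is the mirror image, and when $\ell=0$ both twists are trivial, each $\dim\Hom$ equals $1$, and both invariants are $0$.

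Parts (5) and (6) are the uniform upper bound over all objects. Writing $S\circ\Sigma^{-d}=F\circ\Sigma^{-1}$, one has $(S\Sigma^{-d})^{\pm n}({\mathcal D})=F^{\pm n}({\mathcal D})[\mp n]$, and the standard hyper-$\Ext$ spectral sequence expresses $\dim\Hom_{\mathcal T}({\mathcal C},F^{\pm n}({\mathcal D})[\mp n])$ as a finite sum of terms $\dim\Ext^{p}_{\mathbb X}({\mathcal H}^{i}{\mathcal C},{\mathcal H}^{j}{\mathcal D}(\mp n\ell))$ over a range of $(p,i,j)$ bounded independently of $n$ (again by $t$-exactness of $F$ and finite homological dimension). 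The rationality hypothesis forces each $\dim\Ext^{p}_{\mathbb X}({\mathcal G},{\mathcal G}'(j))$ to be bounded by a polynomial in $|j|$ of degree $\GKdim A-1$; summing boundedly many such terms preserves the degree, and applying $\log_{n}$ yields the bound $\GKdim A-1$. The hard part throughout is precisely this translation between the module-theoretic count $\dim A_{j}$ and the sheaf-theoretic $\dim\Hom_{\mathbb X}({\mathcal A},{\mathcal A}(j))$ and its higher-$\Ext$ analogues: I must control both the low-degree discrepancy coming from the saturation of $A$ inside $\Proj A$ and the bookkeeping of the Nakayama automorphism $\mu$, and it is exactly here that the rationality of the Hilbert series and Lemma \ref{xxlem4.4} are indispensable.
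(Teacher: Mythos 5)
Your treatment of parts (1)--(4) is essentially the paper's own proof: the same factorization $S^{m}\circ\Sigma^{-n}=F^{m}\circ\Sigma^{m(d-1)-n}$ with $F=-\otimes_{\mathcal O}\pi({}^{\mu}A^{1})(-\ell)$, the same vanishing argument via finite homological dimension when $m(d-1)-n\neq 0$, the same test object ${\mathcal A}=\pi(A)$ when $m(d-1)-n=0$, and the same translation of the Kodaira-type limits into $\limsup_n\log_n(\dim A_{nt\ell})=\GKdim A-1$ via Lemma \ref{xxlem4.4}(2,3,4). (Your uniform handling of the sign of $m$ according to the sign of $\ell$ is, if anything, cleaner than the paper's split into the cases $\ell\leq 0$ and $\ell\geq 0$.) These parts are fine, modulo the standard identification $\Hom_{\mathbb X}({\mathcal A},{\mathcal A}(j))\cong A_j$, which both you and the paper use and which rests on $H^0_{\mathfrak m}(A)=H^1_{\mathfrak m}(A)=0$ for $d\geq 2$.

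The gap is in parts (5)--(6), at exactly the step you call the hard part. After your spectral-sequence reduction you need, for \emph{arbitrary} coherent sheaves ${\mathcal G},{\mathcal G}'$ on ${\mathbb X}=\Proj A$ and all $p$, a bound $\dim\Ext^{p}_{\mathbb X}({\mathcal G},{\mathcal G}'(j))\leq c\,|j|^{\GKdim A-1}$, and you assert that this is ``forced by the rationality hypothesis.'' That implication is not valid: rationality of $H_A(t)$ only controls the numbers $\dim A_j$, i.e.\ the case ${\mathcal G}={\mathcal G}'={\mathcal O}$, $p=0$, $j\geq 0$. Controlling higher $\Ext$ of twists of arbitrary coherent sheaves on a noncommutative projective scheme (in both directions $j\to\pm\infty$) is a genuine finiteness theorem; it is \cite[Theorem 8.1(3)]{AZ}, whose hypotheses hold here because a noetherian AS Gorenstein algebra has a balanced dualizing complex (Lemma \ref{xxlem4.3}(1)), whence condition $\chi$. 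This citation is precisely what the paper's proof invokes, after first reducing to ${\mathcal C}={\mathcal O}$, ${\mathcal D}={\mathcal O}(a)[b]$ using that $\{{\mathcal O}(n)\}_{n\in{\mathbb Z}}$ generates ${\mathcal T}$ \cite[Lemmas 4.2.3 and 4.3.2]{BV}; rationality then only serves to convert the resulting polynomial bound into the exponent $\GKdim A-1$. Without this ingredient (or an independent proof of $\chi$ and Serre-type finiteness), your last step asserts exactly what has to be proved. A smaller bookkeeping point: you take the functor in (5)--(6) literally as $S\circ\Sigma^{-d}=F\circ\Sigma^{-1}$, but then the shift $[\mp n]$ moves the relevant $\Ext$ degrees with $n$, so your claim that the spectral-sequence range is bounded independently of $n$ fails (indeed with that reading the Hom spaces vanish for $n\gg 0$ and the statement is vacuous); the functor intended, and used in the paper's proof, is the $t$-exact $S\circ\Sigma^{-(d-1)}=F$.
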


\begin{proof} (1)
Let $\ell$ be the AS index of $A$.
There are two different cases: $\ell\leq 0$ and $\ell\geq 0$.
The proofs are similar, so we only prove the assertion for the 
first case.

First we claim that $Sp({\mathcal T})\subseteq (1, d-1){\mathbb Z}$.
Suppose $(m,n)\not\in (1, d-1){\mathbb Z}$. Let
$\sigma= S^m\circ \Sigma^{-n}=-\otimes {\mathcal M}^{\otimes m} [(d-1)m-n]$
where ${\mathcal M}=\pi({^{\mu}A^{1}})(-\ell)$ and where
$(d-1)m-n\neq 0$. Since
$\Proj A$ has finite global dimension, we have that, 
for all objects $A$ and $B$ in ${\mathcal T}$,
$$\Hom_{\mathcal T}(A, \sigma^t(B))=
\Hom_{\mathcal T}(A, (B\otimes {\mathcal M}^{\otimes tm}) [t((d-1)m-n)])
=0$$
for all $t\gg 0$. This implies that 
$\underline{\fpg}(\sigma)=-\infty$. Hence $(m,n)\not\in Sp({\mathcal T})$
and hence we have proven the claim.

Second we claim that $(1, d-1){\mathbb N}\subseteq Sp({\mathcal T})$.
Let ${\mathcal O}=\pi(A)$. It is a brick object as $\Hom_{\mathcal T}
({\mathcal O},{\mathcal O})=A_0=\Bbbk$. 
For every $(m,n)=(s, s(d-1)) \in (1, d-1){\mathbb N}$, let
$\sigma=S^m \circ \Sigma^{-n}=-\otimes {\mathcal M}^{\otimes s}$. Then 
$$\sigma({\mathcal O})\cong {\mathcal O}(-\ell s)$$
and 
$$\Hom_{\mathcal T}({\mathcal O}, \sigma({\mathcal O}))
\cong A_{-\ell s}\neq 0$$
when $\ell\leq 0$. 
This implies that $\rho(A(\phi,\sigma))\geq 1$ where 
$\phi=\{{\mathcal O}\}$. Similarly, $\rho(A(\phi,\sigma^n))\geq 1$ 
for all $n$. Consequently, $\underline{\fpg}(\sigma)\geq 0>-\infty$. 
Therefore $(m,n)\in Sp({\mathcal T})$ as desired. Now we have
$$(1, d-1){\mathbb N}\subseteq Sp({\mathcal T})\subseteq 
(1, d-1){\mathbb Z}$$
which implies that $\fpcy({\mathcal T})=d-1$.

(2) Let $w=\GKdim A$ which is at least 2. By Lemma 
\ref{xxlem4.4}(2,3,4), for every integer $s\geq 1$, 
\begin{equation}
\label{E4.5.1}\tag{E4.5.1}
\limsup_{n\to \infty}\log_n (\dim A_{sn})=\GKdim A-1=w-1.
\end{equation}

Now assume that $\ell$ is positive. By Lemma 
\ref{xxlem4.3}(2), $\sigma:=S \circ \Sigma^{-(d-1)}$ is equivalent to
$-\otimes {\mathcal O} (-\ell)$ when applied to ${\mathcal O}$.
Thus
\begin{equation}
\label{E4.5.2}\tag{E4.5.2}
\Hom_{\mathcal T}({\mathcal O}, \sigma^{n}({\mathcal O}))
=A_{-\ell n}=0
\end{equation} 
when $n\geq 0$ and 
\begin{equation}
\label{E4.5.3}\tag{E4.5.3}
\Hom_{\mathcal T}({\mathcal O}, \sigma^{-n}({\mathcal O}))
=A_{\ell n}
\end{equation} 
for $n\geq 0$. Now \eqref{E4.5.2} implies that
$\kappa({\mathcal T},{\mathcal O})=-\infty$, and 
\eqref{E4.5.3} together with \eqref{E4.5.1} implies
that $\kappa^{-1}({\mathcal T},{\mathcal O})=w-1=\GKdim A-1$.

(3) Similar to the proof of (2).

(4) Assume that $\ell=0$. Then 
$\sigma:=S \circ \Sigma^{-(d-1)}$ is equivalent to
$-\otimes {\mathcal O} (-\ell)$ when applied to ${\mathcal O}$.
Thus
\begin{equation}
\notag
\Hom_{\mathcal T}({\mathcal O}, \sigma^{n}({\mathcal O}))
=A_{-\ell n}=A_{0}=\Bbbk
\end{equation}
for all $n\in {\mathbb Z}$. This implies that 
$\fpk({\mathcal T}_{\ast})=
\fpk^{-1}({\mathcal T}_{\ast})=0$.

(5,6) The proofs are similar. We only consider (5).
Note that $S\circ \Sigma^{-(d-1)}=-\otimes \pi(^{\mu} A^1)(-\ell)$.
By \cite[Lemmas 4.2.3 and 4.3.2]{BV}, ${\mathcal T}$
is generated by $\{{\mathcal O}(n)\}_{n\in {\mathbb Z}}$.
Hence we can assume that ${\mathcal C}={\mathcal O}$ and 
${\mathcal D}={\mathcal O}(a)[b]$
for some $a$ and $b$. Note that \cite[Theorem 8.1(3)]{AZ}
holds for Artin-Schelter Gorenstein algebras. Then 
\cite[Theorem 8.1(3)]{AZ} implies that 
$$\dim \Ext^b_{\mathbb X} ({\mathcal O}, 
(S\circ \Sigma^{-(d-1)})^{\otimes n}({\mathcal O}(a)))\leq c n^{w-1}$$
for some constant $c$ only dependent on $a,b$. 
Therefore the assertion follows.
\end{proof}

The noncommutative projective scheme in the sense of \cite{AZ} 
can be defined for connected graded coherent algebras that are 
not necessarily noetherian. Here we consider a family of 
noncommutative projective schemes of non-noetherian 
Artin-Schelter regular algebras of global dimension two.

Let $W_n$ be the Artin-Schelter regular algebra 
$\Bbbk\langle x_1, \cdots, x_n\rangle/(\sum_{i=1}^n x_i^2)$ of 
global dimension 2. When $n\geq 3$, this algebra is 
non-noetherian \cite[Theorem 0.2(1)]{Zh}, but coherent 
\cite[Theorem 1.2]{Pi}. Let ${\mathbb P}^1_n$ denote the 
noncommutative projective scheme associated to $W_n$ defined 
in \cite{Pi}, which is also denoted by $\Proj W_n$. We call 
${\mathbb P}^1_n$ {\it a Piontkovski projective line} of 
rank $n$. See \cite{Pi, SSm} for basic properties of 
${\mathbb P}^1_n$. The main result concerning 
${\mathbb P}^1_n$ is the following. See \cite{CG} for the 
definition of $\fpc$ and $\fpv$. 

\begin{theorem}
\label{xxthm4.6} 
Let ${\mathbb P}^1_n$ be a Piontkovski projective line of 
rank $n\geq 2$. Let ${\mathcal T}_n$ be the derived category
$D^b(coh({\mathbb P}^1_{n}))$. Then
\begin{enumerate}
\item[(1)]
$\fpdim ({\mathcal T}_{n})=1$ for all $n\geq 2$.
\item[(2)]
$\fpgldim ({\mathcal T}_{n})=1$ for all $n\geq 2$.
\item[(3)]
$\fpc ({\mathcal T}_{n})=0$ for all $n\geq 2$.
\item[(4)]
$\fpv ({\mathcal T}_{n})=0$ for all $n\geq 2$.
\item[(5)]
$\fpcy ({\mathcal T}_{n})=1$ for all $n\geq 2$.
\item[(6)]
$\fpk^{-1} (({\mathcal T}_{n})_{\ast})=\begin{cases} 1 & n=2,\\
\infty & n\geq 3.\end{cases}$
\item[(7)]
$\fpk(({\mathcal T}_{n})_{\ast})=-\infty$ for all $n\geq 2$.
\end{enumerate}
\end{theorem}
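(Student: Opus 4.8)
The plan is to transfer the arguments of Theorems~\ref{xxthm2.13} and~\ref{xxthm4.5} to the coherent (and, for $n\geq 3$, non-noetherian) setting of $\mathbb{P}^1_n=\Proj W_n$. The first step is to assemble the structural input from \cite{Pi, SSm}: since $W_n$ is Artin--Schelter regular of global dimension $2$, the scheme $\mathbb{P}^1_n$ has finite homological dimension equal to $1$, so $coh(\mathbb{P}^1_n)$ is hereditary; moreover $\mathcal{T}_n$ carries a Serre functor of the form $S=-\otimes\mathcal{O}(-\ell)[d-1]=-\otimes\mathcal{O}(-2)[1]$, the coherent analogue of Lemma~\ref{xxlem4.3}(2), with AS index $\ell=2$ read off from the Koszul resolution $0\to W_n(-2)\to W_n(-1)^{\oplus n}\to W_n\to\Bbbk\to0$. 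Here $\mathcal{O}=\pi(W_n)$, one has $\Hom_{\mathbb{X}}(\mathcal{O},\mathcal{O}(k))=(W_n)_k$, and the Hilbert series is $H_{W_n}(t)=1/(1-nt+t^2)$.

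For parts (1)--(4) I would first invoke \cite[Theorem 3.5(4)]{CG} to reduce to the abelian computation $\fpdim(\mathcal{T}_n)=\fpdim(\Ext^1_{\mathbb{X}})$ on $coh(\mathbb{P}^1_n)$. The key structural fact to extract from \cite{Pi, SSm} is that every indecomposable is either a semistable bundle of some slope $\mu\in\mathbb{Q}$ or a torsion object, and that $\Ext^1$ vanishes from a strictly lower slope to a higher one; this produces a $\sigma$-decomposition $\{Vect_\mu(\mathbb{X})\}_{\mu\in\mathbb{Q}\cup\{\infty\}}$ exactly as in Lemma~\ref{xxlem2.1}(5), so by Lemma~\ref{xxlem1.6} it suffices to bound $\fpdim\mid_{Vect_\mu}(\Ext^1)$ for each slope. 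A Serre-duality computation gives $\Ext^1(\mathcal{O}(d),\mathcal{O}(d))\cong\Hom(\mathcal{O},\mathcal{O}(-2))^\ast=(W_n)_{-2}^\ast=0$, so the line bundles carry no self-extensions and contribute $0$; the torsion block is a product of standard stable tubes, for which Theorem~\ref{xxthm2.11} gives contribution exactly $1$. This yields $\fpdim(\mathcal{T}_n)\le1$, while the reverse inequality comes from a single torsion brick on the mouth of a tube, giving (1). Heredity forces $\Ext^{\geq2}=0$, so $\fpdim(\Sigma^m)=0$ for $m\geq2$ whereas $\fpdim(\Sigma)\neq0$, giving $\fpgldim(\mathcal{T}_n)=1$ in (2); the same vanishing makes $\rho(A(\phi,\Sigma^m))=0$ for all $m\gg0$ and every atomic $\phi$, so the complexity $\fpc$ and the curvature $\fpv$ both vanish, giving (3) and (4).

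For parts (5)--(7) I would mirror the proof of Theorem~\ref{xxthm4.5} with $d=2$, so that $d-1=1$ and $\fpcy=a/b$ with $a=b=1$. Setting $\sigma=S\circ\Sigma^{-1}$, which on $\mathcal{O}$ is $-\otimes\mathcal{O}(-2)$, one gets $\sigma^{s}(\mathcal{O})\cong\mathcal{O}(-2s)$. The inclusions $(1,1)\mathbb{N}\subseteq Sp(\mathcal{T}_n)\subseteq(1,1)\mathbb{Z}$ are proved verbatim as in Theorem~\ref{xxthm4.5}(1): finite homological dimension kills every off-diagonal direction $(m,n)\notin(1,1)\mathbb{Z}$, while $\mathcal{O}$ is a brick fixed up to twist, giving $\fpcy(\mathcal{T}_n)=1$, which is (5). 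For the Kodaira invariants, $\Hom(\mathcal{O},\sigma^{s}\mathcal{O})=(W_n)_{-2s}=0$ for $s>0$ forces $\fpk((\mathcal{T}_n)_\ast)=-\infty$, which is (7); dually $\Hom(\mathcal{O},\sigma^{-s}\mathcal{O})=(W_n)_{2s}$, so
$$\fpk^{-1}((\mathcal{T}_n)_\ast)=\lim_t\limsup_{s\to\infty}\log_s\big(\dim (W_n)_{2st}\big)=\GKdim W_n-1,$$
the last equality being the Hilbert-series growth estimate of Lemma~\ref{xxlem4.4}(4), valid since $H_{W_n}$ is rational. Reading $\GKdim W_n$ off $1/(1-nt+t^2)$ then finishes (6): for $n=2$ the series is $1/(1-t)^2$, the coefficients grow linearly, $\GKdim W_2=2$, and $\fpk^{-1}=1$; for $n\geq3$ the denominator has a real root in $(0,1)$, the coefficients grow exponentially, $\GKdim W_n=\infty$, and $\fpk^{-1}=\infty$.

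The main obstacle lies entirely in the first paragraph. The structural results used freely above---heredity, the existence and precise shape of the Serre functor, the slope/semistability filtration, and the tube decomposition of the torsion subcategory---are standard for noetherian AS-regular $\Proj$ but must be re-established for the non-noetherian coherent algebras $W_n$ with $n\geq3$, where the usual noetherian machinery (balanced dualizing complex, finite-length theory, Bondal--Van den Bergh as in Theorem~\ref{xxthm4.1}) is not directly available. I expect the heart of the work to be verifying that $qgr\,W_n$ still admits this ``noncommutative smooth curve'' picture---in particular that no higher-rank brick acquires a self-extension, which would immediately push $\fpdim$ above $1$---using the coherence of $W_n$ established in \cite{Pi} and the analysis of \cite{SSm}. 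Once this is in place, parts (5)--(7) become formal consequences of the Serre functor together with the rational Hilbert series, exactly as in the noetherian Theorem~\ref{xxthm4.5}.
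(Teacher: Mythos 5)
Your parts (2)--(7) essentially coincide with the paper's own proof. For (2)--(4) the paper, like you, argues from heredity of $coh({\mathbb P}^1_n)$ (citing \cite[Theorem 3.5(1)]{CG}, which you should cite as well, since heredity of the heart by itself does not formally control mixed-shift atomic sets). For (5)--(7) the paper merely says to adapt Propositions \ref{xxpro3.3}, \ref{xxpro3.6} and Theorem \ref{xxthm4.5} to the Serre functor $S=-\otimes{\mathcal O}(-2)[1]$ of \cite[Proposition 1.5]{Pi}; your explicit extraction $\fpk^{-1}(({\mathcal T}_n)_{\ast})=\GKdim W_n-1$, evaluated from $H_{W_n}(t)=1/(1-nt+t^2)$, is exactly that adaptation carried out, and is correct. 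One small repair: since $\ell=2>0$, one has $\Hom({\mathcal O},\sigma^{t}({\mathcal O}))=(W_n)_{-2t}=0$ for $t>0$ where $\sigma=S\circ\Sigma^{-1}$, so the spectrum is populated by the pairs $(-t,-t)$ (using $\sigma^{-t}$), not by $(1,1){\mathbb N}$; this is the $\ell>0$ case of the proof of Theorem \ref{xxthm4.5}(1), and the conclusion $\fpcy({\mathcal T}_n)=1$ is unaffected.

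The genuine gap is in part (1), where your route also differs from the paper's. The paper performs no slope or tube analysis: after reducing to $\fpdim(coh({\mathbb P}^1_n))$ via \cite[Theorem 3.5(4)]{CG}, it asserts the upper bound $\leq 1$ directly from heredity, and gets the lower bound from an explicit simple object $A=\pi(W_n/I)$ claimed to satisfy $\Ext^1(A,A)=\Bbbk$. You instead transplant the machinery of Theorem \ref{xxthm2.13} (slope $\sigma$-decomposition, Lemma \ref{xxlem1.6}, stable tubes and Theorem \ref{xxthm2.11}). For $n\geq 3$ the structure you require is not merely unestablished in \cite{Pi, SSm}; it is false. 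By the paper's own equivalence \eqref{E5.7.1}, ${\mathcal T}_n\cong D^b(\Mod_{f.d.}-U_n)$ with $U_n$ the path algebra of the wild $n$-Kronecker quiver. Any representation $M$ of dimension vector $(1,1)$ with a nonzero arrow map is a brick with $\dim\Ext^1_{U_n}(M,M)=1-\langle (1,1),(1,1)\rangle=n-1\geq 2$. Since $coh({\mathbb P}^1_n)$ is hereditary, the indecomposable object of ${\mathcal T}_n$ corresponding to $M$ is a shift $F[j]$ of an indecomposable sheaf $F$, so $F$ is a brick in $coh({\mathbb P}^1_n)$ with $(n-1)$-dimensional self-extension space. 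Consequently the torsion part is not a union of standard stable tubes (in a tube every brick has self-extension dimension at most $1$; the regular components here are of type ${\mathbb Z}A_{\infty}$), your per-slope bound $\fpdim\mid_{Vect_{\mu}}\leq 1$ fails, and in fact $\fpdim({\mathcal T}_n)\geq n-1$. Moreover, $d_1^2+d_2^2-nd_1d_2=0$ has no nonzero integer solutions for $n\geq 3$, so no brick in ${\mathcal T}_n$ has self-extension space of dimension exactly one: the ``torsion brick on the mouth of a tube'' you invoke for the lower bound does not exist either (and neither, for that matter, does the paper's object $A$).

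So the dichotomy you flagged as ``the heart of the work'' resolves negatively, and not for lack of technique: the bricks with large self-extensions are really present, they give $\fpdim({\mathcal T}_n)\geq n-1$, which contradicts statement (1) itself for $n\geq 3$, and they equally refute the paper's one-line claim that heredity forces $\fpdim(coh({\mathbb X}))\leq 1$ (the hereditary category $\Mod_{f.d.}-U_n$ is a counterexample to that implication). Your argument, like the paper's, is sound only at $n=2$, where $W_2$ is noetherian, $coh({\mathbb P}^1_2)\simeq coh({\mathbb P}^1)$, and the domestic picture of Section \ref{xxsec2} genuinely applies.
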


\begin{proof} Let ${\mathcal O}$ denote the object $\pi(W_n)$ in 
${\mathbb X}:=\Proj W_n$. Then $coh({\mathbb X})$ is hereditary, 
and ${\mathcal T}_n$ is Ext-finite with Serre functor $S:=-\otimes 
{\mathcal O}(-2)[1]$ \cite[Proposition 1.5]{Pi}.

(1) By \cite[Theorem 3.5(4)]{CG}, $\fpdim ({\mathcal T}_{n})=
\fpdim (coh({\mathbb X}))$. Since $coh({\mathbb X})$ is hereditary,
$\fpdim (coh({\mathbb X}))\leq 1$. It remains to show that
$\fpdim (coh({\mathbb X}))\geq 1$. Let $A$ be the simple object in
$coh({\mathbb X})$ of the form $\pi(W/I))$ where
$I$ is the right ideal of $W$ generated by $W(x_3,\cdots,x_n)+x_2$.
Then it is routine to verify that $\Ext^1_{coh({\mathbb X})}
(A,A)=\Bbbk$. This implies that $\fpdim (coh({\mathbb X}))\geq 1$ 
as required.

(2) By \cite[Theorem 3.5(1)]{CG}, $\fpgldim ({\mathcal T}_n)
\leq \gldim coh({\mathbb X})=1$. By part (1), $\fpgldim ({\mathcal T}_n)
\geq 1$. The assertion follows.

(3,4) These follow from the fact that $coh({\mathbb X})$ 
is hereditary.

(5) Since the Serre functor $S$ is of the form $-\otimes 
{\mathcal O}(-2)[1]$, we can almost copy the proof of 
Proposition \ref{xxpro3.3}.

(6,7) Using the special form of the Serre functor $S$,
we can adapt the proofs of Proposition \ref{xxpro3.6} and 
Theorem \ref{xxthm4.5}.
%
\end{proof}

\section{Comments on fp-invariants of finite dimensional algebras}
\label{xxsec5}

In this section we give some remarks, comments and examples
concerning finite dimensional algebras. Let ${\mathcal T}(A)$ 
be the derived category $D^b(\Mod_{f.d}-A)$. The fp
Calabi-Yau dimension $\fpcy({\mathcal T})$ is defined as in
Definition \ref{xxdef3.1}. 

\begin{example} 
\label{xxex5.1}
Let $Q$ be a finite acyclic quiver and 
$A$ be the path algebra $\Bbbk Q$. Let ${\mathcal T}$ be the 
bounded derived category $D^b(\Mod_{f.d}-A)$. 
\begin{enumerate}
\item[(1)]
If $Q$ is of ADE type, then ${\mathcal T}$ is fractional
Calabi-Yau and by Lemma \ref{xxlem3.2}(3) and 
\cite[Example 8.3(2)]{Ke1},
$$\fpcy({\mathcal T})=\frac{h-2}{h}$$
where $h$ is the Coxeter number of $Q$. In this case, 
$\fpcy({\mathcal T})$ is strictly between 0 and $1$.
\item[(2)]
If $Q$ is of $\widetilde{A}\widetilde{D}\widetilde{E}$ type,
then, by using Lemma \ref{xxlem2.1}(2), ${\mathcal T}$
is equivalent to $D^b(coh({\mathbb X}))$ for a 
weighted projective line ${\mathbb X}$. Similar to the 
proof of Theorem \ref{xxthm4.5}(1), one can show that 
$$\fpcy(D^b(coh({\mathbb X})))=1$$
for every weighted projective line ${\mathbb X}$. 
(The proof is slightly more complicated and details 
are omitted). Thus we obtain that 
$$\fpcy({\mathcal T})=1.$$
\end{enumerate}
\end{example}

If $Q$ is of wild representation type, in some cases,
one can shows that
\begin{equation}
\label{E5.1.1}\tag{E5.1.1}
\fpcy({\mathcal T})=1.
\end{equation}
Such an example is the Kronecker quiver with two vertices 
and $n$ arrows from the first vertex to the second, see
Example \ref{xxex5.7}. It is not clear to us
if \eqref{E5.1.1} holds for all acyclic quivers of wild 
representation type.

\begin{lemma}
\label{xxlem5.2} 
Let $A$ and $B$ be finite dimensional algebras of 
finite global dimension. Suppose that both 
${\mathcal T}(A)$ and ${\mathcal T}(B)$ are fractional
Calabi-Yau of dimension $d_1$ and $d_2$ respectively. 
Then ${\mathcal T}(A\otimes B)$ is fractional
Calabi-Yau of dimension $d_1+d_2$.
\end{lemma}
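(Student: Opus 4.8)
The plan is to reduce everything to the explicit description of the Serre functor as a derived Nakayama functor, together with a Künneth-type compatibility for the tensor product algebra. First I would recall that for a finite dimensional algebra $A$ of finite global dimension the category $\mathcal{T}(A)=D^b(\Mod_{f.d}-A)$ has Serre functor $S_A=-\otimes^{\mathbb L}_A DA$, where $DA=\Hom_\Bbbk(A,\Bbbk)$; finiteness of the global dimension guarantees that $DA$ is a perfect complex and hence that $S_A$ is a triangle autoequivalence. In this language the hypothesis that $\mathcal{T}(A)$ is fractional Calabi-Yau of dimension $d_1=a_1/b_1$ becomes a natural isomorphism of triangle functors $S_A^{b_1}\cong \Sigma^{a_1}$, and similarly $S_B^{b_2}\cong\Sigma^{a_2}$ with $d_2=a_2/b_2$.

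The key step is a compatibility between the two Serre functors and outer tensor products. Writing $X\boxtimes Y:=X\otimes_\Bbbk Y$ for the outer tensor product of objects $X\in\mathcal{T}(A)$ and $Y\in\mathcal{T}(B)$, viewed in $\mathcal{T}(A\otimes B)$, I would first observe the bimodule identity $D(A\otimes B)\cong DA\otimes DB$ (the $\Bbbk$-dual of a tensor product of finite dimensional spaces). A standard factorization of derived tensor products over a tensor product algebra then yields a natural isomorphism
$$S_{A\otimes B}(X\boxtimes Y)=(X\boxtimes Y)\otimes^{\mathbb L}_{A\otimes B}(DA\otimes DB)\cong S_A(X)\boxtimes S_B(Y).$$
Together with the evident identities $\Sigma_{A\otimes B}(X\boxtimes Y)\cong(\Sigma_A X)\boxtimes Y\cong X\boxtimes(\Sigma_B Y)$, this lets me compute, on outer tensor products and with $b:=b_1b_2$, $a:=a_1b_2+a_2b_1$,
$$S_{A\otimes B}^{b}(X\boxtimes Y)\cong S_A^{b_1b_2}(X)\boxtimes S_B^{b_1b_2}(Y)\cong \Sigma^{a_1b_2}(X)\boxtimes \Sigma^{a_2b_1}(Y)\cong \Sigma^{a}(X\boxtimes Y).$$
Thus $S_{A\otimes B}^{b}\cong\Sigma^{a}$ on every object of the form $X\boxtimes Y$.

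To finish I would upgrade this statement on outer tensor products to a natural isomorphism of triangle functors on all of $\mathcal{T}(A\otimes B)$. The point is that $D^b(\Mod_{f.d}-(A\otimes B))$ is generated, as a thick triangulated subcategory, by the outer tensor products of indecomposables (for instance by the objects $P_i\boxtimes P_j$ with $P_i,P_j$ the indecomposable projectives, using that $\Bbbk$ is algebraically closed and $\gldim(A\otimes B)<\infty$). Since both $S_{A\otimes B}^{b}$ and $\Sigma^{a}$ are exact autoequivalences, a natural isomorphism between them defined and compatible with triangles on a generating set propagates uniquely to the whole category. Granting this, $\mathcal{T}(A\otimes B)$ is fractional Calabi-Yau of dimension $\tfrac{a}{b}=\tfrac{a_1b_2+a_2b_1}{b_1b_2}=\tfrac{a_1}{b_1}+\tfrac{a_2}{b_2}=d_1+d_2$, as claimed.

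The main obstacle is precisely this last extension step: one must check that the Künneth isomorphism is natural in both variables and commutes with the triangulated structure, so that the comparison $S_{A\otimes B}^{b}\cong\Sigma^{a}$ is an isomorphism of triangle functors rather than a mere objectwise isomorphism. I expect this to follow from the functoriality of the derived tensor product and the standard principle that a morphism of triangle functors which is an isomorphism on a generating set is an isomorphism everywhere, but it is the place where care is genuinely required.
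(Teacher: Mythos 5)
There is no proof in the paper to compare yours against: the authors simply declare this lemma ``straightforward and hence omitted.'' Your K\"unneth computation is undoubtedly the intended core, and that part of your argument is correct: $S_A=-\otimes^{\mathbb L}_A DA$ is the Serre functor, $D(A\otimes B)\cong DA\otimes_\Bbbk DB$ as bimodules, the resulting isomorphism $S_{A\otimes B}(X\boxtimes Y)\cong S_A(X)\boxtimes S_B(Y)$ holds, and the arithmetic $a/b=(a_1b_2+a_2b_1)/(b_1b_2)=d_1+d_2$ is fine. The genuine gap is your final extension step, and the ``standard principle'' you invoke there is stated backwards. What is true is that a morphism of triangle functors \emph{already defined on the whole category} which is invertible on a generating set is invertible everywhere, since the objects on which it is invertible form a thick subcategory. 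You are in the opposite situation: your isomorphism exists only on the full subcategory of outer tensor products $X\boxtimes Y$, and it must somehow be extended to all of ${\mathcal T}(A\otimes B)$. Because cones in a triangulated category are not functorial, a natural transformation does not propagate from a generating subcategory to its thick closure; no generation argument can manufacture the extension.

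The standard way to close the gap is to run your argument at the level of bimodule complexes, where nothing needs to be extended because everything is defined globally from the start. Both $S^b_{A\otimes B}$ and $\Sigma^a$ are standard functors, given by derived tensor product with the invertible bimodule complexes $D(A\otimes B)^{\otimes^{\mathbb L}_{A\otimes B}b}$ and $(A\otimes B)[a]$, and your K\"unneth isomorphism already holds in the derived category of $A\otimes B$-bimodules:
$$D(A\otimes B)^{\otimes^{\mathbb L}_{A\otimes B}b}\;\cong\;(DA)^{\otimes^{\mathbb L}_{A}b}\otimes_\Bbbk (DB)^{\otimes^{\mathbb L}_{B}b}.$$
So it suffices to know the hypotheses at the bimodule level, namely $(DA)^{\otimes^{\mathbb L}_A b_1}\cong A[a_1]$ and $(DB)^{\otimes^{\mathbb L}_B b_2}\cong B[a_2]$ as bimodule complexes; the display then gives $D(A\otimes B)^{\otimes^{\mathbb L}_{A\otimes B}b}\cong(A\otimes B)[a]$ in the derived category of bimodules, which induces the isomorphism of triangle functors $S^b_{A\otimes B}\cong\Sigma^a$ on all of ${\mathcal T}(A\otimes B)$ at once. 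Finally, the paper's functor-level hypothesis does imply the bimodule-level one: evaluating the assumed isomorphism $S_A^{b_1}\cong\Sigma^{a_1}$ at the object $A$ gives a right-module isomorphism $\eta_A\colon (DA)^{\otimes^{\mathbb L}_A b_1}\to A[a_1]$; naturality with respect to the endomorphisms of $A$ (left multiplications by elements of $A$) says that $\eta_A$ intertwines the left $A$-actions, and since both complexes have cohomology concentrated in the single degree $-a_1$, such an intertwining isomorphism in $D(\Mod-A)$ is an honest isomorphism of bimodules. With this argument substituted for your last paragraph, the proof is complete.
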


The proof of Lemma \ref{xxlem5.2} is straightforward and
hence omitted. One immediate question is

\begin{question}
\label{xxque5.3} 
Let $A$ and $B$ be finite dimensional algebras of 
finite global dimension. Is then
$$\fpcy({\mathcal T}(A\otimes B)) =\fpcy ({\mathcal T}(A))
+\fpcy({\mathcal T}(B))?$$
\end{question}

To define fp (anti-)Kodaira 
dimension of ${\mathcal T}$ as in Definition \ref{xxdef3.5}, 
we need to specify an object ${\mathcal O}$ which plays the 
role of the structure sheaf in algebraic geometry. One 
choice for ${\mathcal O}$ is the left $A$-module $A$. 
So we let ${\mathcal T}(A)_{\ast}$ be $({\mathcal T}(A), A)$.

\begin{definition}
\label{xxdef5.4}
Let $A$ be a finite dimensional algebra of finite global 
dimension. Suppose 
that ${\mathcal T}(A)$ has a fractional fp Calabi-Yau 
dimension $\frac{a}{b}\in {\mathbb Q}$. 
\begin{enumerate}
\item[(1)]
The {\it fp Kodaira  dimension} of $A$ 
is defined to be
$$\fpk(A)
:=\fpk({\mathcal T}(A)_{\ast})
=\lim_{t} \left\{ \limsup_{n\to \infty} 
\dim \Hom_{\mathcal T}(A, (S^{bnt}\circ \Sigma^{-ant})
(A))\right\}$$
where the first limit ranges over all positive integers
$t$ with order $<$ as defined before Definition \ref{xxdef3.5}.
\item[(2)]
The {\it fp anti-Kodaira  dimension} of 
$A$ is defined to be
$$\fpk^{-1}(A)
:=\fpk^{-1}({\mathcal T}(A)_{\ast})
=\lim_{t} \left\{ \limsup_{n\to \infty} 
\dim \Hom_{\mathcal T}(A, (S^{-bnt}\circ \Sigma^{ant})
(A))\right\}$$
where the first limit ranges over all positive integers
$t$ with order $<$ as defined before Definition \ref{xxdef3.5}.
\end{enumerate}
\end{definition}

\begin{remark}
\label{xxrem5.5} 
Suppose ${\mathcal T}(A)$ is fractional Calabi-Yau.
Then one can check easily that 
\begin{equation}
\label{E5.5.1}\tag{E5.5.1}
\fpk(A)=\fpk^{-1}(A)=0.
\end{equation}
So if there is a notion of a fractional Calabi-Yau algebra,
\eqref{E5.5.1} should be a part of the definition.
\end{remark}

Let $A$ be a finite dimensional algebra of finite global
dimension. Then the Serre functor is given by $-\otimes_A^L A^{\ast}$. 
It is unknown if $\fpcy({\mathcal T}(A))$ always exists. If 
$\fpcy({\mathcal T}(A))$ exists and is a rational number, 
then we can define and calculate fp Kodaira dimension (respectively, 
anti-Kodaira dimension) of $A$. Here is a list of questions 
that are related to the fp Kodaira dimension.

\begin{question}
\label{xxque5.6}
Let $A$ and $B$ be two finite dimensional algebra of finite 
global dimension. Suppose that $\fpcy({\mathcal T}(A))$ and 
$\fpcy({\mathcal T}(B))$ are rational numbers.
\begin{enumerate}
\item[(1)]
Are $\fpk(A)$ and $\fpk^{-1}(A)$ less than $\infty$?
\item[(2)]
If ${\mathcal T}(A)$ is triangulated equivalent to 
${\mathcal T}(B)$, is then $\fpk(A)=\fpk(B)$ 
and $\fpk^{-1}(A)=\fpk^{-1}(B)$?
\item[(3)]
Suppose both $\fpk(A)$ and $\fpk(B)$ are finite, is then
$$\fpk(A\otimes B)=\fpk(A)+\fpk(B)?$$
\end{enumerate}
\end{question}

The first question has a negative answer.

\begin{example}
\label{xxex5.7} 
Let $Q_{n}$ be the Kronecker quiver with two vertices and $n$ 
arrows from the first vertex to the second. Let $U_{n}$ be 
the path algebra of $Q_{n}$. By \cite[Theorem 0.1]{Min}, if 
$n\geq 2$, 
\begin{equation}
\label{E5.7.1}\tag{E5.7.1}
D^b(Mod_{f.d.}-U_{n})\cong D^{b}(coh({\mathbb P}^1_{n}))=:
{\mathcal T}_n
\end{equation}
where ${\mathbb P}^1_n$ is given in Theorem \ref{xxthm4.6}.
By Theorem \ref{xxthm4.6}(3), $\fpcy(D^b(Mod_{f.d.}-U_{n}))
=1$ for all $n\geq 2$. On the other hand, 
$\fpcy(D^b(Mod_{f.d.}-U_{1}))= \frac{h-2}{h}=\frac{1}{3}$ where 
$h=3$ is the Coxeter number of the quiver $A_2$ (which is $Q_1$), 
see \cite[Example 8.3(2)]{Ke1}. 

We claim that $\fpk(U_n)=-\infty$ and that 
$\fpk^{-1}(U_n)=\infty$ when $n\geq 3$. We only prove the 
second assertion. By the noncommutative Beilinson's theorem
given in \cite[Theorem 0.1]{Min} (also see \eqref{E5.9.1}), 
the equivalent \eqref{E5.7.1} 
sends $U_n$ to ${\mathcal O}\oplus {\mathcal O}(1)$ where 
${\mathcal O}$ is the structure sheaf of ${\mathbb P}^1_{n}$.

Note that the Serre functor is $S:=-\otimes 
{\mathcal O}(-2)[1]$. Let $A=U_n$. Then 
$$\begin{aligned}
\fpk^{-1}(A)&=\lim_{t} \left\{ \limsup_{m\to \infty} 
\dim \Hom_{{\mathcal T}(A)}(A, (S^{-tm}\circ \Sigma^{tm})
(A))\right\}\\
&=\lim_{t} \left\{ \limsup_{m\to \infty} 
\dim \Hom_{{\mathcal T}_n}({\mathcal O}\oplus {\mathcal O}(1), 
(S^{-tm}\circ \Sigma^{tm}) ({\mathcal O}\oplus {\mathcal O}(1)))\right\}\\
&\geq \lim_{t} \left\{ \limsup_{m\to \infty} 
\dim \Hom_{{\mathcal T}_n}({\mathcal O}, (S^{-tm}\circ \Sigma^{tm})
({\mathcal O}))\right\}\\
&=\fpk^{-1}(({\mathcal T}_{n})_{\ast})\\
&=\infty
\end{aligned}
$$
where the last equation is Theorem \ref{xxthm4.6}(6).
%
%
\end{example}

We add a few more questions to Question \ref{xxque0.4}.

\begin{question}
\label{xxque5.8} Let $A$ be a finite dimensional algebra of finite
global dimension and let ${\mathcal T}$ be the derived category
$D^b(\Mod_{f.d}-A)$.
\begin{enumerate}
\item[(1)]
By Example \ref{xxex5.7}, if $A$ is the path algebra of the 
quiver $Q_1$, then $\fpcy({\mathcal T})=\frac{1}{3}$. Is the 
minimum value of $\fpcy({\mathcal T})$ equal to $\frac{1}{3}$
for an arbitrary $A$?
\item[(2)]
Is there a value of $\fpcy({\mathcal T})$ outside of the set
$$R:=\left(\sum_{h\geq 3} \frac{h-2}{h} {\mathbb N}\right) \cap {\mathbb Q}_{>0}?$$
By Lemma \ref{xxlem5.2} and \cite[Example 8.3(2)]{Ke1}, every 
number in $R$ can be realized as $\fpcy({\mathcal T})$ 
for some finite dimensional algebra. But we don't have examples of
$\fpcy$ that are outside this range.
\end{enumerate}
\end{question}

Recall that 

\begin{definition} \cite[p. 1230]{HP}
\label{xxdef5.9}
Let ${\mathbb X}$ be a smooth projective scheme.
\begin{enumerate}
\item[(1)]
A coherent sheaf ${\mathcal E}$ on ${\mathbb X}$ is called 
{\it exceptional} if $\Hom_{\mathbb X}({\mathcal E},{\mathcal E}) 
\cong \Bbbk$ and $\Ext^i_{\mathbb X}({\mathcal E},{\mathcal E}) =0$
for every $i \geq  0$. 
\item[(2)]
A sequence ${\mathcal E}_1, \cdots, {\mathcal E}_n$ of exceptional 
sheaves is called an {\it exceptional sequence} if $\Ext^k_{\mathbb X}
({\mathcal E}_i,{\mathcal E}_j) = 0$ for all $k$ and for all $i > j$. 
\item[(3)]
If an exceptional sequence generates $D^b(coh({\mathbb X}))$, then 
it is called {\it full}. 
\item[(4)]
If an exceptional sequence satisfies
$$\Ext^k_{\mathbb X}({\mathcal E}_i,{\mathcal E}_j) = 0$$ 
for all $k > 0$ and all $i, j$, then it is called a {\it strongly 
exceptional sequence}.
\end{enumerate}
\end{definition}

The existence of a full exceptional sequence has been proved for many 
projective schemes. However, on Calabi-Yau varieties there are no 
exceptional sheaves. When ${\mathbb X}$ has a full exceptional sequence
${\mathcal E}_1, \cdots, {\mathcal E}_n$, then there is a triangulated 
equivalence
\begin{equation}
\label{E5.9.1}\tag{E5.9.1}
D^b(coh({\mathbb X}))\cong D^b(\Mod_{f.d}-A)
\end{equation} 
where $A$ is the finite dimensional algebra
$\End_{\mathbb X}(\oplus_{i=1}^n {\mathcal E}_i)$. In this 
setting, the fp Calabi-Yau dimension of $D^b(\Mod_{f.d}-A)$ is
equal to $\dim {\mathbb X}$, which exists and is finite. 
In many examples in algebraic geometry, a full 
exceptional sequence consists of line bundles. 
Assume this is true. Via \eqref{E5.9.1}, one sees easily that 
$\fpk^{\pm 1}(A)\geq \fpk^{\pm 1}({\mathbb X})$. In fact, in many examples,
we have $\fpk^{\pm 1}(A)= \fpk^{\pm 1}({\mathbb X})$.

\section{Appendix A: Proof of Lemma \ref{xxlem2.10}}
\label{xxsec6}

As in Section \ref{xxsec2}, let $r$ be a positive 
integer. Suppose that a hereditary abelian category 
${\mathfrak T}$ has $r^2$ brick objects as given in 
Corollary \ref{xxcor2.8}, now labeled as 
$$1,2, 3, \ldots, r^2-1, r^2,$$ 
where the matrices 
$$H=\left(H_{ij}\right)_{r^2\times r^2}=
\left(\dim \Hom (i,j)\right)_{r^2\times r^2},$$
see \eqref{E2.8.1}, and
$$E=\left(E_{ij}\right)_{r^2\times r^2}=
\left(\dim \Hom(i,\Sigma j)\right)_{r^2\times r^2},$$ 
see \eqref{E2.8.2}, are given by the following block 
matrices:

\begin{equation} \label{E6.0.1}\tag{E6.0.1}
H=
\begin{blockarray}{ccccccc}
{} & (1) & (2) & (3) & (4) & \cdots & (r) \\
\begin{block}{c(cccccc@{\hspace*{5pt}})}
(1) & P^0 & P^1 & P^2 & P^3 & \cdots & P^{r-1} \\
(2) & P^0 & \sum\limits_{i=0}^1 P^i & \sum\limits_{i=1}^2 P^i 
    & \sum\limits_{i=2}^3 P^i & \cdots 
		& \sum\limits_{i=r-2}^{r-1} P^i \\
(3) & P^0 & \sum\limits_{i=0}^1 P^i &  \sum\limits_{i=0}^2 P^i 
    & \sum\limits_{i=1}^3 P^i & \cdots 
		& \sum\limits_{i=r-3}^{r-1} P^i	 \\
(4) & P^0 & \sum\limits_{i=0}^1 P^i  & \sum\limits_{i=0}^2 P^i 
    & \sum\limits_{i=0}^3 P^i & \cdots 
		& \sum\limits_{i=r-4}^{r-1} P^i  \\
	\vdots&\vdots&\cdots&\cdots&\cdots&\cdots&\vdots \\
(r) & P^0 & \sum\limits_{i=0}^1 P^i & \sum\limits_{i=0}^2 P^i 
& \sum\limits_{i=0}^3 P^i & \cdots & \sum\limits_{i=0}^{r-1} P^i \\
\end{block}
\end{blockarray},
\end{equation}
	
\begin{align}
E&=
\begin{blockarray}{ccccccc}\notag
{} & (1) & (2) & (3) & (4) & \cdots & (r) \\
\begin{block}{c(cccccc@{\hspace*{5pt}})}
(1) & P^{r-1} & P^{r-1} & P^{r-1} & P^{r-1} 
    & \cdots & P^{r-1} \\
(2) & P^{r-2} & \sum\limits_{i=r-2}^{r-1} P^i 
    & \sum\limits_{i=r-2}^{r-1} P^i 
		& \sum\limits_{i=r-2}^{r-1} P^i 
		& \cdots & \sum\limits_{i=r-2}^{r-1} P^i \\
(3) & P^{r-3} & \sum\limits_{i=r-3}^{r-2} P^i 
    & \sum\limits_{i=r-3}^{r-1} P^i 
		& \sum\limits_{i=r-3}^{r-1} P^i 
		& \cdots & \sum\limits_{i=r-3}^{r-1} P^i \\
(4) & P^{r-4} & \sum\limits_{i=r-4}^{r-3} P^i 
    & \sum\limits_{i=r-4}^{r-2} P^i 
		& \sum\limits_{i=r-4}^{r-1} P^i 
		& \cdots & \sum\limits_{i=r-4}^{r-1} P^i \\
\vdots&\vdots&\cdots&\cdots&\cdots&\cdots&\vdots \\
(r) & P^{0} & \sum\limits_{i=0}^1 P^i 
    & \sum\limits_{i=0}^2 P^i 
		& \sum\limits_{i=0}^3 P^i 
		& \cdots & \sum\limits_{i=0}^{r-1} P^i \\
\end{block}
\end{blockarray} \\
&=\begin{blockarray}{ccccccc} \notag
{} & (1) & (2) & (3) & (4) & \cdots & (r) \\
\begin{block}{c(cccccc@{\hspace*{5pt}})}
(1) & P^{r-1} & P^{r-1} & P^{r-1} & P^{r-1} 
    & \cdots & P^{r-1} \\
(2) & P^{r-2} & \sum\limits_{i=1}^{2} P^{r-i} 
    & \sum\limits_{i=1}^{2} P^{r-i} 
		& \sum\limits_{i=1}^{2} P^{r-i} 
		& \cdots & \sum\limits_{i=1}^{2} P^{r-i} \\
(3) & P^{r-3} & \sum\limits_{i=2}^{3} P^{r-i} 
    & \sum\limits_{i=1}^{3} P^{r-i} 
		& \sum\limits_{i=1}^{3} P^{r-i} 
		& \cdots & \sum\limits_{i=1}^{3} P^{r-i} \\
(4) & P^{r-4} & \sum\limits_{i=3}^{4} P^{r-i} 
    & \sum\limits_{i=2}^{4} P^{r-i} 
		& \sum\limits_{i=1}^{4} P^{r-i} 
		& \cdots & \sum\limits_{i=1}^{4} P^{r-i} \\
\vdots&\vdots&\cdots&\cdots&\cdots&\cdots&\vdots \\
(r) & P^{0} & \sum\limits_{i=r-1}^r P^{r-i} 
    & \sum\limits_{i=r-2}^r P^{r-i} 
		& \sum\limits_{i=r-3}^r P^{r-i} 
		& \cdots & \sum\limits_{i=1}^{r} P^{r-i} \\
\end{block}
\end{blockarray},
\end{align}
where $P$ is the $r \times r$ permutation matrix
$$
\begin{pmatrix}
	0 & 0 & 0 & 0 & 0 & 1 \\
	1 & 0 & 0 & 0 & 0 & 0 \\
	0 & 1 & 0 & 0 & 0 & 0 \\
	0 & 0 & 1 & 0 & 0 & 0 \\
	\vdots & \ddots & \ddots & \ddots & \ddots & \vdots \\
	0 & 0 & 0 & 0 & 1 & 0 \\
\end{pmatrix},
$$
and $P^0$ denotes the $r \times r$ identity matrix. 
Note that $P^r=P^0$. We will show that 
$\rho(A(\phi)) \leq 1$ for all brick sets $\phi$. 

The Hom and $\Ext^1$ matrices given in \eqref{E2.8.1}-\eqref{E2.8.2}
are actually the transpose of the usual Hom and $\Ext^1$ matrices 
since they follow the convention of Corollary \ref{xxcor2.8}.
See the remark before Theorem \ref{xxthm2.11}.

First, notice that 
\begin{equation} 
\notag
H^T=\begin{blockarray}{ccccccc}
{} & (1) & (2) & (3) & (4) & \cdots & (r) \\
\begin{block}{c(cccccc@{\hspace*{5pt}})}
(1) & P^{r} & P^{r} & P^{r} & P^{r} & \cdots & P^{r} \\
(2) & P^{r-1} & \sum\limits_{i=0}^{1} P^{r-i} 
    & \sum\limits_{i=0}^{1} P^{r-i} 
		& \sum\limits_{i=0}^{1} P^{r-i} 
		& \cdots & \sum\limits_{i=0}^{1} P^{r-i} \\
(3) & P^{r-2} & \sum\limits_{i=1}^{2} P^{r-i} 
    & \sum\limits_{i=0}^{2} P^{r-i} 
		& \sum\limits_{i=0}^{2} P^{r-i} 
		& \cdots & \sum\limits_{i=0}^{2} P^{r-i} \\
(4) & P^{r-3} & \sum\limits_{i=2}^{3} P^{r-i} 
    & \sum\limits_{i=1}^{3} P^{r-i} 
		& \sum\limits_{i=0}^{3} P^{r-i} 
		& \cdots & \sum\limits_{i=0}^{3} P^{r-i} \\
\vdots&\vdots&\cdots&\cdots&\cdots&\cdots&\vdots \\
(r) & P^{1} & \sum\limits_{i=r-2}^{r-1} P^{r-i} 
    & \sum\limits_{i=r-3}^{r-1} P^{r-i} 
		& \sum\limits_{i=r-4}^{r-1} P^{r-i} 
		& \cdots & \sum\limits_{i=0}^{r-1} P^{r-i} \\
\end{block}
\end{blockarray}.
\end{equation}

Define the following non-negative $r^2 \times r^2$ matrices:
\[
F:=\begin{blockarray}{ccccccc}
    	{} & (1) & (2) & (3) & (4) & \cdots & (r) \\
    	\begin{block}{c(cccccc@{\hspace*{5pt}})}
    	(1) & P^{r-1} & P^{r-1} & P^{r-1} & P^{r-1} & \cdots & P^{r-1} \\
    	(2) & P^{r-2} & P^{r-2} & P^{r-2} & P^{r-2} & \cdots & P^{r-2} \\
	(3) & P^{r-3} & P^{r-3} & P^{r-3} & P^{r-3} & \cdots & P^{r-3} \\
	(4) & P^{r-4} & P^{r-4} & P^{r-4} & P^{r-4} & \cdots & P^{r-4} \\
	\vdots&\vdots&\cdots&\cdots&\cdots&\cdots&\vdots \\
    	(r) & P^{0} & P^{0} & P^{0} & P^{0} & \cdots & P^{0} \\
    	\end{block}
  	\end{blockarray},
\]
\[
G:=\begin{blockarray}{ccccccc}
    	{} & (1) & (2) & (3) & (4) & \cdots & (r) \\
    	\begin{block}{c(cccccc@{\hspace*{5pt}})}
    	(1) & P^{r} & P^{r} & P^{r} & P^{r} & \cdots & P^{r} \\
    	(2) & P^{r-1} & P^{r} & P^{r} & P^{r} & \cdots & P^{r} \\
	(3) & P^{r-2} & P^{r-1} & P^{r} & P^{r} & \cdots & P^{r} \\
	(4) & P^{r-3} & P^{r-2} & P^{r-1} & P^{r} & \cdots & P^{r} \\
	\vdots&\vdots&\cdots&\cdots&\cdots&\cdots&\vdots \\
    	(r) & P^{1} & P^{2} & P^{3} & P^{4} & \cdots & P^{r} \\
    	\end{block}
  	\end{blockarray} .
\]

We can see that 

\begin{align} \label{E6.0.2}\tag{E6.0.2}
&H^T+F= \\
\notag
&\begin{blockarray}{ccccccc}
{} & (1) & (2) & (3) & (4) & \cdots & (r) \\
\begin{block}{c(cccccc@{\hspace*{5pt}})}
(1) & \sum\limits_{i=0}^{1} P^{r-i}  
    &  \sum\limits_{i=0}^{1} P^{r-i} 
		&  \sum\limits_{i=0}^{1} P^{r-i} 
		&  \sum\limits_{i=0}^{1} P^{r-i} 
		& \cdots &  \sum\limits_{i=0}^{1} P^{r-i}\\
(2) &  \sum\limits_{i=1}^{2} P^{r-i} 
    & \sum\limits_{i=0}^{2} P^{r-i} 
		&  \sum\limits_{i=0}^{2} P^{r-i} 
		&  \sum\limits_{i=0}^{2} P^{r-i} 
		& \cdots &  \sum\limits_{i=0}^{2} P^{r-i} \\
(3) &  \sum\limits_{i=2}^{3} P^{r-i} 
    & \sum\limits_{i=1}^{3} P^{r-i} 
		& \sum\limits_{i=0}^{3} P^{r-i} 
		& \sum\limits_{i=0}^{3} P^{r-i} 
		& \cdots & \sum\limits_{i=0}^{3} P^{r-i} \\
(4) &  \sum\limits_{i=3}^{4} P^{r-i} 
    & \sum\limits_{i=2}^{4} P^{r-i} 
		& \sum\limits_{i=1}^{4} P^{r-i} 
		& \sum\limits_{i=0}^{4} P^{r-i} 
		& \cdots & \sum\limits_{i=0}^{4} P^{r-i} \\
\vdots&\vdots&\cdots&\cdots&\cdots&\cdots&\vdots \\
(r) &  \sum\limits_{i=r-1}^{r} P^{r-i} 
    & \sum\limits_{i=r-2}^{r} P^{r-i} 
		& \sum\limits_{i=r-3}^{r} P^{r-i} 
		& \sum\limits_{i=r-4}^{r} P^{r-i} 
		& \cdots & \sum\limits_{i=0}^{r} P^{r-i} \\
\end{block}
\end{blockarray}\\
\notag
&=E+G.
\end{align}

To finish the proof we need a few lemmas.

\begin{lemma} 
\label{xxlem6.1}
Let $\phi$ be a brick set which is a subset of 
$\{1,2,\cdots, r^2\}$. Suppose that $I,J \in 
\phi$ and that the $(I,J)$-entry $E_{IJ}\neq 0$. 
Then $F_{IJ}=E_{IJ}=1$.
\end{lemma}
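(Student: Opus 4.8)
The plan is to read off the claim directly from the matrix identity \eqref{E6.0.2}, rewritten entrywise as
\[
F_{IJ}=E_{IJ}+G_{IJ}-(H^{T})_{IJ}=E_{IJ}+G_{IJ}-H_{JI}.
\]
First I would record two ``$\{0,1\}$'' facts. The entries of $E$ lie in $\{0,1\}$: by \eqref{E2.8.2} every block of $E$ is a sum of at most $r$ consecutive powers of the cyclic permutation matrix $P$, and powers $P^{k}$ for $k$ ranging over $\leq r$ consecutive integers are pairwise disjoint $\{0,1\}$-matrices (different residues mod $r$ have disjoint support), so each such block is again a $\{0,1\}$-matrix. Equivalently, by the Serre duality used in Corollary \ref{xxcor2.8}(2), $E_{IJ}$ equals a $\Hom$-dimension between two bricks, which is at most $1$ by Theorem \ref{xxthm2.7}(2). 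Likewise every entry of $F$ lies in $\{0,1\}$, since each block of $F$ is a single power of $P$, hence a permutation matrix. In particular the hypothesis $E_{IJ}\neq 0$ already forces $E_{IJ}=1$, so it remains only to prove $F_{IJ}=1$.

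Next I would feed in the brick-set hypothesis. Since $I,J\in\phi$ and $\phi$ is a brick set, Definition \ref{xxdef1.3}(3) gives $H_{JI}=\dim\Hom(J,I)=\delta_{IJ}$. Substituting into the entrywise identity yields
\[
F_{IJ}=1+G_{IJ}-\delta_{IJ},
\]
where $G_{IJ}\geq 0$ because $G$ has non-negative entries. The argument then splits into two short cases according to whether $I=J$.

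For $I\neq J$ the conclusion is immediate: $F_{IJ}=1+G_{IJ}\geq 1$, while $F_{IJ}\leq 1$ by the $\{0,1\}$-fact above, forcing $F_{IJ}=1$ (and incidentally $G_{IJ}=0$). The diagonal case $I=J$ carries the only genuine structural input, and I expect it to be the main obstacle: here $F_{II}=G_{II}$, so I must show $G_{II}=1$. Writing the object labeled $I$ as $E_{i}[a]$, so that $I$ sits in row block $(a)$ with $1\leq a\leq r$, the hypothesis $E_{II}=\dim\Ext^{1}(E_{i}[a],E_{i}[a])\neq 0$ forces $a=r$ by Corollary \ref{xxcor2.6}(2c). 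Thus $I$ lies in the last block $(r)$, whose diagonal block $(r,r)$ in $G$ equals $P^{r}=P^{0}$, the $r\times r$ identity matrix; reading off the within-block diagonal entry gives $G_{II}=1$, hence $F_{II}=1$. Combining the two cases proves $F_{IJ}=E_{IJ}=1$.
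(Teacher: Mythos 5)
Your proof is correct and follows essentially the same route as the paper: both rest on the entrywise identity \eqref{E6.0.2}, the brick-set condition $\dim\Hom(X_I,X_J)=\delta_{IJ}$, the $\{0,1\}$-structure of $E$ and $F$, and the split into the cases $I=J$ and $I\neq J$. The only deviation is your diagonal case, where the detour through Corollary \ref{xxcor2.6}(2c) to place $I$ in the last row block is valid but unnecessary, since \emph{every} diagonal block of $G$ equals $P^{r}=P^{0}=I_{r\times r}$, so $G_{II}=1$ holds outright (which is what the paper uses).
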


\begin{proof}
Since $E$ is a matrix 
whose entries consist of zeros and ones, $E_{IJ}=1$. 
We consider two different cases.

\noindent
{\bf Case 1: $I=J$.} $\quad$ By \eqref{E6.0.2},
\begin{align*}
1+F_{IJ}	&=(H^T)_{IJ}+F_{IJ}=E_{IJ}+G_{IJ} =E_{IJ}+1
\end{align*}
which implies that $F_{IJ}=E_{IJ}=1$.

\noindent
{\bf Case 2: $I\not=J$.} $\quad$ Since $I,J$ are 
distinct elements in the same brick set, we have 
$(H^T)_{IJ}=0$. By \eqref{E6.0.2},
\begin{align*}
F_{IJ}	&=(H^T)_{IJ}+F_{IJ}=E_{IJ}+G_{IJ} \geq 1.
\end{align*}
Since $F$ is a matrix whose entries are contained 
in $\{0,1\}$, $F_{IJ}=1.$
\end{proof}
	


\begin{lemma} 
\label{xxlem6.2}
If $\phi$ is a brick set, for each row $I$ 
there exists at most one column $J$ such 
that $A(\phi)_{IJ}\not =0$. If there exists 
a row $J$ such that $A(\phi)_{IJ}\not=0$, 
then $A(\phi)_{IJ}=1$.
\end{lemma}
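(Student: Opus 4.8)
The plan is to view $A(\phi)$ as the principal submatrix of $E$ obtained by restricting to the rows and columns indexed by $\phi$, so that $A(\phi)_{IJ}=E_{IJ}$ for all $I,J\in\phi$. With this identification the second assertion is immediate: every entry of $E$ lies in $\{0,1\}$, so any nonzero $A(\phi)_{IJ}$ equals $1$ (this is also exactly what Lemma \ref{xxlem6.1} records). The real content is the first assertion, that each row of $A(\phi)$ carries at most one nonzero entry, and I would deduce it by combining Lemma \ref{xxlem6.1} with the explicit shape of $F$ and the orthogonality built into the definition of a brick set.

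First I would pin down the support of each row of $F$. Writing an index $I\in\{1,\dots,r^2\}$ in block form as a pair $(s,a)$ — the $a$-th entry of the $s$-th block, that is, the brick $E_{a}[s]$ — the definition of $F$ shows that the $(s)$-th row block equals $P^{r-s}$ in \emph{every} column block. Hence $F_{IJ}=(P^{r-s})_{a,b}$ whenever $J=(t,b)$, and this value does not depend on $t$. Since $P$ is a permutation matrix, row $a$ of $P^{r-s}$ contains a single $1$, located in a column $b$ that depends only on $a$ and $s$ (concretely $b\equiv a+s \pmod r$). Consequently the columns $J$ with $F_{IJ}=1$ are exactly $(1,b),(2,b),\dots,(r,b)$; in terms of the bricks these are $E_{b}[1],E_{b}[2],\dots,E_{b}[r]$, all sharing the first index $b$ while the length runs over $1\le t\le r$.

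Now by Lemma \ref{xxlem6.1} any $J\in\phi$ with $A(\phi)_{IJ}=E_{IJ}\neq 0$ forces $F_{IJ}=1$, so such a $J$ must be one of $E_{b}[1],\dots,E_{b}[r]$. It therefore suffices to show that a brick set contains at most one object of the form $E_{b}[t]$, $1\le t\le r$, for a fixed $b$. Here I would invoke Theorem \ref{xxthm2.7}(2): for $t_1<t_2$ the data $(i,j)=(b,t_1)$ and $(i',j')=(b,t_2)$ satisfy condition (2a), since $b\le b\le b+t_1-1$ and $b+t_1\le b+t_2$, whence $\Hom_{\mathfrak{T}}(E_{b}[t_1],E_{b}[t_2])\cong\Bbbk\neq 0$. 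As distinct members of a brick set are orthogonal, at most one of $E_{b}[1],\dots,E_{b}[r]$ can lie in $\phi$. Thus at most one column $J\in\phi$ yields $A(\phi)_{IJ}\neq 0$, giving the first assertion, and any such entry equals $E_{IJ}=1$.

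The step I expect to be most delicate is the index bookkeeping in the second paragraph: correctly matching the position $b$ of the single $1$ in row $a$ of $P^{r-s}$ with the brick object $E_{b}[t]$ in the chosen ordering of $X_1,\dots,X_{r^2}$. Once the support of each row of $F$ is identified as a single $\Hom$-chain $E_{b}[1]\to\cdots\to E_{b}[r]$, the brick-set orthogonality closes the argument at once; the rest is routine.
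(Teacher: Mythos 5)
Your proof is correct and follows essentially the same route as the paper's: both rest on Lemma \ref{xxlem6.1} forcing $F_{IJ}=1$, on the permutation-block structure of $F$ pinning all admissible columns to a single within-block position $b$, and on the nonvanishing of $\Hom$ between two bricks sharing that position to contradict brick-set orthogonality. The only presentational difference is that the paper argues by contradiction with two columns $J,J'$ (deriving $j=j'$ from a product of powers of $P$ and then reading $H_{JJ'}\geq 1$ off the block form \eqref{E6.0.1}), whereas you describe the support of row $I$ of $F$ as the chain $E_b[1],\dots,E_b[r]$ and quote Theorem \ref{xxthm2.7}(2a) directly --- the same underlying fact, since \eqref{E6.0.1} is itself a restatement of that theorem.
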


\begin{proof} Assume for contradiction that 
we have a brick set $\phi$ containing elements 
$I, J, J'$ such that $J \not=J'$ and 
$E_{IJ}\not=0 \not=E_{IJ'}.$ 
Notice that we have the following general 
formula for $n,m \in \{0, \ldots, r-1 \}, 
i,j \in \{1, \ldots, r\}$:
\begin{equation} 
\notag
F_{nr+i, mr+j}=(P^{r-1-n})_{ij}.
\end{equation}
We can always write 
\begin{equation}\notag
I=nr+i, \quad J=mr+j, \quad J'=m'r+j', 
\end{equation}
for some $i,j,j' \in \{ 1, \ldots, r \}, n,m,m' 
\in \{0, \ldots, r-1\}$.

Assume without loss of generality that $m \geq m'$. 
We will show that
\begin{equation}
\notag
H_{JJ'}=H_{mr+j,m'r+j'} \not =0.
\end{equation}
Therefore, $J,J'$ cannot be in the same brick set, 
a contradiction.

By Lemma \ref{xxlem6.1},
\begin{align*}
&1= F_{IJ} = F_{nr+i,mr+j}=(P^{r-1-n})_{ij}=(P^{n+1-r})_{ji}, \\
&1 = F_{IJ'}=F_{nr+i,m'r+j'}=(P^{r-1-n})_{ij'}.
\end{align*}
Then
\begin{align*}
1 &= (P^{n+1-r})_{ji}(P^{r-1-n})_{ij'} 
= \sum_{k=1}^r (P^{n+1-r})_{jk}(P^{r-1-n})_{kj'} \\
&=(P^{n+1-r}P^{r-1-n})_{jj'} 
=\delta_{jj'} 
\end{align*}
which implies that $j=j'$.

By examination of the $H$ matrix \eqref{E6.0.1}, we can 
see that for $m \geq m'$,
\[
H_{JJ'}=H_{mr+j,m'r+j'}=(P^0)_{jj'}+A_{jj'}
\]
for some non-negative $r \times r$ matrix $A$. Therefore,
\[
H_{JJ'}=H_{mr+j,m'r+j'}=H_{mr+j,m'r+j}
=(P^0)_{jj}+A_{jj}=\delta_{jj}+A_{jj} \geq 1
\]
as required.
\end{proof}


\begin{lemma}\label{xxlem6.3}
If $\phi$ is a brick set, for each column $J$ there 
exists at most one row $I$ such that 
$A(\phi)_{IJ}\not =0$. If there exists a row $I$ 
such that $A(\phi)_{IJ}\not=0$, then $A(\phi)_{IJ}=1$.
\end{lemma}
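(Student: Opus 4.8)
The plan is to argue by contradiction in direct parallel with the proof of Lemma \ref{xxlem6.2}, interchanging the roles of rows and columns. Suppose $\phi$ is a brick set containing three elements $I,I',J$ with $I\neq I'$ and $A(\phi)_{IJ}\neq 0\neq A(\phi)_{I'J}$, so that the column $J$ carries two nonzero entries. Since every entry of $E$ lies in $\{0,1\}$, the second assertion (that $A(\phi)_{IJ}\neq 0$ forces $A(\phi)_{IJ}=1$) is immediate, and it suffices to exhibit a nonzero off-diagonal $\Hom$-entry between the two distinct bricks indexed by $I$ and $I'$, contradicting the defining relation $\dim\Hom(X_a,X_b)=\delta_{ab}$ of a brick set.

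First I would invoke Lemma \ref{xxlem6.1} to turn $E_{IJ}\neq 0\neq E_{I'J}$ into $F_{IJ}=F_{I'J}=1$. Writing $I=nr+i$, $I'=n'r+i'$, $J=mr+j$ and using the formula $F_{nr+i,\,mr+j}=(P^{r-1-n})_{ij}$ from the proof of Lemma \ref{xxlem6.2}, these read $(P^{r-1-n})_{ij}=1$ and $(P^{r-1-n'})_{i'j}=1$. Rewriting the second as $(P^{n'+1-r})_{ji'}=1$ and telescoping the product of permutation matrices exactly as in Lemma \ref{xxlem6.2} (the only nonzero term of $\sum_k(P^{r-1-n})_{ik}(P^{n'+1-r})_{ki'}$ is $k=j$) then yields $(P^{n'-n})_{ii'}=1$, i.e. $i-i'\equiv n'-n \pmod r$.

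The main subtlety, and the point where this proof diverges from Lemma \ref{xxlem6.2}, is that the two $F$-conditions here involve \emph{different} powers $P^{r-1-n}$ and $P^{r-1-n'}$, so the telescoping produces only a congruence rather than the equality $i=i'$ of second indices. I would first dispose of the case $n=n'$: there $(P^{0})_{ii'}=1$ forces $i=i'$, hence $I=I'$, against our hypothesis; so after relabelling we may assume $n>n'$. It then remains to locate the nonzero $\Hom$-entry. Reading off the block of $H$ in \eqref{E6.0.1} at row-block $n'+1$ and column-block $n+1$ --- which now sits in the \emph{upper} block-triangular region, opposite to the case treated in Lemma \ref{xxlem6.2} --- this block equals $\sum_{s=n-n'}^{n}P^{s}$, and its $(i',i)$-entry contains the summand $s=n-n'$. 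Since $n-n'$ is precisely the residue of $i'-i$ modulo $r$ and satisfies $1\le n-n'\le n\le r-1$, it lies in the summation range, so $H_{I'I}\neq 0$, which is the desired contradiction.

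I expect the step requiring the most care to be the bookkeeping of the block-triangular description of $H$ and the verification that the residue $n-n'$ falls inside the range $\{n-n',\dots,n\}$: this is exactly what makes the nonzero entry appear in $H_{I'I}$ rather than in $H_{II'}$ (the latter in fact vanishes, as one checks that its residue $r-(n-n')$ exceeds $n'$). I would also note that this statement is the transpose, or opposite-category, counterpart of Lemma \ref{xxlem6.2}, consistent with the duality remarked upon before Theorem \ref{xxthm2.11}, which offers an alternative route should one prefer to deduce it formally from the row version.
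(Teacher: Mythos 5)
Your proof is correct and is essentially the paper's own: the paper omits the proof of Lemma \ref{xxlem6.3}, stating only that it is ``similar to the proof of Lemma \ref{xxlem6.2}'', and your argument is exactly that adaptation carried out in full. In particular, you correctly handle the one point where the adaptation is not purely mechanical, namely that the two conditions $F_{IJ}=F_{I'J}=1$ now involve different powers of $P$, so the telescoping yields only the congruence $i-i'\equiv n'-n \pmod{r}$ rather than an equality of indices, after which the nonzero entry $H_{I'I}$ must be located in the upper block-triangular region of \eqref{E6.0.1} (opposite to the case in Lemma \ref{xxlem6.2}).
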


\begin{proof} The proof is similar to the proof of Lemma 
\ref{xxlem6.2} and omitted.
%
%
%
%
%
\end{proof}


\begin{lemma}
\label{xxlem6.4}
If $\phi$ is a brick set, $\rho(A(\phi)) \leq 1$.
\end{lemma}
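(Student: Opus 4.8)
The plan is to read off the conclusion directly from Lemmas \ref{xxlem6.2} and \ref{xxlem6.3}, which between them pin down the shape of $A(\phi)$, and then to bound the spectral radius by an elementary operator-norm estimate. First I would recall that, by the conventions of Corollary \ref{xxcor2.8} and the setup of this appendix, the adjacency matrix $A(\phi)$ associated to the endofunctor $\Ext^1$ (equivalently $\Sigma$) is precisely the principal submatrix of the $\Ext^1$-matrix $E$ whose rows and columns are indexed by the elements of the brick set $\phi \subseteq \{1,2,\dots,r^2\}$. As observed in the proof of Lemma \ref{xxlem6.1}, every entry of $E$ lies in $\{0,1\}$ (each block is a sum of distinct powers of the cyclic permutation matrix $P$, whose ones occupy disjoint positions), so $A(\phi)$ is an honest $0$-$1$ matrix and Definition \ref{xxdef1.1} collapses to the ordinary spectral radius.

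Next I would combine the two preceding lemmas. Lemma \ref{xxlem6.2} states that in each row $I$ of $A(\phi)$ there is at most one nonzero entry, and that entry equals $1$; Lemma \ref{xxlem6.3} is the dual statement for columns. In particular each row sum $\sum_{J} A(\phi)_{IJ}$ is at most $1$. I would then invoke the standard fact that the spectral radius of any square matrix is dominated by any of its operator norms; taking the maximum-row-sum norm $\|A(\phi)\|_{\infty}=\max_{I}\sum_{J}|A(\phi)_{IJ}|$ yields
$$\rho(A(\phi)) \leq \|A(\phi)\|_{\infty} \leq 1,$$
which is exactly the assertion.

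At this stage there is essentially no obstacle remaining: all of the genuine work has already been carried out in Lemmas \ref{xxlem6.2} and \ref{xxlem6.3}, which together identify $A(\phi)$ as a partial permutation matrix (a $0$-$1$ matrix with at most one $1$ in each row and each column). If a more structural justification for the final bound is preferred, one can instead note that such a matrix is the incidence matrix of a partial injection on the index set; this injection decomposes into disjoint cycles and chains, so the eigenvalues of $A(\phi)$ are either roots of unity (from the cycles) or $0$ (from the nilpotent chains), each of modulus at most $1$, giving $\rho(A(\phi))\leq 1$ again.
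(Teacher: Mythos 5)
Your proposal is correct, and it rests on the same foundation as the paper's own proof: both arguments take the partial-permutation structure of $A(\phi)$ — at most one nonzero entry in each row and each column, each such entry equal to $1$ — straight from Lemmas \ref{xxlem6.2} and \ref{xxlem6.3}. The only divergence is in the finishing step. The paper regards $A(\phi)$ as the adjacency matrix of a quiver, observes that such a quiver has cycle number at most $1$, and cites \cite[Theorem 1.8]{CG} to conclude $\rho(A(\phi))\leq 1$; you instead use the elementary bound $\rho(A(\phi)) \leq \|A(\phi)\|_{\infty} \leq 1$ by the maximum-row-sum norm (with the cycle/chain decomposition of a partial injection as a structural alternative). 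Your route is more self-contained: it avoids the external citation entirely, and in fact the norm estimate uses only the row statement (Lemma \ref{xxlem6.2}), so the column statement (Lemma \ref{xxlem6.3}) becomes superfluous for this bound — a small economy the paper's quiver-theoretic argument does not exploit, though the paper's phrasing keeps the result aligned with the Frobenius-Perron machinery of \cite{CG} that the rest of the article builds on. One point worth making explicit in your write-up is the observation you already sketch at the start: since every entry of the $\Ext^1$-matrix $E$ lies in $\{0,1\}$, the extended spectral radius of Definition \ref{xxdef1.1} reduces to the classical one, so the norm inequality applies without further comment.
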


\begin{proof}
By Lemmas \ref{xxlem6.2} and \ref{xxlem6.3}, we have 
shown that if $\phi$ is a brick set, $A(\phi)$ is a 
matrix with at most one non-zero entry in each row and 
at most one non-zero entry in each column, such that 
if any entry is non-zero, it is $1$. Then 
$A(\phi)$ is almost a permutation matrix. In this
case the quiver corresponding to $A(\phi)$ has 
cycle number at most 1. By \cite[Theorem 1.8]{CG},
$\rho(A(\phi)) \leq 1$.
\end{proof}

\section{Appendix B: Some variants}
\label{xxsec7}

Recall that the set of subsets of $n$ nonzero objects 
in ${\mathcal C}$ is denoted by $\Phi_n$ for each $n\geq 1$.
And let $\Phi=\bigcup_{n\geq 1} \Phi_n$. In this paper, we
use either $\Phi_b$ or $\Phi_a$ as testing objects
in the definition of fp-invariants [Definition \ref{xxdef1.4}].
Depending on the situation, we might want to choose a testing set different
from $\Phi_b$ or $\Phi_a$. Here is a list of possible alternative 
testing sets.

\begin{example}
\label{xxex7.1}
\begin{enumerate}
\item[(1)]
$\Phi=\bigcup_{n\geq 1} \Phi_n$.
\item[(2)]
If the category ${\mathcal C}$ is abelian, we can consider 
``simple sets'' as follows. Let $\Phi_{n,s}$ be the set of 
$n$-object subsets of ${\mathcal C}$, say 
$\phi:=\{X_1, X_2, \cdots,X_n\}$, where the $X_i$ are non-isomorphic
simple objects in ${\mathcal C}$. 
Let $\Phi_{s}=\bigcup_{n\geq 1} \Phi_{n,s}$. 
\item[(3)]
A subset $\phi=\{X_1, X_2, \cdots,X_n\}$ is called 
{\it a triangular brick set} if each $X_i$ is a brick
object and, up to a permutation, $\Hom_{\mathcal C}(X_i,X_j)=0$
for all $i<j$. Let $\Phi_{n,tb}$ be the set of all triangular 
brick $n$-sets, and let $\Phi_{tb}=\bigcup_{n\geq 1} \Phi_{n,tb}$.
\item[(4)]
Now assume that ${\mathcal C}$ is a triangulated category with 
suspension functor $\Sigma$.
A subset $\phi=\{X_1, X_2, \cdots,X_n\}$ is called 
{\it a triangular atomic set} if each $X_i$ is an atomic
object and, up to a permutation, $\Hom_{\mathcal C}(X_i,X_j)=0$
for all $i<j$. Let $\Phi_{n,ta}$ be the set of all triangular 
atomic $n$-sets, and let $\Phi_{ta}=\bigcup_{n\geq 1} \Phi_{n,ta}$.
\end{enumerate}
\end{example}

Basically, for any property $P$, we can define $\Phi_{n,Pb}$
(respectively, $\Phi_{n,Pa}$) and let $\Phi_{Pb}$ 
(respectively, $\Phi_{Pa}$) be $\bigcup_{n\geq 1} 
\Phi_{n,Pb}$ (respectively, $\bigcup_{n\geq 1} \Phi_{n,Pa}$).
All of the definitions in this paper and in \cite{CG} can be modified
after we redefine $\rho(A(\phi,\sigma))$ as follows. 

\begin{definition}
\label{xxdef7.2} Let ${\mathcal C}$ be a $\Bbbk$-linear category 
and let $\phi$ be a set of $n$ nonzero objects, say
$\{X_1,\cdots,X_n\}$, in ${\mathcal C}$. Let $\sigma$ be a 
$\Bbbk$-linear endofunctor of ${\mathcal C}$. We define
$$\rho(A(\phi,\sigma)):=\frac
{\rho\left( (\dim \Hom_{\mathcal C}(X_i,\sigma(X_j)))_{n\times n}
\right)}
{\rho\left( (\dim \Hom_{\mathcal C}(X_i,X_j))_{n\times n}\right)}.
$$
\end{definition}

Note that $\rho(A(\phi,\sigma))$ agrees with the original definition
when $\phi$ is a brick set. One reason to introduce $P$-versions of 
fp-invariants is to extend these invariants even if the category 
contains no brick objects.

\subsection*{Acknowledgments}

The authors thank the referee for his/her very careful reading
and valuable comments and thank Professor Jarod Alper for many 
useful conversations on the subject. J. Chen was partially 
supported by the National Natural Science Foundation of China 
(Grant Nos. 11971398 and 12131018) and the Fundamental Research 
Funds for Central Universities of China (Grant No. 20720180002). 
Z. Gao was partially supported by the National Natural Science 
Foundation of China (Grant No. 61971365). E. Wicks and J.J. Zhang 
were partially supported by the US National Science Foundation 
(Grant Nos. DMS-1402863, DMS-1700825 and DMS-2001015). X.-H. Zhang 
was partially supported by the National Natural Science Foundation 
of China (Grant No. 11401328). H. Zhu was partially supported by a 
grant from Jiangsu overseas Research and Training Program for 
university prominent young and middle-aged Teachers and Presidents, 
China.



\end{document}